\tikzset{my loop/.style =  {to path={
  \pgfextra{}
  [looseness=12,min distance=10mm]
  \tikz@to@curve@path},font=\sffamily\small
  }}  
\theoremstyle{plain}
\newtheorem{theorem}{Theorem}[section]
\newtheorem{lemma}[theorem]{Lemma}
\newtheorem{corollary}[theorem]{Corollary}
\newtheorem{proposition}[theorem]{Proposition}
\theoremstyle{definition}
\newtheorem{remark}[theorem]{Remark}
\newtheorem{example}[theorem]{Example}
\newtheorem{definition}[theorem]{Definition}
\newcommand{\bK}{\mathbf{K}}
\newcommand{\cB}{\mathbb{B}}
\DeclareMathOperator{\SL}{SL}
\DeclareMathOperator{\Spec}{Spec}
\DeclareMathOperator{\Pic}{Pic}
\DeclareMathOperator{\NE}{NE}
\DeclareMathOperator{\codim}{codim}
\DeclareMathOperator{\tor}{tor}
\DeclareMathOperator{\GIT}{GIT}
\DeclareMathOperator{\Ext}{Ext}
\DeclareMathOperator{\bb}{\mathrm{bb}}
\DeclareMathOperator{\In}{\mathrm{In}}
\title[]{
Geometric interpretation of toroidal compactifications of moduli of points in the line and cubic surfaces
}
\author{Patricio Gallardo}
\address{
{\small Department of Mathematics,
900 University Ave.
Riverside, CA 92521
Skye Hall}
}
\email{pgallard@ucr.edu}
\author{Matt Kerr}
\address{{\small Department of Mathematics and Statistics
Washington University in St. Louis
Campus Box 1146
One Brookings Drive}}
\email{matkerr@wustl.edu}
\author{Luca Schaffler}
\address{{\small Department of Mathematics \& Statistics, University of Massachusetts Amherst, Amherst, MA 01003, USA}}
\email{schaffler@math.umass.edu}
\keywords{moduli space, compactification, pointed line, cubic surface, Hodge theory, stable pair}
\subjclass[2010]{14J10, 14D06, 14E05}
\begin{document}

\begin{abstract}
It is known that some GIT compactifications associated to moduli spaces of either points in the projective line or cubic surfaces are isomorphic to Baily-Borel compactifications of appropriate ball quotients. In this paper, we show that their respective toroidal compactifications are isomorphic to moduli spaces of stable pairs as defined in the context of the MMP. Moreover, we give a precise mixed-Hodge-theoretic interpretation of this isomorphism for the case of eight labeled points in the projective line.
\end{abstract}

\bibliographystyle{alpha}
\maketitle


\section{Introduction}

Understanding the interplay between geometric and Hodge-theoretic compactifications of a given moduli space is one of the main challenges in algebraic geometry. This type of problem was investigated for moduli spaces of different types of algebraic varieties: for instance, abelian varieties \cite{Ale02}, cubic threefolds \cite{CMGHL15}, and very recently degree $2$ K3 surfaces \cite{AET19}. In the current paper we describe in detail such interplay for certain moduli spaces of points in $\mathbb{P}^1$ studied by Deligne and Mostow, and the moduli space of cubic surfaces. More specifically, we expand Deligne-Mostow's results by showing that the toroidal compactifications of their ball quotients are isomorphic to appropriate Hassett's moduli spaces of weighted stable rational curves. We pursue this isomorphism in detail for the case of eight labeled points in $\mathbb{P}^1$ by explicitly describing the Hodge theoretic boundary components. In another direction, as a consequence of our results, it follows that the KSBA compactification in \cite{MS18} of a certain family of K3 surfaces arising from eight points in the projective line is isomorphic to a toroidal compactification up to a finite group action. For cubic surfaces, we prove that Naruki's compactification is toroidal, and that it has a modular interpretation in terms of Koll\'ar-Shepherd--Barron-Alexeev stable pairs. Some of these results were expected (see \cite[\S1]{ACT02}, \cite[Remark 1.3 (4)]{HKT09}, and \cite[Appendix C]{CMGHL19}). However, their proofs were not pursued so far.
 

\subsection{Toroidal compactifications of Deligne-Mostow ball quotients}\label{s1.1}
In \cite{DM86}, Deligne and Mostow described certain moduli spaces of 
$n$ labeled points in $\mathbb{P}^1$ such that the
GIT compactification 
$
(\mathbb{P}^1)^n/ \! /_{\mathbf{w}} \SL_2
$
with respect to a specific linearization
$\mathbf{w}=(w_1, \ldots, w_n)$
is isomorphic, after possibly quotienting by the action on the labels of an appropriate symmetric group $S_m$, to the Baily-Borel compactification $\overline{\Gamma_{\mathbf{w}}\backslash\mathbb{B}_{n-3}}^{\bb}$ of a ball quotient, where $\Gamma_{\mathbf{w}}$ is arithmetic.
The general theory developed in \cite{AMRT75} provides us with an alternative compactification of the moduli space of $n$ points in $\mathbb{P}^1$: the unique toroidal compactification $\overline{\Gamma_{\mathbf{w}}\backslash\mathbb{B}_{n-3}}^{\tor}$, which is a blow up of the Baily-Borel compactification 
$\overline{\Gamma_{\mathbf{w}}\backslash\mathbb{B}_{n-3}}^{\bb}$ at the cusps. This toroidal compactification has divisorial boundary, with each boundary component being the quotient of a CM-abelian variety by a (possibly trivial) finite group. In general, toroidal compactifications have milder singularities, but they lack a geometrically modular interpretation.

On the other hand, a geometric compactification of the moduli space of $n$ points in $\mathbb{P}^1$ mapping birationally onto $(\mathbb{P}^1)^n/ \! /_{\mathbf{w}} \SL_2$ is provided by the Hassett weighted moduli space $\overline{\mathrm{M}}_{0,\mathbf{w}+\epsilon}$ of stable $n$-pointed rational curves with weights
$\mathbf{w}+\epsilon:=(w_1+\epsilon, \ldots, w_n+\epsilon)$ \cite{Has03}. It is very natural to ask how $\overline{\mathrm{M}}_{0,\mathbf{w}+\epsilon}$ and $\overline{\Gamma_{\mathbf{w}}\backslash\mathbb{B}_{n-3}}^{\tor}$ are related, which leads to our main result.

\begin{theorem}
\label{maintheorem}
Let $\mathbf{w}$ be a set of $n$ rational weights and $m$ a nonnegative integer for which we have the Deligne-Mostow isomorphism 
$$
\overline{\Gamma_{\mathbf{w}}\backslash\mathbb{B}_{n-3}}^{\bb}
\cong
(\mathbb{P}^1)^n/ \! /_{\mathbf{w}}\SL_2\rtimes S_m.
$$ 
\textup{(}For a complete list of these cases see Tables  ~\ref{table:DMcasesEisenstein} and 
\ref{table:DMcasesGaussian}; all satisfy $\sum w_i =2$.\textup{)} Then the toroidal compactification of 
$\overline{\Gamma_{\mathbf{w}}\backslash\mathbb{B}_{n-3}}^{\tor}$ is isomorphic to the quotient by $S_m$ of the Hassett moduli space of $n$-pointed rational curves with weights $\mathbf{w}+\epsilon$. In particular, we have the following commutative diagram:
\begin{center}
\begin{tikzpicture}[>=angle 90]
\matrix(a)[matrix of math nodes,
row sep=2em, column sep=2em,
text height=1.5ex, text depth=0.25ex]
{
\overline{\mathrm{M}}_{0, \mathbf{w + \epsilon}}/S_m
&
\overline{\Gamma_{\mathbf{w}}\backslash\mathbb{B}_{n-3}}^{\tor}
\\
(\mathbb{P}^1)^n/ \! /_{\mathbf{w}}\SL_2\rtimes S_m
&
\overline{\Gamma_{\mathbf{w}}\backslash\mathbb{B}_{n-3}}^{\bb},\\};
\path[->] (a-1-1) edge node[above]
{ $\overline{\Phi}_{\mathbf{w}}$ }(a-1-2); 
\path[->] (a-1-1) edge node[left]{}(a-2-1);
\path[->] (a-2-1) edge node[above]{}(a-2-2);
\path[->] (a-1-2) edge node[right]{$\varphi$}(a-2-2);
\end{tikzpicture}
\end{center}
where the horizontal arrows are isomorphisms. 
\end{theorem}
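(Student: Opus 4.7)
The plan is to extend the Deligne-Mostow isomorphism $\Phi_{\mathbf{w}}:\mathrm{M}_{0,n}/S_m\xrightarrow{\sim}\Gamma_{\mathbf{w}}\backslash\mathbb{B}_{n-3}$ from the open interior to a regular birational morphism $\overline{\Phi}_{\mathbf{w}}$ between the two chosen compactifications, and then to show this extension is an isomorphism. Both $\overline{\mathrm{M}}_{0,\mathbf{w}+\epsilon}/S_m$ and $\overline{\Gamma_{\mathbf{w}}\backslash\mathbb{B}_{n-3}}^{\tor}$ are normal and projective, and each admits a birational contraction to the common base $(\mathbb{P}^1)^n//_{\mathbf{w}}\SL_2\rtimes S_m\cong\overline{\Gamma_{\mathbf{w}}\backslash\mathbb{B}_{n-3}}^{\bb}$, namely the Hassett reduction on the left and the map $\varphi$ on the right. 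Once $\overline{\Phi}_{\mathbf{w}}$ is constructed as a regular morphism compatible with these two contractions, the isomorphism statement reduces, via normality and Zariski's main theorem, to bijectivity on closed points.

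For the extension, I would apply the extension theorem for period maps into toroidal compactifications of arithmetic quotients of bounded symmetric domains (Borel on the Baily-Borel side, and Ash--Mumford--Rapoport--Tai in the toroidal refinement). The boundary divisors of $\overline{\mathrm{M}}_{0,\mathbf{w}+\epsilon}$ are labeled by subsets $I\subset\{1,\ldots,n\}$ with $\sum_{i\in I}(w_i+\epsilon)>1$, and precisely those with $\sum_{i\in I}w_i=1$ are contracted to cusps of the Baily-Borel compactification. Around such a boundary divisor the local monodromy of the Deligne-Mostow variation of Hodge structure is an explicit unipotent element in the rank-one unipotent radical stabilizing the corresponding cusp. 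Since the cusps of a ball quotient all have complex rank one, their toroidal cones are canonical, and these local monodromies automatically lie inside the fan. This yields the desired regular extension $\overline{\Phi}_{\mathbf{w}}$.

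For bijectivity I would work fiber by fiber over $\overline{\Gamma_{\mathbf{w}}\backslash\mathbb{B}_{n-3}}^{\bb}$. Surjectivity is immediate from properness and dominance. For injectivity, at a Baily-Borel cusp associated to a partition $\{1,\ldots,n\}=I_1\sqcup I_2$ with $\sum_{i\in I_j}w_i=1$, the fiber of the Hassett contraction parametrizes the pair of bubble curves supported on $I_1$ and $I_2$, while the fiber of $\varphi$ is a finite quotient of a CM abelian variety; both are irreducible of dimension $n-4$, and the restriction of $\overline{\Phi}_{\mathbf{w}}$ between them is birational. The main obstacle is to identify the bubble moduli with this finite quotient of the CM abelian variety, matching the periods of the limiting cyclic covers of $\mathbb{P}^1$ at a DM cusp with the toric coordinates of the toroidal boundary. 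I expect this identification to be carried out case by case using the classification in Tables \ref{table:DMcasesEisenstein} and \ref{table:DMcasesGaussian}, with the eight-point case providing the paradigmatic example in which the CM abelian variety appears as a product of elliptic curves quotiented by an involution.
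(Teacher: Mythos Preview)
Your overall shape---extend the period map, then argue bijectivity over the Baily--Borel base---is reasonable, but it diverges from the paper's proof in a way that leaves your hardest step exactly where the paper manages to avoid one.

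The paper does not treat all Deligne--Mostow cases uniformly.  It first proves the theorem for the two \emph{ancestral} cases ($n=8$, $\mathbf{w}=(\tfrac14)^8$ and $n=12$, $\mathbf{w}=(\tfrac16)^{12}$), and then deduces every remaining case by embedding $\overline{\mathrm{M}}_{0,\mathbf{w}+\epsilon}/S_m$ and $\overline{\Gamma_{\mathbf{w}}\backslash\mathbb{B}_{n-3}}^{\tor}$ as compatible finite covers of subvarieties of the ancestral ones (using Doran's sub-ball-quotient result together with a normality/finite-morphism lemma of Yu--Zheng).  For the ancestral cases themselves, the extension is obtained not by applying AMRT directly to $\overline{\mathrm{M}}_{0,\mathbf{w}+\epsilon}$, but by first lifting to the simple-normal-crossing compactification $\overline{\mathrm{M}}_{0,n}$, extending there, and then gluing with the obvious map on the preimage of $\Gamma\backslash\mathbb{B}$.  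This matters: the extension criteria used (their Lemma~\ref{extensiontotoroidalwithnccompactificationandfreeactionandhyperplanearrangement}) require an SNC boundary on the source, and the monodromy along the collision divisors is \emph{not} unipotent (it has a nontrivial semisimple part of order two, cf.\ \S\ref{sec:Matt2}), contrary to what you assert.

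More importantly, the paper never carries out the fiberwise identification you flag as the ``main obstacle.''  Instead, once the rational map is shown to be an isomorphism in codimension~$\geq 2$, the argument concludes by exhibiting relatively ample line bundles on each side over $\overline{\Gamma\backslash\mathbb{B}}^{\bb}$---namely minus the exceptional divisors, shown to be relatively ample via a cone/Fano computation on the Hassett side and an adjunction/abelian-variety argument on the toroidal side---and then invoking a Kov\'acs-type lemma (their Lemma~\ref{kovacs}) that upgrades a codimension-$2$ isomorphism matching relatively ample classes to a global one.  Your proposed case-by-case matching of bubble moduli with quotients of CM abelian varieties is precisely the content of \S\ref{sec:Matt} for $n=8$, which the paper presents as a Hodge-theoretic \emph{interpretation} of the already-proved isomorphism, not as its proof.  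As written, your proposal leaves that step as an expectation rather than an argument, so the proof is incomplete; and even in the $n=8$ case the boundary component is $\mathbb{P}^2\times\mathbb{P}^2$, not a product of elliptic curves modulo an involution.
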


The main idea  for the  proof of Theorem~\ref{maintheorem}, see Section \ref{proofofthemaintheorem}, is to first prove the result for two special cases called the ancestral ones. Then we extend our result to the remaining Deligne-Mostow cases using work of Doran \cite{Dor04a}, which guarantees that the remaining ball quotients can be viewed as sub-ball quotients of the two ancestral cases.  Note that the common denominator $d$ of $\mathbf{w}$ is $3$, $4$, or $6$ in all cases, and when $d=4$ we always have $m=1$ (that is, $S_m$ is trivial).

The isomorphisms in Theorem \ref{maintheorem} compactify period maps $\Phi_{\mathbf{w}}\colon \mathrm{M}_{0,n}/S_m \hookrightarrow \Gamma_{\mathbf{w}}\backslash \mathbb{B}_{n-3}$ associated with certain weight-1 variations of Hodge structure $\widetilde{\mathcal{V}}_{\mathbf{w}}$ on $\mathrm{M}_{0,n}/S_m$ with monodromy group $\Gamma_{\mathbf{w}}$ and Hodge numbers $(n-2,n-2)$.  Namely, the fiber of $\widetilde{\mathcal{V}}_{\mathbf{w}}$ over (the $S_m$-orbit of)  $\mathbf{x}=(x_1,\ldots,x_n)\in \mathrm{M}_{0,n}$ is the subspace in $H^1$ of the curve $C_{\mathbf{w},\mathbf{x}}:=\{Y^d=\Pi_{j=1}^n (X-x_j Z)^{dw_j}\}\subset\mathbb{WP}[1:1:2]$ on which the automorphism $\rho^*$ defined by $Y\mapsto e^{\frac{2\pi i}{d}}Y$ acts through $e^{\pm\frac{2\pi i}{d}}$, cf. \cite[Thm. 8.4ff]{DK07}.  The injectivity of $\Phi_{\mathbf{w}}$ is thus transformed into a global Torelli theorem, and one wonders to what extent $\overline{\Phi}_{\mathbf{w}}$ underlies an \emph{extended} global Torelli theorem matching geometric and Hodge-theoretic moduli.

In Section \ref{sec:Matt}, we work this out for the case $\mathbf{w}=\mathbf{\tfrac{1}{4}}=(\tfrac{1}{4},\ldots,\tfrac{1}{4})$ ($n=8$, $m=1$), where the VHS has fibers $\widetilde{\mathcal{V}}_{\mathbf{\frac{1}{4}},\mathbf{x}}=H^1(C_{\mathbf{w},\mathbf{x}})^{-\rho^2}$.  To wit, we provide a mixed-Hodge-theoretic interpretation of the restriction of $\overline{\Phi}=\overline{\Phi}_{\mathbf{\frac{1}{4}}}$ to the exceptional divisor of $\varphi$, along which (generically) $C_{\mathbf{\frac{1}{4}},\mathbf{x}}$ degenerates to a pair of genus-3 curves.  That is, each component of this divisor has a Zariski open $\mathcal{S}$ parametrizing curves $\{D_{\nu}=D_{\nu}^{(1)}\cup D_{\nu}^{(2)}\}_{\nu\in\mathcal{S}}$ on which $\rho$ acts, with four fixed points on each $D_{\nu}^{(j)}$, and $D_{\nu}^{(1)}\cap D_{\nu}^{(2)}=\{q_{\ell}=\rho^{\ell}(q_0)\}_{\ell=0}^3$.  Write $N$ for the monodromy logarithm of $\widetilde{\mathcal{V}}_{\mathbf{\frac{1}{4}}}$ along $\mathcal{S}$, and $\mathcal{R}$ for the (finite) monodromy group of $H^1(D_{\nu}^{(j)},\mathbb{Z})^{-\rho^2}$ on $\mathcal{S}$.  We show in Propositions \ref{prop:Matt3} and \ref{prop:Matt4} that $\overline{\Phi}|_{\mathcal{S}}$ records the limiting mixed Hodge structure of $\widetilde{\mathcal{V}}$ at $\nu$ in the Hodge-theoretic boundary component $\Gamma_N\backslash B(N)\cong \times_{j=1}^2 \{\mathcal{R}\backslash (\Omega^1(D^{(j)}_{\nu})^{-\rho^2})^{\vee}/(1-\rho)H_1(D_{\nu}^{(j)},\mathbb{Z})\}\cong \mathbb{P}^2 \times\mathbb{P}^2$ (which is independent of $\nu\in\mathcal{S}$ despite appearances).  This leads to the following result, proved in Section \ref{sec:Matt5}:
\begin{theorem}\label{thm:Matt}
Given $\nu\in\mathcal{S}$, let $\sigma_+^{(j)}$ be any path on $D_{\nu}^{(j)}$ ($j=1,2$) from a fixed point of $\rho$ to a point of $D_{\nu}^{(1)}\cap D_{\nu}^{(2)}$, and $\sigma^{(j)}:=\sigma_+^{(j)}-\rho^2\sigma_+^{(j)}$.  Then the functional $(\int_{\sigma^{(1)}},\int_{\sigma^{(2)}})\in \oplus_{j=1}^2 (\Omega^1(D_{\nu}^{(j)})^{-\rho^2})^{\vee}$ becomes well-defined in $\Gamma_N\backslash B(N)$ and computes $\overline{\Phi}(\nu)$.
\end{theorem}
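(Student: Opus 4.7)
The plan is to combine Propositions \ref{prop:Matt3} and \ref{prop:Matt4} with Schmid's nilpotent orbit theorem and Carlson's extension-class formalism: once the LMHS at $\nu$ is identified with (a truncation of) the MHS on $H^1(D_\nu)^{-\rho^2}$, and the boundary component $\Gamma_N\backslash B(N)$ is identified with $\prod_{j=1}^2 \mathcal{R}\backslash(\Omega^1(D_\nu^{(j)})^{-\rho^2})^\vee/(1-\rho)H_1(D_\nu^{(j)};\mathbb{Z})$, it remains to show that the concrete functional $(\int_{\sigma^{(1)}},\int_{\sigma^{(2)}})$ represents the Carlson class under these identifications.

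First I would verify that $(\int_{\sigma^{(1)}},\int_{\sigma^{(2)}})$ descends to a well-defined point of $\Gamma_N\backslash B(N)$. Replacing the $\rho$-fixed endpoint $p$ of $\sigma_+^{(j)}$ by another $\rho$-fixed point $p'$ modifies $\sigma_+^{(j)}$ by a path $\tau$ with $\partial\tau=p'-p$; since $\rho^2$ fixes both endpoints, $(1-\rho^2)\tau$ is closed, and the factorization $1-\rho^2=(1-\rho)(1+\rho)$ puts its class in $(1-\rho)H_1(D_\nu^{(j)};\mathbb{Z})$. The same computation handles ambiguity in the interior of the path (a closed-loop correction to $\sigma_+^{(j)}$). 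Replacing the node $q_0$ by $q_k=\rho^k q_0$ via the canonical path $\rho^k\sigma_+^{(j)}$ transforms $\sigma^{(j)}$ into $\rho^k\sigma^{(j)}$, which acts on $\int_{\sigma^{(j)}}$ by $\rho^k\in\mathcal{R}$; any non-canonical choice differs by a closed loop handled as above. This establishes well-definedness.

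Next I would compare this point with $\overline{\Phi}(\nu)$. Choosing a holomorphic disk $\Delta$ transverse to $\mathcal{S}$ at $\nu$ with coordinate $t$, Schmid's nilpotent orbit theorem gives $\Phi(t)=\exp(\tfrac{\log t}{2\pi i}N)\tilde F(t)$ with $\tilde F$ holomorphic at $t=0$, so that $\overline{\Phi}(\nu)$ is represented by $F_{\lim}:=\tilde F(0)$ modulo the $\exp(\mathbb{C}N)$-orbit. By Carlson's theory this class is precisely the extension class of the LMHS $W_1V_{\lim}^{-\rho^2}$ as an extension of $(\mathrm{Gr}^W_1)^{-\rho^2}=\oplus_j H^1(D_\nu^{(j)})^{-\rho^2}$ by $(W_0)^{-\rho^2}$; by Clemens--Schmid this is precisely the truncated MHS on $H^1(D_\nu)^{-\rho^2}$ identified in Proposition \ref{prop:Matt3}.

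The main technical step is a chain-level computation of Carlson's class. Using the normalization exact sequence for $D_\nu$, an $F$-preserving splitting of $H^1(D_\nu)^{-\rho^2}\twoheadrightarrow\oplus_j H^1(D_\nu^{(j)})^{-\rho^2}$ is provided by the identification of $F^1$ with holomorphic differentials on the normalization (extending regularly across the nodes), while an integral splitting comes from lifting cocycles from $\tilde D_\nu$ to $D_\nu$. Dualizing, the discrepancy is computed by pairing with a relative $1$-chain on $D_\nu^{(j)}$ whose boundary lies on the singular locus; the $(-\rho^2)$-eigenspace projection forces the antisymmetric form $\sigma^{(j)}=(1-\rho^2)\sigma_+^{(j)}$, and the choice of a $\rho$-fixed point as the other endpoint makes the construction compatible with the $\mathcal{R}$-action. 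The hard part will be tracing the Steenbrink-type limit of $F^1$ across the family and verifying that the $F$-preserving splitting, dually, evaluates precisely to $\int_{\sigma^{(j)}}$; this requires a careful analysis of holomorphic $1$-forms on the nearby smooth fibers as they degenerate to $D_\nu$ near the nodes, and it is here that the specific shape of $\sigma_+^{(j)}$---running from a $\rho$-fixed point to a node---is essential.
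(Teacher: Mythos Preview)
Your approach is essentially the paper's: both use Clemens--Schmid to identify the LMHS with the MHS on $H_1(D_\nu)^-$ (or dually $H^1$), then compute the extension class by Abel--Jacobi/Carlson. The chief difference is organizational, and it bears directly on what you flag as the ``hard part.'' The paper splits the computation into two pieces: (i) the bare LMHS modulo reparametrization, whose extension class is the Abel--Jacobi image of the $\rho^2$-anti-invariant $0$-cycle $[q_2]-[q_0]$ supported on $\mathrm{sing}(D_\nu)$ --- this lands only in a \emph{coarse} quotient $\overline{J}(D_\nu^{(j)})^-$ (periods taken over the image of all of $H_1(D_\nu^{(j)},\mathbb{Z})$, not the sublattice $(1-\rho)H_1$); and (ii) a level-structure refinement, coming from $\Gamma=U(h;(1-i))\subsetneq U(h;\mathbb{Z}[i])$, which corresponds geometrically to the branch points $\xi_k^{(j)}$ being \emph{ordered}. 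It is precisely this ordering that singles out the $\rho$-fixed starting point of $\sigma_+^{(j)}$ and refines the target to $\tilde{J}(D_\nu^{(j)})^-$. The payoff of this organization is that once Clemens--Schmid hands you the MHS of the singular fiber, the Abel--Jacobi computation takes place entirely on $D_\nu$: no tracing of Steenbrink limits of $F^1$, no analysis of holomorphic forms on nearby smooth fibers. The analytic step you anticipate as hard is simply bypassed.

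One small slip in your well-definedness argument: for the change of $\rho$-fixed endpoint, the factorization $(1-\rho^2)\tau=(1-\rho)\bigl[(1+\rho)\tau\bigr]$ fails because $(1+\rho)\tau$ has boundary $2(p'-p)\neq 0$. Write instead $(1-\rho^2)\tau=(1+\rho)\bigl[(1-\rho)\tau\bigr]$; now $(1-\rho)\tau$ \emph{is} closed since both endpoints are $\rho$-fixed, and then one observes that $(1+\rho)H_1$ and $(1-\rho)H_1$ have the same image in $(\Omega^1(D_\nu^{(j)})^-)^\vee$, since $\rho$ acts there by multiplication by $i$ and $(1+i)=i(1-i)$ in $\mathbb{Z}[i]$.
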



\subsection{Moduli of K3 surfaces from eight points in the projective line}
 The above ball quotients can also be identified with specific moduli spaces of surfaces (see \cite{DK07,Dor04b,DvGK05,Kon07,Moo18}). For instance, if $\mathbf{w}=\mathbf{\frac{1}{4}}$ as above, an open subset of the ball quotient $\Gamma_{\mathbf{\frac{1}{4}}} \backslash \cB_5$ parametrizes the K3 surfaces with order-four, purely non-symplectic automorphism and $U(2)\oplus D_4^{\oplus2}$ lattice polarization. These surfaces were studied in \cite{Kon07} from the point of view of automorphic forms, and they arise as the minimal resolution of the double cover $X\rightarrow\mathbb{P}^1\times\mathbb{P}^1$ branched along a specific curve of class $(4,4)$ which depends on the choice of eight points in $\mathbb{P}^1$. More precisely, if  
$[\lambda_1:1],\ldots,[\lambda_8:1]$ are the eight distinct points, then the equation of the branch curve is in the following form:
\[
y_0y_1 \left( y_0^2 \prod_{i=1}^4(x_0-\lambda_ix_1) + y_1^2 \prod_{i=5}^8(x_0-\lambda_ix_1)\right)=0.
\]
The involution of $\mathbb{P}^1\times\mathbb{P}^1$ given in an affine patch by $(x,y)\mapsto(x,-y)$ lifts to the K3 surface giving the order 4 purely non-symplectic automorphism. A compactification $\overline{\mathbf{K}}$ of such a family by KSBA stable pairs (see \cite{KSB88,Ale96,Kol18}) was studied in \cite{MS18}, where it is shown $\overline{\mathbf{K}}$ is isomorphic to the quotient of $\overline{\mathrm{M}}_{0,\mathbf{\frac{1}{4}}+\epsilon}$ by a finite group. Therefore, as an immediate consequence of Theorem~\ref{maintheorem}, we have the following result.

\begin{corollary}
\label{cor:KSBAToroK3}
Let $\overline{\bK}$ be the KSBA compactification  of the moduli space of K3 surfaces with a purely non-symplectic automorphism of order four and $U(2)\oplus D_4^{\oplus2}$ lattice polarization. Then, $\overline{\bK}$ is isomorphic to the quotient of the toroidal compactification $\overline{
\Gamma_{\mathbf{\frac{1}{4}}}
\backslash
\mathbb{B}_5}^{\tor}$ by $(S_4\times S_4)\rtimes S_2$.
\end{corollary}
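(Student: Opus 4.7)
The plan is to deduce the corollary directly by combining Theorem~\ref{maintheorem} with the result of \cite{MS18} quoted just above its statement. First, I would specialize Theorem~\ref{maintheorem} to $\mathbf{w}=\mathbf{\frac{1}{4}}$ and $n=8$. The excerpt notes that the common denominator $d=4$ forces $m=1$, so $S_m$ is trivial and Theorem~\ref{maintheorem} reduces to the clean isomorphism
$$
\overline{\mathrm{M}}_{0,\mathbf{\frac{1}{4}}+\epsilon}\;\cong\;\overline{\Gamma_{\mathbf{\frac{1}{4}}}\backslash\mathbb{B}_5}^{\tor}.
$$

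Next, I would invoke \cite{MS18}, which identifies $\overline{\bK}$ with the quotient of $\overline{\mathrm{M}}_{0,\mathbf{\frac{1}{4}}+\epsilon}$ by some finite group $F$, and identify $F$ with $(S_4\times S_4)\rtimes S_2$ by reading off the manifest symmetries of the branch curve equation
$$
y_0y_1\Bigl(y_0^2\prod_{i=1}^{4}(x_0-\lambda_i x_1)+y_1^2\prod_{i=5}^{8}(x_0-\lambda_i x_1)\Bigr)=0.
$$
Each $S_4$ factor permutes one of the quadruples $\{\lambda_1,\ldots,\lambda_4\}$ or $\{\lambda_5,\ldots,\lambda_8\}$, while the $S_2$ factor swaps the two quadruples via the involution $[y_0:y_1]\mapsto[y_1:y_0]$ on the second $\mathbb{P}^1$, which interchanges the two summands. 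Two $8$-tuples lying in the same $F$-orbit produce isomorphic K3 surfaces in the family, and by \cite{MS18} these are the only identifications.

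Finally, I would transport the $F$-action across the isomorphism of step one and pass to quotients. The group $F\subset S_8$ acts on $\overline{\mathrm{M}}_{0,\mathbf{\frac{1}{4}}+\epsilon}$ by relabeling marked points; since the weights $\mathbf{\frac{1}{4}}$ are permutation-invariant, the Deligne-Mostow setup is $S_8$-equivariant at the Baily-Borel level, and by functoriality of the toroidal construction of \cite{AMRT75} with respect to automorphisms of the underlying ball quotient, this equivariance lifts to $\overline{\Gamma_{\mathbf{\frac{1}{4}}}\backslash\mathbb{B}_5}^{\tor}$. Consequently the isomorphism of step one descends to an isomorphism of $F$-quotients, giving the corollary.

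The only substantive point is the equivariance verification in the last step; the remaining work is bookkeeping, since both the Deligne-Mostow identification and the construction of the toroidal compactification are functorial in $S_8$. I do not anticipate any serious obstacle beyond carefully matching the two $F$-actions, as no further geometric input beyond Theorem~\ref{maintheorem} and \cite{MS18} is required.
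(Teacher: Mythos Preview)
Your proposal is correct and follows essentially the same approach as the paper, which simply declares the corollary ``an immediate consequence of Theorem~\ref{maintheorem}'' combined with the cited result from \cite{MS18} that $\overline{\mathbf{K}}\cong\overline{\mathrm{M}}_{0,\mathbf{\frac{1}{4}}+\epsilon}/((S_4\times S_4)\rtimes S_2)$. You spell out more than the paper does---in particular the identification of the finite group from the branch-curve symmetries and the $S_8$-equivariance of the isomorphism $\overline{\Phi}_{\mathbf{\frac{1}{4}}}$---but these are exactly the implicit steps the paper is taking for granted, and your justification via functoriality of the Deligne--Mostow period map and the toroidal construction is the right one.
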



\subsection{Compactifications of moduli of cubic surfaces}
Next, we discuss our work on cubic surfaces. Let $\mathbf{Y}$ be the moduli space parametrizing marked smooth cubic surfaces in \cite[\S6.3]{HKT09} (we recall the definition of marking in \S\ref{backgroundmoduliofmarkedcubicsurfacesandcompactifications}). This moduli space can be compactified using different techniques. A smooth normal crossing compactification $\mathbf{Y}\subseteq\overline{\mathbf{N}}$ was constructed by Naruki \cite{Nar82} using cross-ratios of tritangents to the cubic surfaces. Another perspective comes from GIT: an appropriate $W(E_6)$ cover of the GIT quotient of cubic surfaces (for details see Definition~\ref{GITcompndefandfacts}) provides a compactification $\mathbf{Y}\subseteq\overline{\mathbf{Y}}_{\GIT}$, which is related to Naruki's compactification by a birational morphism $\overline{\mathbf{N}}\rightarrow\overline{\mathbf{Y}}_{\GIT}$. By \cite[Theorem 3.17]{ACT02} we also know that there is an isomorphism between $\overline{\mathbf{Y}}_{\GIT}$ and the Baily-Borel compactification $\overline{\Gamma_c\backslash\cB_4}^{\bb}$ of an appropriate ball quotient (see also \cite{DvGK05}). In \cite{ACT02}, Allcock, Carlson, and Toledo assert the following result without pursuing its proof (see the introduction in \cite{ACT02}). To complete the picture, in Section~\ref{Narukicompactificationistoroidal} we give a proof of this expected isomorphism.

\begin{theorem}
\label{thm:maincubics}
The Naruki compactification $\overline{\mathbf{N}}$ of the moduli space $\mathbf{Y}$ of marked cubic surfaces is isomorphic to the toroidal compactification $\overline{\Gamma_c\backslash\cB_4}^{\tor}$ of its ball quotient. In particular, we have the following commutative diagram:
\begin{center}
\begin{tikzpicture}[>=angle 90]
\matrix(a)[matrix of math nodes,
row sep=2em, column sep=2em,
text height=2.0ex, text depth=0.25ex]
{\overline{\mathbf{N}}&
\overline{\Gamma_c\backslash\cB_4}^{\tor}
\\
\overline{\mathbf{Y}}_{\GIT}&
\overline{\Gamma_c\backslash\cB_4}^{\bb}
.\\};
\path[->] (a-1-1) edge node[above]{$\cong$}(a-1-2);
\path[->] (a-1-1) edge node[]{}(a-2-1);
\path[->] (a-2-1) edge node[above]{$\cong$}(a-2-2);
\path[->] (a-1-2) edge node[]{}(a-2-2);
\end{tikzpicture}
\end{center}
\end{theorem}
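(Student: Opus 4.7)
The plan is to extend the ACT period isomorphism $\mathbf{Y}\xrightarrow{\sim}\Gamma_c\backslash\cB_4$ to a birational morphism $\Psi\colon\overline{\mathbf{N}}\to\overline{\Gamma_c\backslash\cB_4}^{\tor}$ and then verify it is an isomorphism via Zariski's main theorem. To construct $\Psi$, I would appeal to Borel's extension theorem. This applies because Naruki's $\overline{\mathbf{N}}$ is smooth with simple normal crossings boundary, which is a key output of Naruki's construction via tritangent cross-ratios. The theorem requires verifying that the local monodromy of the period local system around each boundary divisor of $\overline{\mathbf{N}}$ is unipotent (possibly after a finite cover). This can be checked component-by-component: along the $A_1$ divisor $D_A$ (one-nodal cubics) the monodromy on the Hodge-theoretically relevant eigenspace is trivial, whereas along the Cayley divisor $D_C$ (corresponding to the GIT-strictly-semistable Cayley cubic) it is a nontrivial unipotent element of the Heisenberg cusp stabilizer in $\mathrm{PU}(4,1)$. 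Note that the cusps of $\overline{\Gamma_c\backslash\cB_4}^{\bb}$ are $0$-dimensional, so the rational polyhedral cones are $1$-dimensional and the toroidal fan is canonical; no fan choices enter the construction.

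Having produced $\Psi$, one observes that it is proper (since $\overline{\mathbf{N}}$ is projective) and birational (since it extends an isomorphism on the dense open $\mathbf{Y}$). It also fits into a commutative square with the birational contractions to $\overline{\mathbf{Y}}_{\GIT}\cong\overline{\Gamma_c\backslash\cB_4}^{\bb}$. Both source and target of $\Psi$ are normal, so by Zariski's main theorem it suffices to show that $\Psi$ is quasi-finite; equivalently, that no boundary component is contracted and that the map is bijective on boundary components. The divisor $D_A$ is sent to a Heegner divisor lying inside $\Gamma_c\backslash\cB_4\subset\overline{\Gamma_c\backslash\cB_4}^{\tor}$, and the bijectivity there follows straightforwardly from the ACT dictionary between nodal cubics and mirrors of root vectors. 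The substantial work is the identification of the Cayley divisor $D_C$ with the toroidal boundary divisor above the cusps.

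This last identification is the main obstacle. The toroidal boundary over each $0$-dimensional cusp is a finite quotient of a complex abelian $3$-fold built from the Heisenberg cusp stabilizer, whereas the components of $D_C$ in Naruki's picture arise from blowups governed by cross-ratio coordinates at Cayley's $4$-nodal cubic. To reconcile the two descriptions, I would enumerate the $W(E_6)$-orbit of cusps of $\overline{\mathbf{Y}}_{\GIT}$ and the $W(E_6)$-orbit of components of $D_C$, checking that the combinatorial counts agree. Then, on each matched pair, I would compute the limit mixed Hodge structure of the period local system along a generic one-parameter degeneration to Cayley's cubic: the graded pieces of its weight filtration should encode both the abelian quotient and its finite stabilizer action explicitly, which can then be compared with the local equations for $D_C$ in $\overline{\mathbf{N}}$. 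This local matching of the geometric and Hodge-theoretic models at each cusp is where the bulk of the technical work lies, and is what upgrades the boundary bijectivity to an isomorphism of compactifications.
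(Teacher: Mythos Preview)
Your overall strategy---extend the period map, then argue finiteness---is in the same spirit as the paper, but both steps are executed differently and your plan carries more technical weight than is needed.

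For the extension step, you invoke Borel's theorem; but the classical Borel extension only targets the Baily--Borel compactification, and lifting to the toroidal side requires a further argument. The paper packages this as Lemma~\ref{extensiontotoroidalwithnccompactificationandfreeactionandhyperplanearrangement}: one passes to a neat subgroup $\Gamma_0\trianglelefteq\Gamma_c$, so that the toroidal boundary consists of genuine abelian varieties, and then uses Hacon--McKernan's result (Lemma~\ref{lemmaRationalCurves}) that the indeterminacy locus of a map from a dlt pair is swept out by rational curves, together with the fact that abelian varieties contain none. This avoids any monodromy computation along boundary components. (Incidentally, the strictly semistable cubic in the GIT quotient is the $3A_2$ surface, not the $4A_1$ Cayley cubic, which is GIT-stable; your terminology for $D_C$ is off.)

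For the isomorphism step, the paper again takes a shorter route than your proposed LMHS matching. Having the morphism $\overline{\mathbf{N}}\to\overline{\Gamma_c\backslash\cB_4}^{\tor}$ in hand, Lemma~\ref{extensionOpen} gives an isomorphism away from codimension~$\geq 2$. The remaining work is to observe that the negatives of the exceptional divisors are relatively ample on both sides of the square over $\overline{\Gamma_c\backslash\cB_4}^{\bb}$: on the toroidal side this is Proposition~\ref{relativeamplenesstoroidalside} (adjunction along abelian-variety boundary), and on the Naruki side it is Proposition~\ref{pre-relativeamplenesshassettside} applied to the cone-over-$(\mathbb{P}^1)^3$ description of the GIT singularities at the cusps. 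Then Kov\'acs's Lemma~\ref{kovacs} promotes the codimension-$2$ isomorphism to a global one. This circumvents both the orbit-count and the limit-MHS comparison you describe as the ``bulk of the technical work.'' Your Zariski-main-theorem approach could be made to work (since $\overline{\mathbf{N}}$ is smooth, hence $\mathbb{Q}$-factorial, ``no divisor contracted'' does imply quasi-finite), but the relative-ampleness argument is both shorter and cleaner.
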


Another compactification of $\mathbf{Y}$ can be constructed using KSBA stable pairs. The marking of a smooth cubic surface $S$ induces a labeling of its $(-1)$-curves, and we denote by $B$ the divisor on $S$ given by their sum (note that $(S,B)$ is a stable pair if $B$ has normal crossings). In \cite{HKT09}, Hacking, Keel, and Tevelev studied the KSBA compactification of the moduli space parametrizing such stable pairs $(S,B)$. The authors also showed that Naruki's compactification $\overline{\mathbf{N}}$ is isomorphic to the log canonical model of $\mathbf{Y}$, and they believed that $\overline{\mathbf{N}}$ should parametrize stable pairs $\left(S,\left(\frac{1}{9}+\epsilon\right)B\right)$ and their degenerations, where $\epsilon\in\mathbb{Q}$, $0<\epsilon\ll1$. In the next theorem we confirm this belief. As a result, we give a modular interpretation of the toroidal compactification $\overline{\Gamma_c\backslash\mathbb{B}_4}^{\tor}$.

\begin{theorem}
\label{thm:NarukiModular}
The Naruki compactification $\overline{\mathbf{N}}$ is isomorphic to the normalization of the KSBA compactification $\overline{\mathbf{Y}}_{\frac{1}{9}+\epsilon}$ of the moduli space parametrizing marked smooth cubic surfaces with divisor given by the sum of the $27$ lines with weight $\frac{1}{9}+\epsilon$ and their degenerations. The stable pairs parametrized by the boundary of $\overline{\mathbf{N}}$ are in the form $\left(S_0,\left(\frac{1}{9}+\epsilon\right)B_0\right)$, where $S_0\subseteq\mathbb{P}^3$ is a singular cubic surface and $B_0$ is the sum of the lines in $S_0$, which we further describe as follows (see the corresponding picture in Table~\ref{ksbastabledegenerations27linesweight1/9+e}):
\begin{itemize}

\item $S_0$ has exactly one $A_1$ singularity and $B_0$ has six double lines passing through this singularity. The remaining $15$ lines have multiplicity one.

\item $S_0$ has exactly two $A_1$ singularities. $B_0$ has one quadruple line passing through the two singular points, four double lines passing through one $A_1$ singularity, and other four double lines passing through the other $A_1$ singularity. The remaining seven lines have multiplicity one.

\item $S_0$ has exactly three $A_1$ singularities. $B_0$ has three quadruple lines, each one containing a pair of $A_1$ singularities, and three pairs of double lines, where each pair passes through one of the three singular points. The remaining three lines have multiplicity one.

\item $S_0$ has exactly four $A_1$ singularities (this is known as the Cayley cubic surface). $B_0$ has six quadruple lines, each one containing a pair of $A_1$ singularities. The remaining three lines have multiplicity one.

\end{itemize}
In the remaining cases, $S_0$ is equal to the normal crossing union of three planes in $\mathbb{P}^3$, each one containing exactly nine of the lines of $B_0$, possibly with multiplicities. For the precise multiplicities and the incidences of the lines we refer to the pictures in the second column of Table~\ref{ksbastabledegenerations27linesweight1/9+e}.
\end{theorem}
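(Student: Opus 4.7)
The plan is to construct a family of KSBA stable pairs over $\overline{\mathbf{N}}$, appeal to the universal property of $\overline{\mathbf{Y}}_{\frac{1}{9}+\epsilon}$ to obtain a morphism $f\colon\overline{\mathbf{N}}\to\overline{\mathbf{Y}}_{\frac{1}{9}+\epsilon}$, and argue that $f$ realizes $\overline{\mathbf{N}}$ as the normalization. On a smooth cubic $S$, the sum $B$ of the $27$ lines satisfies $B\sim -9K_S$ (checked on intersection numbers: $B\cdot(-K_S)=27=(-9K_S)\cdot(-K_S)$ and $B^2=243=(-9K_S)^2$, with $\mathrm{Pic}(S)$ torsion-free), so $K_S+(\tfrac{1}{9}+\epsilon)B\equiv -9\epsilon K_S$ is ample for every $\epsilon>0$. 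Since the $27$ lines form the classical Schl\"afli simple-normal-crossing configuration, $(S,(\tfrac{1}{9}+\epsilon)B)$ is a KSBA stable pair, yielding an open immersion $\mathbf{Y}\hookrightarrow\overline{\mathbf{Y}}_{\frac{1}{9}+\epsilon}$.

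Next I would extend this across the two types of boundary strata of $\overline{\mathbf{N}}$. For the strata lying over cubics with $n$ nodes ($n=1,2,3,4$) in $\overline{\mathbf{Y}}_{\GIT}$, I would take a one-parameter family $\{S_t\}$ of smooth cubics specializing to $S_0$, follow the $27$ marked lines to the limit, and check that the resulting multiplicities agree with those of the first four bullets of the theorem (e.g.\ $6\cdot 2+15=27$ for $n=1$). The relation $B_0\sim -9K_{S_0}$ persists on these canonical cubics, so ampleness of $K_{S_0}+(\tfrac{1}{9}+\epsilon)B_0$ follows as in the smooth case, while the tabulated multiplicities of lines through each $A_1$ keep log discrepancies nonnegative for $0<\epsilon\ll 1$. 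For the $40$ toroidal boundary divisors over the $3A_2$ cusp, however, the $3A_2$ cubic itself cannot be the KSBA limit: the log canonical threshold at an $A_2$ point rules out weight $\tfrac{1}{9}+\epsilon$ in the expected way. To locate the true limit I would take a semistable log resolution $(\mathcal{S},\mathcal{B})\to\Delta$ of a smoothing, run the log MMP for $K_{\mathcal{S}}+(\tfrac{1}{9}+\epsilon)\mathcal{B}$ relative to $\Delta$, and use as a guide the classical flat degeneration of a smooth cubic to the three-plane union $\{xyz=0\}\subset\mathbb{P}^3$, realised for instance by $xyz+tf(x,y,z,w)=0$. The $27$ marked lines then distribute across the three planes with multiplicities dictated by the combinatorics of the smoothing, and the resulting normal-crossing three-plane pairs should match the configurations in the second column of Table~\ref{ksbastabledegenerations27linesweight1/9+e}, one type per toroidal stratum modulo $W(E_6)$-symmetry.

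Assembling the above yields the required family of KSBA stable pairs over $\overline{\mathbf{N}}$ and hence the morphism $f$. By construction $f$ is birational, and is set-theoretically bijective onto its image because a stable pair together with its marking determines its point in $\overline{\mathbf{N}}$: on the $nA_1$ strata this is essentially Theorem~\ref{thm:maincubics}, while on the $40$ Naruki divisors it follows from the explicit combinatorial description in the previous paragraph. Since $\overline{\mathbf{N}}$ is normal (being toroidal), $f$ factors through $\overline{\mathbf{Y}}^{\,\mathrm{nor}}_{\frac{1}{9}+\epsilon}$, giving a proper birational bijection onto a normal target, which must be an isomorphism.

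The main obstacle is the $3A_2$ step: verifying that the weighted log MMP applied to a smoothing of a $3A_2$ cubic really produces the three-plane central fiber with the precise pattern of quadruple, double and simple lines listed in Table~\ref{ksbastabledegenerations27linesweight1/9+e}, and that the resulting combinatorial types are in bijection with the toroidal boundary divisors of $\overline{\mathbf{N}}$ up to $W(E_6)$. Keeping track of how the $27$ marked lines redistribute across the three planes throughout the MMP is where the real work lies and accounts for essentially all of the new geometric content of the theorem.
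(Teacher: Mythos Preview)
Your overall architecture---build a family of stable pairs over $\overline{\mathbf{N}}$, obtain $f\colon\overline{\mathbf{N}}\to\overline{\mathbf{Y}}_{\frac{1}{9}+\epsilon}$, then use normality and Zariski's Main Theorem---matches the paper. The substantive divergence is in how the family is produced over the $40$ type $N$ divisors.

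You propose to take a smoothing of a $3A_2$ cubic, run the relative log MMP with weight $\tfrac{1}{9}+\epsilon$, and hope to identify the resulting central fiber as the union of three planes with the correct line pattern. The paper bypasses this entirely: Naruki and Sekiguchi \cite{NS80} already constructed an explicit family of cubic surfaces $\overline{\mathcal{S}}\to\overline{\mathbf{N}}$ (a single cubic equation in parameters $(\lambda,\mu,\nu,\rho)$, Definition~\ref{def:FamilyCubic}) whose fiber over every type $N$ divisor is literally $\{x_0x_1x_2=0\}$. The $27$ lines are then computed as limits of intersections of pairs of tritangent planes as $\rho\to 0$ (Lemma~\ref{limit27}, Table~\ref{tab:27lines}), and the configurations in Table~\ref{ksbastabledegenerations27linesweight1/9+e} are read off by direct inspection rather than by tracking an MMP. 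Stability is then checked by elementary local log-canonicity computations (Lemmas~\ref{logcanonicityconcurrentlinesthroughA1} and \ref{logcanonicityconcurrentlines}). Your route would presumably reach the same answer, but you correctly flag the MMP step as the real obstacle; the paper's point is precisely that the Naruki--Sekiguchi family makes it unnecessary.

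Two further points. First, your bijectivity argument on the type $N$ divisors is circular as written: you appeal to ``the explicit combinatorial description in the previous paragraph,'' but that description is exactly what the MMP step was supposed to produce. The paper argues only \emph{finiteness} of $f$: no curve in a type $N$ divisor $(\mathbb{P}^1)^3$ is contracted because a cross-ratio of four explicit limit lines on one of the three planes moves along it, and no curve in an $A_1$-type divisor is contracted because the underlying cubic surfaces are already non-isomorphic in $\overline{\mathbf{Y}}_{\GIT}$. Finite plus birational with normal source then yields the normalization. Second, the paper separately checks flatness of the divisor $\overline{\mathcal{B}}\to\overline{\mathbf{N}}$ via a Hilbert-polynomial computation (Macaulay2) at the deepest strata $A_1^4$ and $(A_1^3,N)$; this is needed to know that the family really induces a map to the moduli space, and is a step your outline does not address.
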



\begin{table}[ht!]
\centering
\caption{
Pairs parametrized by Naruki's compactification --
see Theorem~\ref{thm:NarukiModular}. Continuous lines have multiplicity $1$, dashed lines have multiplicity $2$, and dotted lines have multiplicity $4$. In the pictures on the left we do not draw lines with multiplicity $1$ intentionally.
}
\label{ksbastabledegenerations27linesweight1/9+e}
\renewcommand{\arraystretch}{1.4}
\begin{tabular}{|>{\centering\arraybackslash}m{7.8cm}|>{\centering\arraybackslash}m{7.8cm}|}
\hline
$A_1$
&
$N$
\\
\hline
\begin{tikzpicture}[scale=0.5]

	\draw[dashed,line width=.5pt] (0,0) -- (-1.4,.5);
	\draw[dashed,line width=.5pt] (0,0) -- (-1,1);
	\draw[dashed,line width=.5pt] (0,0) -- (-.7,1.5);
	\draw[dashed,line width=.5pt] (0,0) -- (1.4,.5);
	\draw[dashed,line width=.5pt] (0,0) -- (1,1);
	\draw[dashed,line width=.5pt] (0,0) -- (.7,1.5);

	\fill (0,0) circle (5pt);

	\node at (0,-0.7) {$A_1$};

\end{tikzpicture}
&
\begin{tikzpicture}[scale=0.5]

	\draw[line width=1pt] (0,0) -- (0,4);
	\draw[line width=1pt] (0,0) -- (-4,-3);
	\draw[line width=1pt] (0,0) -- (4,-3);

	\draw[line width=.5pt] (0,1) -- (1,-3/4);
	\draw[line width=.5pt] (0,1) -- (2,-3/2);
	\draw[line width=.5pt] (0,1) -- (3,-9/4);
	\draw[line width=.5pt] (0,2) -- (1,-3/4);
	\draw[line width=.5pt] (0,2) -- (2,-3/2);
	\draw[line width=.5pt] (0,2) -- (3,-9/4);
	\draw[line width=.5pt] (0,3) -- (1,-3/4);
	\draw[line width=.5pt] (0,3) -- (2,-3/2);
	\draw[line width=.5pt] (0,3) -- (3,-9/4);

	\draw[line width=.5pt] (0,1) -- (-1,-3/4);
	\draw[line width=.5pt] (0,1) -- (-2,-3/2);
	\draw[line width=.5pt] (0,1) -- (-3,-9/4);
	\draw[line width=.5pt] (0,2) -- (-1,-3/4);
	\draw[line width=.5pt] (0,2) -- (-2,-3/2);
	\draw[line width=.5pt] (0,2) -- (-3,-9/4);
	\draw[line width=.5pt] (0,3) -- (-1,-3/4);
	\draw[line width=.5pt] (0,3) -- (-2,-3/2);
	\draw[line width=.5pt] (0,3) -- (-3,-9/4);

	\draw[line width=.5pt] (-1,-3/4) -- (1,-3/4);
	\draw[line width=.5pt] (-1,-3/4) -- (2,-3/2);
	\draw[line width=.5pt] (-1,-3/4) -- (3,-9/4);
	\draw[line width=.5pt] (-2,-3/2) -- (1,-3/4);
	\draw[line width=.5pt] (-2,-3/2) -- (2,-3/2);
	\draw[line width=.5pt] (-2,-3/2) -- (3,-9/4);
	\draw[line width=.5pt] (-3,-9/4) -- (1,-3/4);
	\draw[line width=.5pt] (-3,-9/4) -- (2,-3/2);
	\draw[line width=.5pt] (-3,-9/4) -- (3,-9/4);

\end{tikzpicture}
\\
\hline
\hline
$A_1^2$
&
$(A_1,N)$
\\
\hline
\begin{tikzpicture}[scale=0.5]

	\draw[dotted,line width=.5pt] (-3,0) -- (3,0);

	\draw[dashed,line width=.5pt] (0+1.5,0) -- (-1+1.5,1);
	\draw[dashed,line width=.5pt] (0+1.5,0) -- (-.7+1.5,1.5);
	\draw[dashed,line width=.5pt] (0+1.5,0) -- (1+1.5,1);
	\draw[dashed,line width=.5pt] (0+1.5,0) -- (.7+1.5,1.5);

	\draw[dashed,line width=.5pt] (0-1.5,0) -- (-1-1.5,1);
	\draw[dashed,line width=.5pt] (0-1.5,0) -- (-.7-1.5,1.5);
	\draw[dashed,line width=.5pt] (0-1.5,0) -- (1-1.5,1);
	\draw[dashed,line width=.5pt] (0-1.5,0) -- (.7-1.5,1.5);

	\fill (1.5,0) circle (5pt);
	\fill (-1.5,0) circle (5pt);

	\node at (1.5,-0.7) {$A_1$};
	\node at (-1.5,-0.7) {$A_1$};

\end{tikzpicture}
&
\begin{tikzpicture}[scale=0.5]

	\draw[line width=1pt] (0,0) -- (0,4);
	\draw[line width=1pt] (0,0) -- (-4,-3);
	\draw[line width=1pt] (0,0) -- (4,-3);

	\draw[line width=.5pt] (0,1) -- (1,-3/4);
	\draw[line width=.5pt] (0,1) -- (2,-3/2);
	\draw[line width=.5pt] (0,1) -- (3,-9/4);
	\draw[line width=.5pt] (0,2) -- (1,-3/4);
	\draw[line width=.5pt] (0,2) -- (2,-3/2);
	\draw[line width=.5pt] (0,2) -- (3,-9/4);
	\draw[line width=.5pt] (0,3) -- (1,-3/4);
	\draw[line width=.5pt] (0,3) -- (2,-3/2);
	\draw[line width=.5pt] (0,3) -- (3,-9/4);

	\draw[line width=.5pt] (0,1) -- (-1,-3/4);
	\draw[dashed,line width=.5pt] (0,1) -- (-3,-9/4);
	\draw[line width=.5pt] (0,2) -- (-1,-3/4);
	\draw[dashed,line width=.5pt] (0,2) -- (-3,-9/4);
	\draw[line width=.5pt] (0,3) -- (-1,-3/4);
	\draw[dashed,line width=.5pt] (0,3) -- (-3,-9/4);

	\draw[line width=.5pt] (-1,-3/4) -- (1,-3/4);
	\draw[line width=.5pt] (-1,-3/4) -- (2,-3/2);
	\draw[line width=.5pt] (-1,-3/4) -- (3,-9/4);
	\draw[dashed,line width=.5pt] (-3,-9/4) -- (1,-3/4);
	\draw[dashed,line width=.5pt] (-3,-9/4) -- (2,-3/2);
	\draw[dashed,line width=.5pt] (-3,-9/4) -- (3,-9/4);

\end{tikzpicture}
\\
\hline
\hline
$A_1^3$
&
$(A_1^2,N)$
\\
\hline
\begin{tikzpicture}[scale=0.5]

	\draw[dotted,line width=.5pt] (0,4) -- (-2,0);
	\draw[dotted,line width=.5pt] (0,4) -- (2,0);
	\draw[dotted,line width=.5pt] (-2,0) -- (2,0);

	\draw[dashed,line width=.5pt] (0,4) -- (-.3,2.5);
	\draw[dashed,line width=.5pt] (0,4) -- (.3,2.5);

	\draw[dashed,line width=.5pt] (-2,0) -- (-.7,.5);
	\draw[dashed,line width=.5pt] (-2,0) -- (-1,1);

	\draw[dashed,line width=.5pt] (2,0) -- (.7,.5);
	\draw[dashed,line width=.5pt] (2,0) -- (1,1);

	\fill (-2,0) circle (5pt);
	\fill (2,0) circle (5pt);
	\fill (0,4) circle (5pt);

	\node at (-2.7,0) {$A_1$};
	\node at (2.7,0) {$A_1$};
	\node at (0,4.5) {$A_1$};

\end{tikzpicture}
&
\begin{tikzpicture}[scale=0.5]

	\draw[line width=1pt] (0,0) -- (0,4);
	\draw[line width=1pt] (0,0) -- (-4,-3);
	\draw[line width=1pt] (0,0) -- (4,-3);

	\draw[line width=.5pt] (0,1) -- (1,-3/4);
	\draw[dashed,line width=.5pt] (0,1) -- (3,-9/4);
	\draw[line width=.5pt] (0,2) -- (1,-3/4);
	\draw[dashed,line width=.5pt] (0,2) -- (3,-9/4);
	\draw[line width=.5pt] (0,3) -- (1,-3/4);
	\draw[dashed,line width=.5pt] (0,3) -- (3,-9/4);

	\draw[line width=.5pt] (0,1) -- (-1,-3/4);
	\draw[dashed,line width=.5pt] (0,1) -- (-3,-9/4);
	\draw[line width=.5pt] (0,2) -- (-1,-3/4);
	\draw[dashed,line width=.5pt] (0,2) -- (-3,-9/4);
	\draw[line width=.5pt] (0,3) -- (-1,-3/4);
	\draw[dashed,line width=.5pt] (0,3) -- (-3,-9/4);

	\draw[line width=.5pt] (-1,-3/4) -- (1,-3/4);
	\draw[dashed,line width=.5pt] (-1,-3/4) -- (3,-9/4);
	\draw[dashed,line width=.5pt] (-3,-9/4) -- (1,-3/4);
	\draw[dotted,line width=.5pt] (-3,-9/4) -- (3,-9/4);

\end{tikzpicture}
\\
\hline
\hline
$A_1^4$
&
$(A_1^3,N)$
\\
\hline
\begin{tikzpicture}[scale=0.5]

	\draw[dotted,line width=0.5 pt] (-3,0) -- (3,0);
	\draw[dotted,line width=0.5 pt] (-3,3) -- (3,3);	
	\draw[dotted,line width=0.5 pt] (-2,-1) -- (-2,4);
	\draw[dotted,line width=0.5 pt] (2,-1) -- (2,4);	
	\draw[dotted,line width=0.5 pt] (-2,0) -- (2,3);	
	\draw[dotted,line width=0.5 pt] (2,0) -- (-2,3);	
	
	\fill (-2,0) circle (5pt);
	\fill (2,0) circle (5pt);
	\fill (-2,3) circle (5pt);
	\fill (2,3) circle (5pt);

	\node at (-2.7,-0.5) {$A_1$};
	\node at (2.7,-0.5) {$A_1$};
	\node at (2.7,3.5) {$A_1$};
	\node at (-2.7,3.5) {$A_1$};

\end{tikzpicture}
&
\begin{tikzpicture}[scale=0.5]

	\draw[line width=.5pt] (-1,-3/4) -- (0,1);
	\draw[line width=1pt] (0,0) -- (0,4);
	\draw[line width=1pt] (0,0) -- (-4,-3);
	\draw[line width=1pt] (0,0) -- (4,-3);

	\draw[line width=.5pt] (1,-3/4) -- (0,1);
	\draw[dashed,line width=.5pt] (0,1) -- (3,-9/4);
	\draw[dashed,line width=.5pt] (0,3) -- (1,-3/4);
	\draw[dotted,line width=.5pt] (0,3) -- (3,-9/4);

	\draw[dashed,line width=.5pt] (0,1) -- (-3,-9/4);
	\draw[dashed,line width=.5pt] (0,3) -- (-1,-3/4);
	\draw[dotted,line width=.5pt] (0,3) -- (-3,-9/4);

	\draw[line width=.5pt] (-1,-3/4) -- (1,-3/4);
	\draw[dashed,line width=.5pt] (-1,-3/4) -- (3,-9/4);
	\draw[dashed,line width=.5pt] (-3,-9/4) -- (1,-3/4);
	\draw[dotted,line width=.5pt] (-3,-9/4) -- (3,-9/4);

\end{tikzpicture}
\\
\hline
\end{tabular}
\end{table}


The main idea for the proof is to use the family over $\overline{\mathbf{N}}$ constructed by Naruki and Sekiguchi
in \cite{NS80} and endow it with the divisor intersecting the fibers of the family giving the $27$ lines and their degenerations. The main part of the argument is checking that the degenerations obtained this way are stable. This gives a morphism $\overline{\mathbf{N}}\rightarrow\overline{\mathbf{Y}}_{\frac{1}{9}+\epsilon}$, which we check is finite. The result then follows from Zariski's Main Theorem.



\subsection*{Acknowledgements}

We would like to thank Jeff Achter, Sebastian Casalaina-Martin, Eduardo Cattani, and Paul Hacking for insightful discussions, explanations, and for helping us correct imprecise statements in a preliminary draft of the paper. We also thank Han-Bom Moon for his feedback on the first draft of the paper. We are grateful for the working environments at the Department of Mathematics in Washington University in St. Louis and the University of Massachusetts Amherst where this research was conducted.


\section{Geometric interpretation of the toroidal compactifications of Deligne-Mostow ball quotients}
\label{proofofthemaintheorem}

In \S\ref{sec:Prelim} and \S\ref{sectiononhassettsmodulispaces} we recall the necessary background for the Deligne-Mostow ball quotients and Hassett's moduli spaces of weighted stable rational curves, which are necessary for the proof of Theorem~\ref{maintheorem}. The first step of the proof is carried out in \S\ref{prooftheoremintwoancestralcases}, where we show that our theorem holds for the so called \emph{ancestral cases} associated to $8$ and $12$ points in $\mathbb{P}^1$. Finally, using work of Doran (\cite[Theorem 4]{DDH18}), in \S\ref{sec:OtherDMCases} we show that these two ancestral cases imply all the others.


\subsection{Preliminaries on Deligne-Mostow ball quotients}
\label{sec:Prelim}

We start by reviewing the main result in \cite{DM86}, which we state following the exposition in \cite[\S8]{DK07} and \cite[\S2]{KLW87}. Let $n\geq5$ be an integer and let $\mathbf{w}:=(w_1,\ldots,w_n)$ be rational weights such that $1>w_{i}\geq w_{i+1}>0$ for all $i=1,\ldots,n-1$ and $w_1 + \ldots + w_{n} = 2$. Assume the weights satisfy the following condition:
\begin{enumerate}
\item[(a)] for any $i\neq j$ such that $w_i+w_j<1$, $(1-w_i-w_j)^{-1}\in\mathbb{Z}$.
\end{enumerate}
(This condition is called INT in \cite{DM86}.) Then there exists an arithmetic group $\Gamma_{\mathbf{w}}$ acting on a $(n-3)$-dimensional complex ball $\mathbb{B}_{n-3}$ such that $((\mathbb{P}^1)^n)^s/\!/_{\mathbf{w}}\SL_2$ is isomorphic to 
$
\Gamma_{\mathbf{w}} \backslash
\mathbb{B}_{n-3}$. Moreover, this isomorphism extends to the respective GIT and Baily-Borel compactifications. That is,
\[
(\mathbb{P}^1)^n/\!/_{\mathbf{w}}\SL_2\cong
\overline{\Gamma_{\mathbf{w}}\backslash\mathbb{B}_{n-3}}^{\bb}.
\]
There are then finitely many possibilities for this: $n$ can be $5,6,7$, or $8$, and the possible weights are the ones in \cite[Tables I and II]{KLW87} corresponding to rows where the entry in the column $\Sigma$ is empty (notice a typo: for $n=8$, the case $1^8$ should have empty entry under $\Sigma$). See also \cite[Appendix]{Thu98}.

The hypothesis (a) above can be relaxed still having an isomorphism after quotienting by the actions on some of the labels of the symmetric group $S_m$ for some $m\leq n$. More precisely, assume (a) is replaced by the following other condition: for any $i\neq j$ such that $w_i+w_j<1$,
\begin{enumerate}
\item[(b1)] if $w_i\neq w_j$, then $(1-w_i-w_j)^{-1}\in\mathbb{Z}$;
\item[(b2)] if $w_i=w_j$, then $2(1-w_i-w_j)^{-1}\in\mathbb{Z}$.
\end{enumerate}
(This condition is what is called $\Sigma$INT in 
\cite{Mos86}.) Then there exists an appropriate arithmetic group $\Gamma_{\mathbf{w}}$ acting on $\mathbb{B}_{n-3}$ and a positive integer $m\leq n$ such that
$((\mathbb{P}^1)^n)^s/\!/_{\mathbf{w}}\SL_2\rtimes S_m\cong
\Gamma_{\mathbf{w}}\backslash\mathbb{B}_{n-3}$. Moreover, this isomorphism extends to the compactifications
\begin{align*}
(\mathbb{P}^1)^n/\!/_{\mathbf{w}}\SL_2\rtimes S_m\cong
\overline{\Gamma_{\mathbf{w}}\backslash\mathbb{B}_{n-3}}^{\bb}.
\end{align*}
In this more general setting, $n$ is allowed to be also $9,10,11$, or $12$. The possible weights are the remaining ones in \cite[Tables I and II]{KLW87} (again, see also \cite[Appendix]{Thu98}).

\begin{remark}
\label{rmk:Table}
Tables~\ref{table:DMcasesEisenstein} and \ref{table:DMcasesGaussian} contain all the Deligne-Mostow cases that satisfy the hypotheses of Theorem~\ref{maintheorem}. These are obtained from \cite[Attached figure]{Dor04a}. The weights $\mathbf{w}$ corresponding to the marked points in $\mathbb{P}^1$ are normalized so that they add up to two. For simplicity we adopt the following convention: exponents denote how many times the corresponding weight appears. For instance,
\[
\left( \frac{1}{2} \right)
\left( \frac{1}{3} \right)^4
\left( \frac{1}{6} \right)
:=
\left( 
\frac{1}{2}, \frac{1}{3}, \frac{1}{3}, \frac{1}{3}, \frac{1}{3}, \frac{1}{6}
\right).
\]
In these tables, the entry under ``$m$" is left blank for the cases satisfying the INT condition, for which $S_m$ is trivial. In the cases satisfying $\Sigma$INT, the $m$ is given explicitly. The subdivision into two tables is related to the nature of the discrete arithmetic group $\Gamma_{\mathbf{w}}$. If $\Gamma_{\mathbf{w}}$ is realized as automorphisms of a lattice over the Gaussian integers $\mathbb{Z}[i]$, then we refer to $\Gamma_{\mathbf{w}}\backslash\mathbb{B}_n$ as \emph{Gaussian ball quotient} (Table~\ref{table:DMcasesGaussian}). If $\Gamma_{\mathbf{w}}$ is realized as automorphisms of a lattice over the Eisenstein integers $\mathbb{Z}[\omega]$, where $\omega$ is a primitive third root of unity, then we refer to $\Gamma_{\mathbf{w}}\backslash\mathbb{B}_n$ as \emph{Eisenstein ball quotient} (Table~\ref{table:DMcasesEisenstein}).
\end{remark}


\begin{remark}
The period map has a very explicit interpretation. Let $d$ be the least positive integer such that for all $i$, $w_i=m_i/d$ where $m_i\in\mathbb{Z}$. The period map uses curves given by the degree $d$ cover of $\mathbb{P}^1$ fully branched along $n$ points, c.f. \S\ref{s1.1}.
\end{remark}


\begin{table}
\caption{Deligne-Mostow Eisenstein cases in Remark~\ref{rmk:Table}.}
\label{table:DMcasesEisenstein}
\setlength{\tabcolsep}{10pt} 
\renewcommand{\arraystretch}{1.5}
\begin{tabular}{|c|c|c||c|c|c|}
\hline
Number of points & Weights 
& $m$
& Number of points & Weights & $m$
\\
\hline
12  &  $\left(\frac{1}{6} \right)^{12}$ 
& 12 & 6 & $\left(\frac{5}{6} \right)\left(\frac{1}{2} \right)\left(\frac{1}{6} \right)^{4}$
& $4$
\\
\hline
11  &  $\left(\frac{1}{3} \right)\left(\frac{1}{6} \right)^{10}$ & 10 & 6 & $\left(\frac{5}{6} \right)\left(\frac{1}{3} \right)^2\left(\frac{1}{6} \right)^{3}$
& $3$
\\
\hline
10  &  $\left(\frac{1}{2} \right)\left(\frac{1}{6} \right)^{9}$ & 9 & 6 & $\left(\frac{2}{3} \right)\left(\frac{1}{3} \right)^3\left(\frac{1}{6} \right)^{2}$
& $2$
\\
\hline
10  &  $\left(\frac{1}{3} \right)^2\left(\frac{1}{6} \right)^{8}$ & 8 & 6 & $\left(\frac{2}{3} \right)\left(\frac{1}{2} \right)\left(\frac{1}{3} \right)\left(\frac{1}{6} \right)^{3}$
& $3$
\\
\hline
9  &  $\left(\frac{2}{3} \right)\left(\frac{1}{6} \right)^{8}$ & 8 & 6 & $\left(\frac{2}{3} \right)^2\left(\frac{1}{6} \right)^{4}$
&
$4$
\\
\hline
9  &  $\left(\frac{1}{2} \right)\left(\frac{1}{3} \right)\left(\frac{1}{6} \right)^{7}$ & 7 & 6 & $\left(\frac{1}{2} \right)^3\left(\frac{1}{6} \right)^{3}$
& $3$
\\
\hline
9  &  $\left(\frac{1}{3} \right)^3\left(\frac{1}{6} \right)^{6}$ & 6 & 6 & $\left(\frac{1}{2} \right)^2 \left(\frac{1}{3} \right)^2\left(\frac{1}{6} \right)^{2}$
& $2$
\\
\hline
8  &  $\left(\frac{1}{3} \right)^4\left(\frac{1}{6} \right)^{4}$ & 4 & 6 & $\left(\frac{1}{2} \right)\left(\frac{1}{3} \right)^{4}\left(\frac{1}{6} \right)$
&
\\
\hline
8  &  $\left(\frac{1}{2} \right)\left(\frac{1}{3} \right)^2\left(\frac{1}{6} \right)^{5}$ & 5 & 6 & $\left(\frac{1}{3} \right)^{6}$
& 
\\
\hline
8  &  $\left(\frac{1}{2} \right)^2\left(\frac{1}{6} \right)^{6}$ & 6 & 5 & $\left(\frac{2}{3} \right)\left(\frac{1}{3} \right)^{4}$
&
\\
\hline
8  &  $\left(\frac{2}{3} \right)\left(\frac{1}{3} \right)\left(\frac{1}{6} \right)^{6}$ & 6 & 5 & $\left(\frac{1}{2} \right)^2\left(\frac{1}{3} \right)^3$
&
\\
\hline
8  &  $\left(\frac{5}{6} \right)\left(\frac{1}{6} \right)^{7}$ 
&
7
& 5 & $\left(\frac{1}{2} \right)^3\left(\frac{1}{3} \right)\left(\frac{1}{6} \right)$
&
\\ 
\hline
7  &  $\left(\frac{5}{6} \right)\left(\frac{1}{3} \right)\left(\frac{1}{6} \right)^{5}$ 
&
5
& 5 & $\left(\frac{2}{3} \right)^2\left(\frac{1}{3} \right)\left(\frac{1}{6} \right)^2$
& $2$
\\
\hline
7  &  $\left(\frac{2}{3} \right)\left(\frac{1}{3} \right)^2\left(\frac{1}{6} \right)^{4}$ 
&
4
& 5 & $\left(\frac{2}{3} \right)\left(\frac{1}{2} \right)^2\left(\frac{1}{6} \right)^2$
& $2$
\\
\hline
7  &  $\left(\frac{2}{3} \right)\left(\frac{1}{2} \right)\left(\frac{1}{6} \right)^{5}$ 
&
5
& 5 & $\left(\frac{2}{3} \right)\left(\frac{1}{2} \right)\left(\frac{1}{3} \right)^2\left(\frac{1}{6} \right)$
&
\\
\hline
7  &  $\left(\frac{1}{2} \right)^2\left(\frac{1}{3} \right)\left(\frac{1}{6} \right)^{4}$ 
&
4
& 5 & $\left(\frac{5}{6} \right)\left(\frac{1}{3} \right)^3\left(\frac{1}{6} \right)$
&
\\
\hline
7  &  $\left(\frac{1}{2} \right)\left(\frac{1}{3} \right)^3\left(\frac{1}{6} \right)^{3}$ &
3
& 5 & $\left(\frac{5}{6} \right)\left(\frac{1}{2} \right)    \left(\frac{1}{3} \right)\left(\frac{1}{6} \right)^2$
&
$2$
\\
\hline
7  &  $\left(\frac{1}{3} \right)^5\left(\frac{1}{6} \right)^2$ & 2 & 5 & $\left(\frac{5}{6} \right)\left(\frac{2}{3} \right)\left(\frac{1}{6} \right)^3$
&
$3$
\\ 
\hline
\end{tabular}
\end{table}


\begin{table}
\caption{Deligne-Mostow Gaussian cases in Remark~\ref{rmk:Table}.}
\label{table:DMcasesGaussian}
\setlength{\tabcolsep}{10pt} 
\renewcommand{\arraystretch}{1.5}
\begin{tabular}{|c|c|c|c|}
\hline
Number of points & Weights & $m$
\\
\hline
8 & $\left(\frac{1}{4} \right)^{8}$ & 
\\
\hline
7 & $\left(\frac{1}{2} \right)\left(\frac{1}{4} \right)^{6}$  &
\\
\hline
6 & $\left(\frac{3}{4} \right)\left(\frac{1}{4} \right)^{5}$ 
&
\\
\hline
6 & $\left(\frac{1}{2} \right)^{2}\left(\frac{1}{4} \right)^{4}$
&
\\
\hline
5 & $\left(\frac{3}{4} \right)\left(\frac{1}{2} \right)\left(\frac{1}{4} \right)^{3}$ &
\\
\hline
5 & $\left(\frac{1}{2} \right)^{3}\left(\frac{1}{4} \right)^{2}$  &
\\
\hline
\end{tabular}
\end{table}



The following result due to Doran (see \cite[Theorem 5 and Corollary 9]{Dor04a}) will be one of the ingredients in the proof of Theorem~\ref{maintheorem}. We recall it as recounted in \cite{DDH18}.

\begin{lemma}
[{\cite[Theorem 4]{DDH18}}]
\label{lemma:DoranBallQ}
The GIT moduli space of eight points in $\mathbb{P}^1$ with $\mathbf{w}=(\frac{1}{4},\ldots,\frac{1}{4})$ is the largest dimensional Gaussian Deligne-Mostow ball quotient $\Gamma_{\mathbf{w}}\backslash\mathbb{B}_5$. All the other Gaussian Deligne-Mostow ball quotients (see Table~\ref{table:DMcasesGaussian}) arise as sub-ball quotients of $\Gamma_{\mathbf{w}}\backslash\mathbb{B}_5$.

The GIT moduli space of $12$ points in $\mathbb{P}^1$ with $\mathbf{w}=\left(\frac{1}{6},\ldots,\frac{1}{6}\right)$ modulo $S_{12}$ is the largest dimensional Eisenstein Deligne-Mostow ball quotient $\Gamma_{\mathbf{w}}\backslash\mathbb{B}_9$. All the other Eisenstein Deligne-Mostow ball quotients (see Table~\ref{table:DMcasesEisenstein}) arise as sub-ball quotients of $\Gamma_{\mathbf{w}}\backslash\mathbb{B}_9$.
\end{lemma}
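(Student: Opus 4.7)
The plan is to exploit the period-map description of the Deligne–Mostow ball quotients via cyclic covers of $\mathbb{P}^1$ in order to realize every case of a fixed CM-type (Gaussian or Eisenstein) inside a single maximal one. Let $d$ denote the common denominator of the weights $\mathbf{w}$. The relevant VHS has fibers carved out from $H^1(C_{\mathbf{w},\mathbf{x}})$ on the degree-$d$ cyclic cover $C_{\mathbf{w},\mathbf{x}}$ branched at the $x_j$ with ramification data $dw_j$, and $\Gamma_{\mathbf{w}}$ acts on this via its action on the ambient $\mathcal{O}_K$-lattice, where $K = \mathbb{Q}(i)$ for $d=4$ and $K = \mathbb{Q}(\omega)$ for $d = 6$. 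In the Gaussian cases of Table~\ref{table:DMcasesGaussian} the denominator is always $4$ (weights $\tfrac{1}{2}$ being written as $\tfrac{2}{4}$), and in the Eisenstein cases of Table~\ref{table:DMcasesEisenstein} it is always $6$. So all cases of a fixed CM-type admit a uniform covering interpretation.

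The first step is to identify the maximal case. Since each weight is a positive multiple of $1/d$ and $\sum_j w_j = 2$, one has $n \leq 2d$, with equality forcing $w_j = 1/d$ for every $j$. This pins down the two candidates $(\tfrac{1}{4})^8$ and $(\tfrac{1}{6})^{12}$, which give ball dimensions $5$ and $9$ respectively; in the second case, the full symmetric group $S_{12}$ enters because $(1/6, 1/6)$ falls under $\Sigma$INT rather than INT.

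The second and main step is to embed an arbitrary Deligne–Mostow case $\mathbf{w}' = (k_1/d,\ldots,k_{n'}/d)$ as a sub-ball quotient of the maximal one. I would split each weight $k_j/d$ into $k_j$ copies of $1/d$, thereby passing from the configuration space of $n'$ points to a stratum of the configuration space of $2d$ points in which $k_j$ labels are forced to coincide at $x_j$. On the level of Hodge theory, degenerating the maximal-weight cover along this collision locus produces $C_{\mathbf{w}',\mathbf{x}'}$ in the limit, and the corresponding $K$-eigenspace in $H^1$ appears as a primitive sub-Hodge structure cut out by the vanishing cycles at the collision; in the ball model this eigenspace is a totally geodesic complex sub-ball $\mathbb{B}_{n'-3}\subset \mathbb{B}_{n-3}$ of the correct dimension $n'-3=\sum_j(k_j-1)$ fewer than the ambient one. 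Quotienting by the Young subgroup $\prod_j S_{k_j}$ (sitting inside the relevant $S_m$) identifies the interchangeable split labels, and the stabilizer of the sub-ball in $\Gamma_{\mathbf{w}_{\max}}$ modulo its pointwise stabilizer is precisely $\Gamma_{\mathbf{w}'}$.

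The principal obstacle is verifying that the Hodge-theoretic sub-ball coincides with the arithmetic sub-ball cut out by the lattice stabilizer, i.e., that the $\mathcal{O}_K$-sublattice preserved by the collision is exactly the $\mathbf{w}'$-lattice and carries the Hermitian form of the correct signature. This can be handled by writing down an explicit $\mathcal{O}_K$-basis of vanishing cycles at each partial collision and computing the intersection form (a standard Picard–Lefschetz calculation for cyclic covers), after which arithmeticity of $\Gamma_{\mathbf{w}'}$ and commensurability with the induced subgroup of $\Gamma_{\mathbf{w}_{\max}}$ follow from the general Deligne–Mostow machinery of \cite{DM86}. Running this argument for $d=4$ and $d=6$ in parallel yields both assertions of the lemma.
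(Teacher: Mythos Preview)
The paper does not give its own proof of this lemma: it is quoted verbatim from \cite[Theorem~4]{DDH18}, with the underlying argument due to Doran \cite[Theorem~5 and Corollary~9]{Dor04a}. The example that the paper places immediately after the lemma (recovering $(\tfrac{1}{3})^6$ from $(\tfrac{1}{6})^{12}$ by colliding pairs of points) illustrates exactly the mechanism you describe, so your sketch is aligned with the intended argument rather than an alternative to it.

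Your outline is correct in its architecture: the bound $n\le 2d$ singles out the ancestral cases, and splitting each weight $k_j/d$ into $k_j$ copies of $1/d$ embeds the smaller configuration space as a collision stratum of the ancestral one, with the period map sending it to a totally geodesic sub-ball. The point you correctly flag as the principal obstacle---that the lattice stabilizer of this sub-ball cuts out exactly $\Gamma_{\mathbf{w}'}$, not merely a commensurable group---is the actual content of Doran's theorem and is not something one can wave away with a Picard--Lefschetz remark. In \cite{Dor04a} this is done by tracking the Hermitian lattice through the collision and invoking the explicit lattice presentations available in the Gaussian and Eisenstein settings; your last paragraph gestures at this but does not carry it out. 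As a proof \emph{sketch} your proposal is sound and faithful to the source; as a proof it stops short at the step where the real work lies.
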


\begin{remark}
The cases corresponding to $n=8$ with weights
$\mathbf{w} = \left( \frac{1}{4} \right)^8$ and $n=12$ with weights 
$\mathbf{w} = \left( \frac{1}{6} \right)^{12}$ are known as the \emph{ancestral cases}. The ancestral case for $n=8$ cannot be deduced from the ancestral case for $n=12$ because the monodromy group for $n=8$ is defined over the Gaussian integers, while the monodromy group for $n=12$ is defined over the Eisenstein integers.
\end{remark}

\begin{example}
We can recover the GIT quotient of six points with respect to the weights 
$\mathbf{\frac{1}{3}} = \left(  \frac{1}{3} \right)^6 $
from the  GIT quotient of twelve points with respect to the weights $\mathbf{\frac{1}{6}} = \left(  \frac{1}{6} \right)^{12}$. 
Indeed,  configurations of twelve points with two colliding points are stable. Therefore, the locus where there are six pairs of colliding points, say $p_1=p_7,p_2=p_8,\ldots,p_6=p_{12}$, can be used to define an embedding
\begin{align*}
( \mathbb{P}^1)^6/\!/_{ \mathbf{\frac{1}{3}} } \SL_2 
\hookrightarrow
( \mathbb{P}^1)^{12}/\!/_{ \mathbf{\frac{1}{6}}} \SL_2.
\end{align*}
Now consider the Deligne-Mostow isomorphisms $(\mathbb{P}^1)^{12}/\!/_{ \mathbf{\frac{1}{6}}}\SL_2\rtimes S_{12}\cong\overline{\Gamma_{ \mathbf{\frac{1}{6}} } \backslash\cB_{9}}^{\bb}$ and $(\mathbb{P}^1)^6/\!/_{ \mathbf{\frac{1}{3}}}\SL_2\cong\overline{\Gamma_{ \mathbf{\frac{1}{3}} } \backslash\cB_{3}}^{\bb}$. We have the following commutative diagram:
\begin{center}
\begin{tikzpicture}[>=angle 90]
\matrix(a)[matrix of math nodes,
row sep=2em, column sep=2em,
text height=2.5ex, text depth=1.25ex]
{( \mathbb{P}^1)^6/\!/_{ \mathbf{\frac{1}{3}} } \SL_2&( \mathbb{P}^1)^{12}/\!/_{ \mathbf{\frac{1}{6}} } \SL_2\rtimes S_{12}\\
\overline{
\Gamma_{ \mathbf{\frac{1}{3}} } \backslash
\cB_{3}
}^{\bb}
&
\overline{
\Gamma_{ \mathbf{\frac{1}{6}} } \backslash
\cB_{9}
}^{\bb},
\\};
\path[->] (a-1-1) edge node[above]{}(a-1-2);
\path[->] (a-1-1) edge node[left]{$\cong$}(a-2-1);
\path[->] (a-2-1) edge node[above]{}(a-2-2);
\path[->] (a-1-2) edge node[right]{$\cong$}(a-2-2);
\end{tikzpicture}
\end{center}
where the horizontal maps are degree $6!$ covers.
\end{example}

\begin{remark}
There are some Deligne-Mostow cases which are not contained in one of the two ancestral ones. These special cases are not considered in this paper because either the corresponding Baily-Borel compactifications have no cusps, or the monodromy group is not arithmetic.
\end{remark}


\subsection{Weighted stable rational curves}
\label{sectiononhassettsmodulispaces}

In \cite{Has03}, Hassett introduced the moduli spaces of \emph{weighted stable curves}. In the current paper we are interested in the genus zero case, which we now recall. Let $n$ be a positive integer and let $0<b_1,\ldots,b_n\leq1$ be rational numbers.

\begin{definition}[{\cite[\S2]{Has03}}]
A \emph{weighted stable rational curve for the weight} $\mathbf{b}=(b_1,\ldots,b_n)$ is a pair $(X,\sum_{i=1}^nb_ix_i)$, where
\begin{enumerate}

\item $X$ is a nodal connected curve of arithmetic genus zero;

\item $x_1,\ldots,x_n$ are smooth points of $X$;

\item If $x_{i_1}=\ldots=x_{i_r}$, then $b_{i_1}+\ldots+b_{i_r}\leq1$;

\item The divisor $K_X+\sum_{i=1}^nb_ix_i$ is ample. In other words, for each irreducible component $C\subseteq X$, if $N$ denotes the number of nodes of $X$ on $C$,  we have that
\[
N+\sum_{x_i\in C}b_i>2.
\]
\end{enumerate}
\end{definition}

\begin{theorem}[{\cite[Theorem 2.1]{Has03}}]
There exists a fine smooth projective moduli space $\overline{\mathrm{M}}_{0,\mathbf{b}}$ parametrizing weighted stable rational curves of weight $\mathbf{b}$, and containing $\mathrm{M}_{0,n}$ as a Zariski open subset. In particular, for $\mathbf{b}=(1, \ldots, 1)$, this space is the Deligne-Mumford-Knudsen compactification $\overline{\mathrm{M}}_{0,n}$.
\end{theorem}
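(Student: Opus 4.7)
The plan is to build $\overline{\mathrm{M}}_{0,\mathbf{b}}$ as an explicit contraction of the Deligne-Mumford-Knudsen space $\overline{\mathrm{M}}_{0,n}$, taking the case $\mathbf{b}=(1,\ldots,1)$ of the theorem as known from the work of Knudsen. The guiding observation is that any $n$-pointed stable rational curve $(X,x_1,\ldots,x_n)$ admits a canonical $\mathbf{b}$-reduction obtained by iteratively contracting each component $C\subseteq X$ on which $(K_X+\sum b_i x_i)|_C$ fails to be ample---equivalently, those satisfying $N_C+\sum_{x_i\in C}b_i\leq 2$ with $N_C$ the number of nodes of $X$ on $C$---and by identifying colliding marked points whose total weight stays bounded by $1$.

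The steps I would carry out are as follows. First, define the moduli functor $\overline{\mathcal{M}}_{0,\mathbf{b}}$ of flat families with $n$ sections whose geometric fibers are $\mathbf{b}$-stable rational curves. Second, verify that the reduction procedure described above varies well in families, yielding a natural transformation $\overline{\mathcal{M}}_{0,n}\to \overline{\mathcal{M}}_{0,\mathbf{b}}$. Third, realize the target as the log canonical model $\mathrm{Proj}\bigoplus_k H^0(\overline{\mathrm{M}}_{0,n},\lfloor k(K+\sum c_I D_I)\rfloor)$, where the coefficients $c_I$ on the boundary divisors $D_I$ (indexed by bipartitions $I\sqcup I^c$ of $\{1,\ldots,n\}$) are chosen so that $D_I$ is contracted exactly when the partition corresponds to an unstable weighted configuration. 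Fourth, deduce smoothness from the fact that the induced morphism realizes $\overline{\mathrm{M}}_{0,\mathbf{b}}$ as an iterated smooth blow-down of $\overline{\mathrm{M}}_{0,n}$; projectivity is automatic from the log canonical construction. Fifth, the \emph{fine} moduli property follows from condition (4): on each component $C\cong\mathbb{P}^1$ of a $\mathbf{b}$-stable curve the marked points and nodes provide a rigidifying triple of distinguished points, so no nontrivial automorphism of the pair exists.

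Having established existence, the containment $\mathrm{M}_{0,n}\subseteq \overline{\mathrm{M}}_{0,\mathbf{b}}$ as a Zariski open subset is immediate, since the locus of smooth curves with pairwise distinct marked points is the open complement of the image of the boundary of $\overline{\mathrm{M}}_{0,n}$. When $\mathbf{b}=(1,\ldots,1)$, condition (3) forbids all coincidences of marked points and condition (4) reduces to Deligne-Mumford-Knudsen stability $N_C+\#\{x_i\in C\}>2$; the reduction morphism therefore does nothing and $\overline{\mathrm{M}}_{0,\mathbf{b}}=\overline{\mathrm{M}}_{0,n}$.

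The hard part is the third step: promoting the set-theoretic reduction to an actual scheme-theoretic morphism. Concretely, one must exhibit a semiample line bundle on $\overline{\mathrm{M}}_{0,n}$ that is trivial on the loci to be contracted and positive elsewhere. Hassett's original approach constructs such a bundle explicitly from the relative dualizing sheaf of the universal family, using appropriate pushforwards of $\omega_\pi(\sum b_i s_i)^{\otimes N}$ for large divisible $N$; an alternative is to invoke the MMP for the pair $(\overline{\mathrm{M}}_{0,n},\sum c_I D_I)$, whose finite generation of the log canonical ring now follows from general results. Either route requires a careful choice of the $c_I$ so that the numerical contraction criterion agrees precisely with the combinatorial instability condition identified in Step~1.
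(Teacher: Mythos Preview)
The paper does not contain a proof of this statement: it is quoted verbatim as background from Hassett's original article \cite{Has03} (hence the attribution ``\cite[Theorem 2.1]{Has03}'' in the theorem header), and the surrounding section only uses the result, never argues for it. There is therefore nothing in the present paper to compare your proposal against.

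That said, a brief comment on your sketch relative to Hassett's actual argument. Your plan inverts the logical order Hassett uses: he first constructs $\overline{\mathrm{M}}_{0,\mathbf{b}}$ directly as a fine moduli space (via deformation theory for the stability condition and Koll\'ar's projectivity package, or equivalently as $\Proj$ of the pushforward of powers of $\omega_\pi(\sum b_i s_i)$ on the universal curve), establishes smoothness from the unobstructedness of deformations of nodal genus-zero curves with marked points, and only \emph{afterwards} produces the reduction morphism $\overline{\mathrm{M}}_{0,n}\to\overline{\mathrm{M}}_{0,\mathbf{b}}$. Your Step~4, deducing smoothness from the claim that the reduction is an ``iterated smooth blow-down,'' is circular as written: knowing that a birational morphism from a smooth variety has connected fibers does not by itself force the target to be smooth, and the log canonical model construction in Step~3 a priori only yields a normal projective variety. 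To make your route work you would need an independent smoothness argument (e.g.\ the deformation-theoretic one), at which point the log canonical model machinery becomes unnecessary for the existence statement and serves only to identify the reduction map.
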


\begin{remark}
Let $\mathbf{b}=(b_1,\ldots,b_n)$ and $\mathbf{c}=(c_1,\ldots,c_n)$ be weights such that $b_i\geq c_i$ for all $i$. Then there exists a birational morphism
\[
\rho_{\mathbf{b},\mathbf{c}}\colon\overline{\mathrm{M}}_{0,\mathbf{b}}\rightarrow\overline{\mathrm{M}}_{0,\mathbf{c}}
\]
called the reduction morphism (see \cite[]{Has03}). The locus where $\rho_{\mathbf{b},\mathbf{c}}$ is an isomorphism contains the locus parametrizing $n$ distinct points in $\mathbb{P}^1$.
\end{remark}

\begin{example}\label{ex:Reduc}
Define the collection of eight weights $\mathbf{\frac{1}{4}}+\epsilon=\left( \frac{1}{4}+\epsilon, \ldots, \frac{1}{4}+\epsilon\right)$, where $\epsilon\ll1$ is a positive rational number. Consider the reduction morphism $\rho\colon\overline{\mathrm{M}}_{0,8}\rightarrow\overline{\mathrm{M}}_{0,\mathbf{\frac{1}{4}}+\epsilon}$. Let $D_{123,45678} \subseteq\overline{\mathrm{M}}_{0,8}$ be the boundary divisor generically parametrizing the gluing of two copies of $\mathbb{P}^1$ at one point, where one copy supports the points $\{ p_1, p_2, p_3\}$ while the other one supports the points $\{ p_4, p_5, p_6, p_7, p_8 \}$. The morphism $\rho$ contracts the divisor $D_{123,45678}$ onto a codimension two locus generically parametrizing $\mathbb{P}^1$ supporting the configurations of points $\{p_1=p_2=p_3,p_4, p_5, p_6, p_7, p_8\}$.
\end{example}

\begin{remark}
Given a collection of positive weights $\mathbf{a}=(a_1, \ldots, a_n)$ such that $a_i\leq1$ and $a_1 + \ldots + a_n=2$, then these do not define a moduli space of weighted stable rational curves, but rather the GIT quotient $(\mathbb{P}^1)^n / \! /_{\mathbf{a}} \text{SL}_2$. Now let $\mathbf{b}=(b_1,\ldots,b_n)$ be a second collection of weights such that $b_i\geq a_i$ for all $i$ and $b_1+\ldots+b_n>2$. Then there exists a birational morphism
\[
\overline{\mathrm{M}}_{0,\mathbf{b}}\rightarrow
(\mathbb{P}^1)^n / \! /_{\mathbf{a}} \SL_2.
\]
Note that if the GIT quotient does not have strictly semistable points and $b_i$ is sufficiently close to $a_i$ for all $i$, then the birational morphism above is an isomorphism. On the other hand, if the GIT quotient has strictly semistable points and again the weights $b_i$ are sufficiently close to the $a_i$, then $\overline{\mathrm{M}}_{0,\mathbf{b}}\rightarrow
(\mathbb{P}^1)^n / \! /_{\mathbf{a}} \SL_2$ is Kirwan's partial desingularization.
\end{remark}

\begin{example}\label{ex:8points}
Given the collection of eight weights $\mathbf{\frac{1}{4}}=\left( 
\frac{1}{4}, \ldots, \frac{1}{4}
\right)$, we have a birational morphism 
\[
f\colon\overline{\mathrm{M}}_{0,\mathbf{\frac{1}{4}} + \epsilon}
\rightarrow
(\mathbb{P}^1)^8 / \! /_{\mathbf{\frac{1}{4}}} \SL_2,
\]
which is a blow up at the strictly semi-stable points of the GIT quotient. The exceptional divisors are isomorphic to $\mathbb{P}^2 \times \mathbb{P}^2$. From the moduli perspective, the morphism $f$ is interpreted as follows. A point $x$ in a dense open subset of the exceptional divisors parametrizes an $8$-pointed stable rational curve as shown on the left of Figure~\ref{modularinterpretationreductiontogit}. Then $f(x)$ is the point in $(\mathbb{P}^1)^8/\!/_{\mathbf{\frac{1}{4}}}\SL_2$ corresponding to the strictly semi-stable configuration on the right of Figure~\ref{modularinterpretationreductiontogit}.
\end{example}

\begin{figure}
\begin{tikzpicture}[scale=0.3]

	\draw[line width=1.0pt] (-2,-1) -- (10,5);

	\draw[line width=1.0pt] (6,5) -- (18,-1);

	\fill (0,0) circle (8pt);
	\fill (2,1) circle (8pt);
	\fill (4,2) circle (8pt);
	\fill (6,3) circle (8pt);

	\fill (10,3) circle (8pt);
	\fill (12,2) circle (8pt);
	\fill (14,1) circle (8pt);
	\fill (16,0) circle (8pt);

	\node at (0,0+1) {$p_1$};
	\node at (2,1+1) {$p_2$};
	\node at (4,2+1) {$p_3$};
	\node at (6,3+1) {$p_4$};

	\node at (10,3+1) {$p_5$};
	\node at (12,2+1) {$p_6$};
	\node at (14,1+1) {$p_7$};
	\node at (16,0+1) {$p_8$};

	\draw[line width=1.0pt] (22,2) -- (22+12,2);

	\fill (22+2,2) circle (8pt);
	\fill (22+10,2) circle (8pt);

	\node at (22+2,2+1.5) {$p_1,\ldots,p_4$};
	\node at (22+10,2+1.5) {$p_5,\ldots,p_8$};

\end{tikzpicture}
\caption{Modular interpretation of the morphism $\overline{\mathrm{M}}_{0,\mathbf{\frac{1}{4}}+\epsilon}\rightarrow(\mathbb{P}^1)^n/\!/_{\mathbf{\frac{1}{4}}}\SL_2$ on a dense open subset of the exceptional divisors.}
\label{modularinterpretationreductiontogit}
\end{figure}
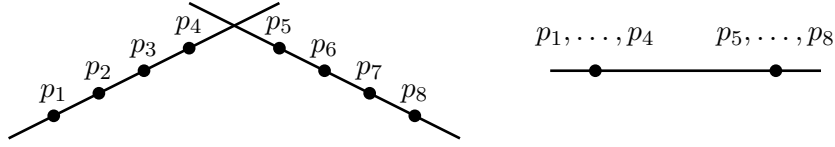


\subsection{Proof of the main theorem for the ancestral cases}
\label{prooftheoremintwoancestralcases}

The first step is to prove the following proposition.

\begin{proposition}
\label{lemma:OpenExt}
The following hold:
\begin{enumerate}

\item The rational map
$
\overline{\mathrm{M}}_{0,\mathbf{\frac{1}{4}}+\epsilon}
\dashrightarrow 
\overline{
\Gamma_{\mathbf{\frac{1}{4}}}
\backslash
\mathbb{B}_5}^{\tor}
$ 
extends to an isomorphism away from closed subsets of codimension at least two.

\item The rational map
$
\overline{\mathrm{M}}_{0,\mathbf{\frac{1}{6}}+\epsilon}/S_{12}
\dashrightarrow 
\overline{
\Gamma_{\mathbf{\frac{1}{6}}}
\backslash
\mathbb{B}_9}^{\tor}
$ 
extends to an isomorphism away from closed subsets of codimension at least two.

\end{enumerate}
\end{proposition}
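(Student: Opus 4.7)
The plan is to exploit the factorization of both sides through the common Baily--Borel compactification $\overline{\Gamma_{\mathbf{w}}\backslash\mathbb{B}_{n-3}}^{\bb}$, using Deligne--Mostow on the interior and then extending across codimension-one boundary divisors by matching them on both sides. On the open subset $\mathrm{M}_{0,n}/S_m$ of $\overline{\mathrm{M}}_{0,\mathbf{w}+\epsilon}/S_m$, the Deligne--Mostow period map defines an isomorphism onto $\Gamma_{\mathbf{w}}\backslash\mathbb{B}_{n-3}$ minus the cusps, which embeds in the toroidal compactification; so the desired birational map is already defined and an isomorphism there, and it remains to extend it over the generic points of the codimension-one boundary divisors on each side.

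Next, I would match the exceptional divisors combinatorially. On the Hassett side, the reduction morphism of Example~\ref{ex:8points} to the GIT contracts precisely those boundary divisors $D_{I,I^c}/S_m$ for which $\sum_{i\in I} w_i = 1$, these being the only divisors going to strictly semistable (cusp) loci. In case (1), $\binom{8}{4}/2 = 35$ such divisors match the 35 GIT cusps; in case (2), a single divisor after $S_{12}$ matches the unique cusp. On the toroidal side, each cusp is blown up into exactly one irreducible boundary divisor, so the codimension-one boundary divisors biject naturally on the two sides with the bijection respecting the projections to the Baily--Borel.

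Finally, the extension of the period map across each exceptional divisor $D_{I,I^c}/S_m$ is effected by the limiting mixed Hodge structure of the family $C_{\mathbf{w},\mathbf{x}}$ of cyclic covers described in \S\ref{s1.1}. By the nilpotent orbit theorem, this extension lands in the toroidal boundary component attached to the corresponding cusp and is holomorphic on a Zariski open of $D_{I,I^c}/S_m$. An explicit computation (as carried out in Theorem~\ref{thm:Matt} for $n=8$, together with an Eisenstein analog for $n=12$) identifies both the Hassett boundary component and the toroidal boundary component generically with $\mathbb{P}^2\times\mathbb{P}^2$ (respectively, with the appropriate quotient of a product of Eisenstein elliptic curves) and shows that the extended period map is an isomorphism between these opens.

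The main obstacle is precisely this last identification: showing that the LMHS recovers on the nose the product structure that the Hassett boundary carries as a product of two Hassett moduli of weighted rational curves. One needs to track the action of $\rho^*$ (the generator of the deck group of the $d$-fold cover) through the nilpotent orbit and identify the Jacobian of the limiting curve (a product of two CM abelian varieties) together with its finite-group quotient. Once this step is in hand, combining the interior isomorphism with the generic isomorphism on each codimension-one boundary divisor yields the desired isomorphism away from closed subsets of codimension at least two, as asserted.
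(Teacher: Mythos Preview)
Your approach is genuinely different from the paper's, and as written it has a real gap.

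\textbf{What the paper does.} The paper avoids any direct Hodge-theoretic computation on the boundary. Instead it passes through the full Deligne--Mumford compactification $\overline{\mathrm{M}}_{0,n}$ as a ``roof'': since $\overline{\mathrm{M}}_{0,n}$ is a smooth normal crossing compactification of $\Gamma\backslash(\mathbb{B}\setminus\mathcal{H})$ and $\Gamma$ acts freely on $\mathbb{B}\setminus\mathcal{H}$, Lemma~\ref{extensiontotoroidalwithnccompactificationandfreeactionandhyperplanearrangement} produces an honest morphism $\ell\colon\overline{\mathrm{M}}_{0,n}\to\overline{\Gamma\backslash\mathbb{B}}^{\tor}$. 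The desired rational map on $\overline{\mathrm{M}}_{0,\mathbf{w}+\epsilon}$ is then built by gluing two pieces: $\ell\circ\rho^{-1}$ on the locus where the reduction $\rho\colon\overline{\mathrm{M}}_{0,n}\to\overline{\mathrm{M}}_{0,\mathbf{w}+\epsilon}$ is an isomorphism, and $\varphi^{-1}\circ\pi$ over the interior of the Baily--Borel. One then checks that the domain of definition has complement of codimension $\geq 2$, that the map is dominant on each exceptional divisor (because $\ell$ is surjective), and applies the purely algebraic Lemma~\ref{extensionOpen} (a Zariski Main Theorem argument) to conclude. No identification of the boundary components as $\mathbb{P}^2\times\mathbb{P}^2$, and no LMHS computation, is used at this stage.

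\textbf{The gap in your proposal.} Your argument hinges on the step you yourself flag as the main obstacle: showing that the LMHS extension is a generic \emph{isomorphism} (not merely a morphism) between each Hassett boundary divisor and the corresponding toroidal boundary divisor. You propose to handle this by invoking Theorem~\ref{thm:Matt}, but in the paper's logical structure that result (and all of \S\ref{sec:Matt}) is proved \emph{after} Theorem~\ref{maintheorem}, which in turn rests on the present Proposition; indeed Proposition~\ref{prop:Matt3} explicitly defines $\overline{\Phi}$ to be the isomorphism already supplied by Theorem~\ref{maintheorem}. So citing Theorem~\ref{thm:Matt} here is circular. One could in principle carry out the LMHS computation independently and ab initio, but you have not done so, and for $n=12$ you only posit an ``Eisenstein analog'' without supplying it---the paper provides no such analog. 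The paper's roof-and-glue argument sidesteps all of this: dominance on the boundary follows from surjectivity of $\ell$, and the isomorphism in codimension two is pure Zariski Main Theorem, with no need to identify what the boundary components actually are.
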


\begin{proof}
We first discuss the case $n=8$ and $\mathbf{w}=\mathbf{\frac{1}{4}}$. For simplicity of notation we set $\mathbb{B}=\mathbb{B}_5$ and $\Gamma=\Gamma_{\mathbf{\frac{1}{4}}}$. Consider the following morphisms:
\begin{enumerate}

\item The birational reduction morphism $\rho\colon\overline{\mathrm{M}}_{0,8}\rightarrow\overline{\mathrm{M}}_{0,\mathbf{\frac{1}{4}}+\epsilon}$.

\item The birational map $\overline{\mathrm{M}}_{0,8}\dashrightarrow\overline{\Gamma\backslash\mathbb{B}}^{\tor}$, which by Lemma~\ref{extensiontotoroidalwithnccompactificationandfreeactionandhyperplanearrangement} extends to a birational morphism $\ell\colon\overline{\mathrm{M}}_{0,8}\rightarrow\overline{\Gamma\backslash\mathbb{B}}^{\tor}$. Let us explain why we can apply Lemma~\ref{extensiontotoroidalwithnccompactificationandfreeactionandhyperplanearrangement}. First of all, using the isomorphism $\Gamma\backslash\mathbb{B}\cong((\mathbb{P}^1)^8)^s/\!/_{\mathbf{\frac{1}{4}}}\SL_2$, let $\Gamma\backslash\mathcal{H}$ be the locus where at least two points coincide. By \cite[\S2]{MY93}, we have that $\Gamma$ acts freely on $\mathbb{B}\setminus\mathcal{H}$. Finally $\overline{\mathrm{M}}_{0,8}$ is a normal crossing compactification of $\Gamma\backslash(\mathbb{B}\setminus\mathcal{H})\cong\mathrm{M}_{0,8}$.

\item The composition $\pi\colon\overline{\mathrm{M}}_{0, \mathbf{\frac{1}{4}}+\epsilon}\rightarrow
(\mathbb{P}^1)^8/\!/_{\mathbf{\frac{1}{4}}}\SL_2\cong\overline{\Gamma\backslash\mathbb{B}}^{\bb}$, which is the regular blow up at the cusps (see
\cite[Theorem 1.1]{KM11}).

\item The blow up $\varphi\colon\overline{\Gamma\backslash\mathbb{B}}^{\tor}\rightarrow\overline{\Gamma\backslash\mathbb{B}}^{\bb}$.

\end{enumerate}

We can fit all these maps in the following commutative diagram:

\begin{center}
\begin{tikzpicture}[>=angle 90]
\matrix(a)[matrix of math nodes,
row sep=2em, column sep=2em,
text height=2.0ex, text depth=0.50ex]
{&\overline{\mathrm{M}}_{0,8}&\\
\overline{\mathrm{M}}_{0,\mathbf{\frac{1}{4}}+\epsilon}&&\overline{\Gamma\backslash\mathbb{B}}^{\tor}\\
&\overline{\Gamma\backslash\mathbb{B}}^{\bb}.&\\};
\path[->] (a-1-2) edge node[above left]{$\rho$}(a-2-1);
\path[->] (a-1-2) edge node[above right]{$\ell$}(a-2-3);
\path[->] (a-2-1) edge node[below left]{$\pi$}(a-3-2);
\path[->] (a-2-3) edge node[below right]{$\varphi$}(a-3-2);
\end{tikzpicture}
\end{center}
The above diagram commutes because the two compositions $\pi\circ\rho$ and $\varphi\circ\ell$ agree on the open subset $\mathrm{M}_{0,8}\subseteq\overline{\mathrm{M}}_{0,8}$, over which they are an isomorphism with $\Gamma\backslash(\mathbb{B}\setminus\mathcal{H})$.

We now construct a birational map $f\colon\overline{\mathrm{M}}_{0,\mathbf{\frac{1}{4}}+\epsilon}\dashrightarrow\overline{\Gamma\backslash\mathbb{B}}^{\tor}$ defined on an open subset whose complement has codimension at least two. Let $U\subseteq\overline{\mathrm{M}}_{0,8}$ be the maximal dense open subset where $\rho$ is an isomorphism. Notice that $\mathrm{M}_{0,8}\subsetneq U$: for instance, a dense open subset of the boundary divisors $D_{ij}$ is contained in $U$. Define
\[
f_1=\ell\circ\rho|_U^{-1}\colon \rho(U)\rightarrow\overline{\Gamma\backslash\mathbb{B}}^{\tor}.
\]
Moreover, define $f_2\colon\pi^{-1}(\Gamma\backslash\mathbb{B})\rightarrow\overline{\Gamma\backslash\mathbb{B}}^{\tor}$ to be the composition $\varphi|_{\Gamma\backslash\mathbb{B}}^{-1}\circ\pi|_{\pi^{-1}(\Gamma\backslash\mathbb{B})}$. If we take $P^0$ to be the union $\rho(U)\cup\pi^{-1}(\Gamma\backslash\mathbb{B})$, then we can define $f\colon P^0\rightarrow\overline{\Gamma\backslash\mathbb{B}}^{\tor}$ by gluing $f_1,f_2$. To be able to do this, we have to check that $f_1$ and $f_2$ agree on $\rho(U)\cap\pi^{-1}(\Gamma\backslash\mathbb{B})$. Let $V\subseteq\rho(U)\cap\pi^{-1}(\Gamma\backslash\mathbb{B})$ be the dense open subset corresponding to eight distinct points in $\mathbb{P}^1$. Notice that $f_1|_V=f_2|_V$ by the commutativity of the diagram above. This implies that $f_1$ and $f_2$ agree on the whole $\rho(U)\cap\pi^{-1}(\Gamma\backslash\mathbb{B})$ by \cite[Chapter II, Exercise 4.2]{Har77}.

Let $D$ be the exceptional locus of the morphism $\pi$. Notice that $D\cap P^0=D\cap\rho(U)\neq\emptyset$, so $P^0$ intersects $D$ in a dense open subset. This implies that
\[
\codim\left(\overline{\mathrm{M}}_{0, \mathbf{\frac{1}{4}} + \epsilon}\setminus P^0\right) \geq 2.
\]
Moreover, let $B$ be the exceptional locus of the morphism $\varphi$. We have that
\[
f(D\cap P^0)=f(D\cap\rho(U))=\ell(\rho|_U^{-1}(D\cap\rho(U)))=\ell(\rho^{-1}(D)\cap U)
\]
gives a dense open subset of $B$ because $\ell$ is surjective ($\ell$ is dominant and it is a morphism of projective varieties), hence $f|_D\colon D\dashrightarrow B$ is a dominant map. In conclusion, it follows by Lemma~\ref{extensionOpen} that $f$ restricts to an isomorphism between $\overline{\mathrm{M}}_{0,\mathbf{\frac{1}{4}}+\epsilon}$ and $\overline{\Gamma\backslash\mathbb{B}}^{\tor}$ away from closed subsets of codimension at least two.

We now move to the $12$ points case, where again for simplicity of notation we set $\mathbb{B}=\mathbb{B}_9$ and $\Gamma=\Gamma_{\mathbf{\frac{1}{6}}}$. Let us start by considering the Deligne-Mostow isomorphism $\overline{\Gamma\backslash\mathbb{B}}^{\bb}\cong(\mathbb{P}^1)^{12}/\!/_{\mathbf{\frac{1}{6}}}\SL_2\rtimes S_{12}$. By \cite[Theorem 7.8]{HL00}, the locus in the GIT quotient where at least two points coincide corresponds under this isomorphism to $\Gamma\backslash\mathcal{H}$, where $\mathcal{H}$ can be described as follows. Let $\mathbb{Z}[\omega]$ be the ring of Eisenstein integers, where $\omega$ is a primitive cube root of $1$. The ball $\mathbb{B}$ is obtained from a specific hyperbolic lattice $L$ over the Eisenstein integers (see \cite[\S5]{All00} for an explicit description of $L$). If $r\in L$ is an element such that $r^2=-3$ and $r\in(\omega-\overline{\omega})L^*$, then the isometry
\[
x\mapsto x-(1-\omega)\frac{x\cdot r}{r^2}r
\]
has order $3$ and is called a \emph{triflection}. For a triflection we define the \emph{mirror} as the orthogonal complement of $r$. Then $\mathcal{H}$ is given by the mirrors of these triflections. These are all the mirrors by \cite[\S2.2]{Bas07}. Then $\Gamma$ acts freely on $\mathbb{B}\setminus\mathcal{H}$ by \cite[Theorem 1.5]{Ste64} (see also \cite{Leh04}).

By taking the composition of the maps
\[
\overline{\mathrm{M}}_{0,12}\rightarrow\overline{\mathrm{M}}_{0,\mathbf{\frac{1}{6}} + \epsilon }\rightarrow(\mathbb{P}^1)^{12}/\!/_{\mathbf{\frac{1}{6}}}\SL_2 \rtimes S_{12}\xrightarrow{\cong}\overline{\Gamma\backslash\mathbb{B}}^{\bb},
\]
we have a generically finite morphism 
$
\overline{\mathrm{M}}_{0,12} \longrightarrow
\overline{ \Gamma \backslash \cB}^{\bb}
$ 
such that the preimage of the 
$\Gamma
\backslash\mathcal{H}$ is the boundary of $\overline{\mathrm{M}}_{0,12}$, which we know is normal crossing. Therefore by Lemma~\ref{extensiontotoroidalwithnccompactificationandfreeactionandhyperplanearrangement} we have a morphism $\overline{\mathrm{M}}_{0,12}\rightarrow\overline{\Gamma\backslash\mathbb{B}}^{\tor}$.

We now construct a birational map $f\colon\overline{\mathrm{M}}_{0,\mathbf{\frac{1}{6}}+\epsilon}\dashrightarrow\overline{\Gamma\backslash\mathbb{B}}^{\tor}$ defined on an open subset whose complement has codimension at least two. Consider the commutative diagram
\begin{center}
\begin{tikzpicture}[>=angle 90]
\matrix(a)[matrix of math nodes,
row sep=2em, column sep=2em,
text height=2.0ex, text depth=0.50ex]
{&\overline{\mathrm{M}}_{0,12}&\\
\overline{\mathrm{M}}_{0,\mathbf{\frac{1}{6}}+\epsilon}&&\overline{\Gamma\backslash\mathbb{B}}^{\tor}\\
&\overline{\Gamma\backslash\mathbb{B}}^{\bb},&\\};
\path[->] (a-1-2) edge node[above left]{$\rho$}(a-2-1);
\path[->] (a-1-2) edge node[above right]{$\ell$}(a-2-3);
\path[->] (a-2-1) edge node[below left]{$\pi$}(a-3-2);
\path[->] (a-2-3) edge node[below right]{$\varphi$}(a-3-2);
\end{tikzpicture}
\end{center}
where we note that $\pi$ is the composition of the quotient by the $S_{12}$-action and a blow down. The above diagram commutes because the two compositions $\pi\circ\rho$ and $\varphi\circ\ell$ agree on the open subset $\mathrm{M}_{0,12}\subseteq\overline{\mathrm{M}}_{0,12}$, over which they are the quotient by the $S_{12}$-action.

Following the same strategy as for eight points, we construct a rational map $f\colon\overline{\mathrm{M}}_{0,\mathbf{\frac{1}{6}}+\epsilon}\dashrightarrow\overline{\Gamma\backslash\mathbb{B}}^{\tor}$. Let $U\subseteq\overline{\mathrm{M}}_{0,12}$ be the maximal dense open subset where $\rho$ is an isomorphism. Define
\[
f_1=\ell\circ\rho|_U^{-1}\colon \rho(U)\rightarrow\overline{\Gamma\backslash\mathbb{B}}^{\tor}.
\]
Moreover, define $f_2\colon\pi^{-1}(\Gamma\backslash\mathbb{B})\rightarrow\overline{\Gamma\backslash\mathbb{B}}^{\tor}$ to be the composition $\varphi|_{\Gamma\backslash\mathbb{B}}^{-1}\circ\pi|_{\pi^{-1}(\Gamma\backslash\mathbb{B})}$. If we take $P^0$ to be the union $\rho(U)\cup\pi^{-1}(\Gamma\backslash\mathbb{B})$, then we can define $f\colon P^0\rightarrow\overline{\Gamma\backslash\mathbb{B}}^{\tor}$ by gluing $f_1,f_2$ in the same way as we did for eight points.

Let $D$ be the exceptional locus of the morphism $\pi$. Notice that $D\cap P^0=D\cap\rho(U)\neq\emptyset$, so $P^0$ intersects $D$ in a dense open subset. This implies that
\[
\codim\left(\overline{\mathrm{M}}_{0, \mathbf{\frac{1}{6}} + \epsilon}\setminus P^0\right) \geq 2.
\]
Moreover, let $B$ be the exceptional locus of the morphism $\varphi$. Also in this case we have that $f(D\cap P^0)$ is a dense open subset of $B$ because $\ell$ is surjective, hence $f|_D\colon D\dashrightarrow B$ is a dominant map. The key fact now is that $P^0$ is an $S_{12}$-invariant subset, so there is an induced birational map $\overline{f}\colon\overline{\mathrm{M}}_{0,\mathbf{\frac{1}{6}}+\epsilon}/S_{12}\dashrightarrow\overline{\Gamma\backslash\mathbb{B}}^{\tor}$ which satisfies the assumptions of Lemma~\ref{extensionOpen}. So $\overline{f}$ restricts to an isomorphism between $\overline{\mathrm{M}}_{0,\mathbf{\frac{1}{6}}+\epsilon}/S_{12}$ and $\overline{\Gamma\backslash\mathbb{B}}^{\tor}$ away from closed subsets of codimension at least two.
\end{proof}

The next step is to show that the above isomorphisms away from closed subsets of codimension at least two extends to global isomorphisms in our cases of interest. This is nontrivial, and relies on the geometry of Hassett's moduli spaces and toroidal compactifications. Let us start by recalling some relevant definitions and facts.

\begin{definition}
Let $\phi\colon X \to S$ be a proper surjective morphism of varieties with $X$ 
normal. By $N_1(X/S)$ we denote the real vector space generated by numerical equivalence classes of reduced irreducible curves that are mapped to points in $S$ by $\phi$. Let
$\overline{\NE}(X/S)\subseteq N_1(X/S)$ be the 
closed convex cone generated by the classes of reduced irreducible curves and let $H\in\Pic(X)$. By Kleiman's criterion for ampleness, $H\in\Pic(X)$ is \emph{relatively ample} if and only if $H$ gives a positive function on $\overline{\NE}(X/S) \setminus \{ 0 \}$ (see \cite[Theorem 0-1-2]{KMM87}).
\end{definition}

\begin{definition}
Let $V \subseteq \mathbb{P}^n$  be a projective scheme  and $f_1, \ldots,f_s$  generators of its homogeneous ideal. The \emph{classical affine cone over $V$} is the variety $C_a(V) \subseteq \mathbb{A}^{n+1}$ defined by the equations of $V$.
The notions of affine cone can be generalized as follows, 
see \cite[\S3.1]{Kol13}.  Let $L$ be an ample line bundle on $V$. Then the \emph{affine cone over $V$ with conormal bundle $L$} is
\begin{align*}
    \label{eq:affineCone}
C_a(V,L)=\Spec_\mathbb{C}\left(\sum_{m\geq0}H^0(V,L^{\otimes m})\right).
\end{align*}
\end{definition}

\begin{proposition}
\label{pre-relativeamplenesshassettside}
Let $V$ be a smooth projective Fano variety and let $r$ be any positive rational number.  We denote the affine cone $C_a(V,-rK_V)$ simply as $C_r(V)$. Assume that 
\begin{enumerate}

\item $C_r(V)$ is $\mathbb{Q}$-Gorenstein, and

\item $\pi\colon X\rightarrow C_r(V)$ is a blow up at the vertex with $X$ smooth and exceptional divisor $E\cong V$.

\end{enumerate}
Then $-E$ is relatively ample.
\end{proposition}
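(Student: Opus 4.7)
My plan is to apply the relative version of Kleiman's criterion recalled just above the statement. Since $\pi$ is proper birational with exceptional locus equal to $E$, and $\pi(E)$ is the vertex, every irreducible curve contracted by $\pi$ lies in $E$; hence every class in $\overline{\NE}(X/C_r(V))$ is represented by a curve supported in $E$. Consequently $-E$ is $\pi$-ample if and only if $(-E)\cdot C>0$ for every irreducible curve $C\subset E$.

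The crux is therefore the identification of the conormal $\mathbb{Q}$-line bundle $\mathcal{O}_X(-E)\vert_E$ under the isomorphism $E\cong V$. The geometric prototype I have in mind is the cone over $\mathbb{P}^n$ with conormal bundle $\mathcal{O}(1)$: blowing up the vertex produces the total space of $\mathcal{O}_{\mathbb{P}^n}(-1)$, with $E=\mathbb{P}^n$ the zero section whose normal bundle is $\mathcal{O}_{\mathbb{P}^n}(-1)$. The same picture governs the general case in which the conormal polarization is an honest ample line bundle $L$: the blow up at the vertex is the total space of $L^{-1}$, so $\mathcal{N}_{E/X}=L^{-1}$ and $\mathcal{O}_X(-E)\vert_E=L$. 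In our situation $L=-rK_V$, and I would argue that, after passing to a sufficiently divisible Veronese re-embedding to make $-rK_V$ an honest line bundle, the same calculation gives
$$
\mathcal{O}_X(-E)\vert_E \;\equiv\; c\,(-rK_V)
$$
as $\mathbb{Q}$-line bundles on $V\cong E$ for some $c\in\mathbb{Q}_{>0}$. The $\mathbb{Q}$-Gorenstein hypothesis on $C_r(V)$ together with the smoothness of $X$ is exactly what is needed to give meaning to $\mathcal{O}_X(-E)$ as a $\mathbb{Q}$-Cartier divisor and to guarantee that the Veronese trick is compatible with the blow up.

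Granting this identification, the conclusion is immediate: for any irreducible curve $C\subset E\cong V$,
$$
(-E)\cdot C \;=\; \deg\bigl(\mathcal{O}_X(-E)\vert_C\bigr) \;=\; c\,(-rK_V)\cdot C \;>\;0,
$$
since $V$ is Fano and hence $-K_V$ is ample. Kleiman's criterion then delivers the $\pi$-relative ampleness of $-E$.

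The main obstacle is precisely the normal bundle identification in the $\mathbb{Q}$-Gorenstein rather than genuinely Cartier setting; once that is in place, the ampleness computation on curves in $E$ is a one-line consequence of $V$ being Fano, and the rest is a formal application of Kleiman's criterion.
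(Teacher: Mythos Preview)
Your overall strategy is sound and matches the paper's: both arguments reduce to showing that $-E|_E$ is ample on $E\cong V$, and both finish by invoking the Fano property of $V$. The difference lies entirely in how that key positivity is established.

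You attempt a direct identification $\mathcal{O}_X(-E)|_E \cong -rK_V$ by recognizing $X$ as the total space of $L^{-1}$ over $V$. This is correct when $L=-rK_V$ is an honest line bundle (which it is in every application in the paper), and in that case your constant is simply $c=1$. Your worry about the $\mathbb{Q}$-Cartier setting is misplaced: since $X$ is smooth, $E$ is automatically Cartier and $\mathcal{O}_X(-E)|_E$ is an honest line bundle; the only question is \emph{which} one. The ``Veronese trick'' you propose is therefore unnecessary, and as stated it does not work anyway --- passing to $C_a(V,mL)$ changes the cone and hence the blow-up, so it does not directly compute the normal bundle on the original $X$. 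What you actually need to justify is that the given blow-up (only assumed smooth with $E\cong V$) really is the total space of $L^{-1}$; you assert this but do not prove it.

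The paper avoids this geometric identification altogether. It uses adjunction $(K_X+E)|_E = K_E$ together with the discrepancy formula $K_X \sim_{\mathbb{Q}} \pi^*K_{C_r(V)} + \alpha E$ (this is where the $\mathbb{Q}$-Gorenstein hypothesis enters). Restricting to $E$ kills the pullback term and yields $-K_E \sim_{\mathbb{Q}} (1+\alpha)(-E|_E)$. The crucial positivity $1+\alpha>0$ is then a citation to \cite[Lemma~3.1(3)]{Kol13}, which gives the discrepancy bound for cone singularities directly. This route is more robust: it works for any resolution satisfying the stated hypotheses, without needing to pin down $X$ as a specific line-bundle total space, and it makes transparent exactly where each hypothesis is used.
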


\begin{proof}
Equivalently, we show that $-E|_E$ is ample. Let ``$\sim_\mathbb{Q}$" denote linear equivalence of $\mathbb{Q}$-divisors (that is, $D_1\sim_{\mathbb{Q}}D_2$ provided there exists a nonzero integer $m$ such that $mD_1$ and $mD_2$ are linearly equivalent). We have that $K_X \sim_\mathbb{Q}\varphi^*K_{C_r(V)}+\alpha E$. After adding $E$ to both sides and by using the projection formula, we obtain that
\begin{equation*}
K_X+E\sim_\mathbb{Q}\varphi^*K_{C_r(V)}+(1+\alpha)E\implies(K_X+E)|_E\sim_\mathbb{Q}(1+\alpha)E|_E\implies-K_E\sim_\mathbb{Q}(1+\alpha)(-E)|_E.
\end{equation*}
Notice that $1+\alpha>0$ by \cite[ Lemma 3.1 (3)]{Kol13} and $-K_E$ is ample because $E\cong V$, which is Fano. So $-E|_E$ is ample as claimed.
\end{proof}

\begin{proposition}\label{relativeamplenesshassettside}
For even $n$, let $K_1,\ldots, K_m$ be the exceptional divisors of the blow up
\[
\overline{\mathrm{M}}_{0,\mathbf{\frac{2}{n}} + \epsilon}
\rightarrow
(\mathbb{P}^1)^n/ \! /_{\mathbf{\frac{2}{n}}} \SL_2,
\]
and denote by $K$ their disjoint union. Then $-K$ is a relatively ample divisor.
\end{proposition}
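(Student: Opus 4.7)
The plan is to apply Proposition~\ref{pre-relativeamplenesshassettside} to each exceptional divisor $K_i$ separately, after identifying $(\mathbb{P}^1)^n/\!/_{\mathbf{\frac{2}{n}}}\SL_2$ étale-locally near each of its strictly semistable points with an affine cone over a Fano variety. The conclusion for $-K$ will then follow from the disjointness of the $K_i$.

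First I would analyze the local structure of the GIT quotient. Setting $k:=n/2$, a configuration is strictly semistable precisely when $k$ of the labeled points collide at one location of $\mathbb{P}^1$ and the remaining $k$ at a second location; each such polystable orbit has stabilizer equal to the maximal torus $\mathbb{G}_m\subseteq\SL_2$ and contributes a single point to the GIT quotient. By Luna's slice theorem, an étale neighborhood of such a point is the affine GIT quotient of the $2(k-1)$-dimensional normal slice by this $\mathbb{G}_m$. A direct character computation shows the $\mathbb{G}_m$ acts with $k-1$ weights equal to $+1$ and $k-1$ weights equal to $-1$, so the invariant ring is generated by the bilinear products $z_iw_j$ and is the homogeneous coordinate ring of $V:=\mathbb{P}^{k-2}\times\mathbb{P}^{k-2}$ under its Segre polarization $\mathcal{O}(1,1)$. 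Thus the GIT quotient is étale-locally the affine cone $C_a(V,\mathcal{O}(1,1))$ (consistent with Example~\ref{ex:8points} when $n=8$), and the reduction morphism is locally identified with the blow up of this cone at the vertex, with $K_i\cong V$ and ambient space $\overline{\mathrm{M}}_{0,\mathbf{\frac{2}{n}}+\epsilon}$ smooth by Hassett's theorem.

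To invoke Proposition~\ref{pre-relativeamplenesshassettside} I verify its hypotheses. Since $-K_V=\mathcal{O}(k-1,k-1)$ on the Fano $V$, we have $\mathcal{O}(1,1)=-\tfrac{1}{k-1}K_V$, so the cone is $C_r(V)$ for the positive rational number $r=\tfrac{1}{k-1}$. The $\mathbb{Q}$-Gorenstein property follows from the description of the cone as $\mathbb{G}_m$-invariants in a polynomial ring under a balanced action (equal numbers of $\pm 1$ weights), a standard class of quotient singularities that are $\mathbb{Q}$-Gorenstein. Proposition~\ref{pre-relativeamplenesshassettside} then gives $-K_i$ relatively ample for each $i$. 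Finally, because the $K_i$ lie over distinct points of the GIT quotient and are pairwise disjoint, every curve $C$ contracted by the reduction morphism lies in exactly one $K_i$, so $(-K)\cdot C=(-K_i)\cdot C>0$, and $-K$ is relatively ample by the Kleiman criterion recorded in the definition preceding Proposition~\ref{pre-relativeamplenesshassettside}. The step most liable to trip up the argument is the Luna slice computation: pinning down the $\mathbb{G}_m$-character on the normal slice precisely, since a miscount of weights would misidentify either $V$ or its conormal bundle and void the application of Proposition~\ref{pre-relativeamplenesshassettside}.
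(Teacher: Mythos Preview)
Your proposal is correct and follows essentially the same approach as the paper: both use Luna's slice theorem to identify the GIT quotient \'etale-locally at a strictly semistable point with the affine cone over the Segre variety $\mathbb{P}^{b-1}\times\mathbb{P}^{b-1}$ (your $k-2$ equals the paper's $b-1=(n-2)/2-1$), and then apply Proposition~\ref{pre-relativeamplenesshassettside}. The paper cites \cite{KLW87} and \cite{Muk03} for the slice computation and identification of the cone, while you sketch the weight count directly and add the explicit verification that $\mathcal{O}(1,1)=-\tfrac{1}{k-1}K_V$ together with the passage from each $-K_i$ to $-K$ via Kleiman; these are welcome details but not a different strategy.
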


\begin{proof}
By \cite[Theorem 1.1]{KM11}, we know that the morphism in the statement is the blow up at the strictly semi-stable points of the GIT quotient. So let $p=(p_1,\ldots,p_n)$ be one of these strictly semi-stable point configurations in $\mathbb{P}^1$. Notice that, up to permuting the indices, $p$ satisfies $p_1=\ldots=p_\frac{n}{2}\neq p_{\frac{n}{2}+1}=\ldots=p_n$. The idea is to apply Proposition~\ref{pre-relativeamplenesshassettside}, but to be able to do this we need a local description of the singularity at $p$. Such a description is provided by Luna's slice theorem \cite{Lun75}. 

More precisely, the completion $\widehat{\mathcal{O}}_{p}$ of the local ring at $p$ is isomorphic to the the completion of the local ring at the origin of the affine quotient $\mathcal{N}_p / \! / G_p$, where $G=\SL_2$ and $\mathcal{N}_p$ is the fiber at $p$ of the normal bundle to the orbit $G\cdot p$ (see \cite[\S1.1 and \S1.2]{Dre04}).
Notice that the stabilizer $G_p$ is isomorphic to $\mathbb{C}^*$.
By the discussion within the proofs
of \cite[Proposition 1.10 and Lemma 6.1]{KLW87}, it holds that 
\begin{align*}
\mathcal{N}_p / \! / G_p
\cong
\mathbb{C}^{n-2} / \! /_{\chi} \mathbb{C}^*,
\end{align*}
where $\chi$ is an appropriate character of $\mathbb{C}^*$ for which the $\mathbb{C}^*$-action is explicitly given by
\begin{align*}
t \cdot (u_1, \ldots u_{b}, v_1, \ldots v_{b})
=
(tu_1, \ldots tu_{b}, t^{-1}v_1, \ldots t^{-1}v_{b}),~b=\frac{n-2}{2}.
\end{align*}
This affine GIT quotient is studied in \cite[Theorem 6.27 (i)]{Muk03}.
The result in [loc. cit.] implies that our GIT quotient at $p$ is locally isomorphic to the affine cone over the Segre variety 
$\mathbb{P}^{b-1} \times \mathbb{P}^{b-1}\subseteq\mathbb{P}^{b^2-1}$. Our result now follows from Proposition~\ref{pre-relativeamplenesshassettside}.
\end{proof}

\begin{proposition}
\label{relativeamplenesstoroidalside}
Let $\overline{\mathcal{B}/\Gamma}^{\tor}$ be the unique toroidal compactification of a ball quotient $\mathcal{B}/\Gamma$. Let $E_1,\ldots,E_m$ be the irreducible exceptional divisors of $\overline{\mathcal{B}/\Gamma}^{\tor}\rightarrow\overline{\mathcal{B}/\Gamma}^{\bb}$, and denote by $E$ their disjoint union. Then $-E$ is a relatively ample divisor.
\end{proposition}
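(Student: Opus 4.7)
The strategy is to reduce the claim to showing that $-E|_{E_i}$ is ample on each irreducible component $E_i$, and then to invoke Kleiman's criterion (as formulated in the definition preceding the proposition). Because every boundary component of a complex ball is $0$-dimensional, the $E_i$ project to distinct isolated cusps of $\overline{\mathcal{B}/\Gamma}^{\bb}$ and are therefore pairwise disjoint; in particular $-E|_{E_i} = -E_i|_{E_i}$, and any reduced irreducible curve contracted by $\varphi$ is contained set-theoretically in some single $E_i$. Thus once $-E_i|_{E_i}$ is known to be ample, every $\varphi$-contracted curve $C$ satisfies $(-E)\cdot C = (-E_i|_{E_i})\cdot C > 0$, so $-E$ is positive on $\overline{\NE}(\overline{\mathcal{B}/\Gamma}^{\tor}/\overline{\mathcal{B}/\Gamma}^{\bb}) \setminus \{0\}$ and hence is relatively ample.

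To prove ampleness of $-E_i|_{E_i}$, I would unwind the construction of $\overline{\mathcal{B}/\Gamma}^{\tor}$ near the cusp $c_i = \varphi(E_i)$ following \cite{AMRT75}. Let $P_i \subseteq \Gamma$ be the parabolic stabilizer of $c_i$ and let $U_i$ be its unipotent radical. For a complex ball, $U_i$ is a real Heisenberg group with $1$-dimensional center $U_{i,c}$, so the admissible cone in $U_{i,c}(\mathbb{R})$ is a half-line and no choice of fan is required (this is the well-known canonicity of the toroidal compactification for ball quotients). Partial quotient by $U_{i,c}(\mathbb{Z})$ identifies a punctured neighborhood of $c_i$ in $\overline{\mathcal{B}/\Gamma}^{\tor}$ with an analytic neighborhood of the zero section of a line bundle $L_i^{-1}$ on an abelian variety $A_i$, with the zero section removed. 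The toroidal construction glues this zero section back in, giving $E_i = A_i/G_i$ for a finite group $G_i$, and by construction the normal bundle $N_{E_i/\overline{\mathcal{B}/\Gamma}^{\tor}}$ is the descent of $L_i^{-1}$ to $A_i/G_i$.

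The decisive point is that $L_i$ is ample on $A_i$: its first Chern class is given by the alternating form on $U_i/U_{i,c}$ induced by the commutator pairing $[\cdot,\cdot]\colon U_i \times U_i \to U_{i,c}$, which is positive-definite Hermitian relative to the complex structure inherited from the ball. Ampleness then descends to the finite quotient $A_i/G_i = E_i$, so $-E_i|_{E_i}$ is ample, and the reduction in the first paragraph completes the proof. The principal obstacle is establishing the positivity of this Hermitian form — that is, identifying $L_i$ as genuinely ample rather than merely nontrivial. This is the heart of the AMRT construction for ball quotients, and while it can be extracted from \cite{AMRT75} directly, one could alternatively verify it by a case-by-case analysis in the ancestral Deligne-Mostow cases and transfer the conclusion to the general situation; the uniform argument is cleaner.
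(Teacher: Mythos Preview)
Your proposal is correct and reaches the same conclusion, but the route differs from the paper's. Both arguments begin by reducing relative ampleness of $-E$ to ampleness of $-E|_E$ (equivalently, of each $-E_i|_{E_i}$). After that, you appeal directly to the AMRT construction: the normal bundle of $E_i$ is identified with the descent of $L_i^{-1}$, and $L_i$ is ample because its Chern class is the positive-definite commutator form on the Heisenberg group attached to the cusp. The paper instead passes to a finite-index neat subgroup $\Gamma_1 \leq \Gamma$ for which $K_{\overline{X}}$ is ample on $\overline{X}=\overline{\mathcal{B}/\Gamma_1}^{\tor}$ (invoking \cite{DD17}), observes that the boundary $F$ there is a disjoint union of abelian varieties so $K_F=0$, and concludes by adjunction that $-F|_F = K_{\overline{X}}|_F$ is ample; ampleness then descends along the finite cover by \cite[Corollary 1.2.28]{Laz04}. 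Your approach is more hands-on and self-contained modulo \cite{AMRT75}, while the paper's argument is shorter and sidesteps the explicit Heisenberg computation by importing the Di~Cerbo--Di~Cerbo result as a black box.
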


\begin{proof}
We show that $-E|_E$ is ample. Let $\Gamma_0\subseteq\Gamma$ be a finite index normal neat subgroup (notice that $\overline{\mathcal{B}/\Gamma_0}^{\tor}$ is automatically smooth by \cite[Theorem 7.26]{Nam80}). Let $\Gamma_1\subseteq\Gamma_0$ be a finite index subgroup such that $\overline{X}=\overline{\mathcal{B}/\Gamma_1}^{\tor}$ has $K_{\overline{X}}$ ample \cite[Theorem 1.3]{DD17}. Denote by $F$ the preimage of $E$ under the map $\overline{\mathcal{B}/\Gamma_1}^{\tor}\rightarrow\overline{\mathcal{B}/\Gamma}^{\tor}$. Then it suffices to prove that $-F|_F$ is ample by \cite[Corollary 1.2.28]{Laz04}. But $F$ is a disjoint union of abelian varieties, so by adjunction formula
\[
0=K_F=(K_{\overline{X}}+F)|_F\implies-F|_F=K_{\overline{X}}|_F,
\]
Since $K_{\overline{X}}|_F$ is ample, this proves what we want.
\end{proof}

\begin{lemma}[{\cite[Theorem 5.14]{Kov09}}]
\label{kovacs}
Let $S$ be a scheme and $f_i:X_i \to S$ two proper $S$-schemes, $\mathcal{L}_i$ relatively ample line bundles on 
$X_i/S$ and $j_i:U_i \to X_i$ open immersions with complement $Z_i = X_i \setminus U_i$ for $i = 1,2$.
Assume that
\begin{itemize}
    \item there exists an $S$-isomorphism 
    $\alpha:U_1/S \to U_2 /S$ such that 
    $\alpha^* \left( \mathcal{L}_2\right) = 
    \mathcal{L}_1, and
    $
    \item For $i=1,2$, $X_i$ is normal and $\codim(Z_i, X_i) \geq 2$.
\end{itemize}
Then $\alpha$ extends to $X_1$ to give an isomorphism 
$X_1/S \cong X_2/S$. 
\end{lemma}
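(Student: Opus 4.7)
The plan is to reconstruct each $X_i$ from its graded algebra of sections of powers of $\mathcal{L}_i$ via the relative $\Proj$ construction, and then use the codimension hypothesis to show this algebra is already determined by its restriction to $U_i$. Since $\mathcal{L}_i$ is relatively ample on $X_i/S$, after replacing $\mathcal{L}_i$ by a sufficiently divisible tensor power (which does not affect the conclusion), there is a canonical $S$-isomorphism
\[
X_i \;\cong\; \Proj_S\!\Bigl(\bigoplus_{n\ge 0}(f_i)_{*}\mathcal{L}_i^{\otimes n}\Bigr).
\]
Thus, to extend $\alpha$ to an $S$-isomorphism $X_1\cong X_2$, it suffices to produce an isomorphism of graded $\mathcal{O}_S$-algebras
\[
\bigoplus_{n\ge 0}(f_{2})_{*}\mathcal{L}_2^{\otimes n}\;\xrightarrow{\;\cong\;}\;\bigoplus_{n\ge 0}(f_{1})_{*}\mathcal{L}_1^{\otimes n}
\]
that induces $\alpha$ upon taking relative $\Proj_S$.

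The central step is the extension of sections across $Z_i$. Because $X_i$ is normal with $\codim(Z_i,X_i)\ge 2$, the scheme $X_i$ is $S_2$ along $Z_i$, and since each $\mathcal{L}_i^{\otimes n}$ is locally free (hence reflexive), the Hartogs-type extension $j_{i*}j_{i}^{*}\mathcal{L}_i^{\otimes n}=\mathcal{L}_i^{\otimes n}$ yields
\[
(f_i)_{*}\mathcal{L}_i^{\otimes n}\;=\;(f_i\circ j_i)_{*}\bigl(j_i^{*}\mathcal{L}_i^{\otimes n}\bigr)\qquad (n\ge 0).
\]
Restricting to $U_i$ and invoking the hypothesis $\alpha^{*}\mathcal{L}_2=\mathcal{L}_1$, one transports the graded algebra of sections on $U_2$ to that on $U_1$, and the identities above promote this to the desired graded isomorphism of the full section algebras on $X_1$ and $X_2$. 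Applying $\Proj_S$ then gives an $S$-isomorphism $X_1\cong X_2$ that restricts to $\alpha$ over $U_1$ by functoriality of the construction.

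The main obstacle I expect is verifying in the relative setting over an arbitrary base $S$ that the extension $(f_i)_{*}\mathcal{L}_i^{\otimes n}=(f_i\circ j_i)_{*}j_i^{*}\mathcal{L}_i^{\otimes n}$ is compatible both with the multiplication in the section algebra and with the natural maps across different $n$, so that one really obtains a graded $\mathcal{O}_S$-algebra isomorphism rather than merely an isomorphism of graded $\mathcal{O}_S$-modules in each degree. This is where the interaction of properness of $f_i$, normality of $X_i$, and the codimension-two condition all enter simultaneously; once this compatibility is checked, the $\Proj_S$ step is purely formal. As an alternative, one could argue that the rational map $\alpha\colon X_1\dashrightarrow X_2$ is defined on the complement of a subset of codimension $\ge 2$ and use the relative ampleness of $\mathcal{L}_2$ together with the identification $\alpha^{*}\mathcal{L}_2=\mathcal{L}_1$ on $U_1$ to extend it via sections of suitable powers of $\mathcal{L}_1$, but this ultimately amounts to the same Proj-theoretic argument.
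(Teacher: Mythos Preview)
The paper does not give its own proof of this lemma; it is quoted verbatim from \cite[Theorem~5.14]{Kov09} and used as a black box. Your proposal is correct and is exactly the standard argument (and the one in Kov\'acs's reference): recover each $X_i$ as $\Proj_S$ of the section algebra $\bigoplus_{n\geq 0}(f_i)_*\mathcal{L}_i^{\otimes n}$, use normality and the codimension-$2$ hypothesis to identify $(f_i)_*\mathcal{L}_i^{\otimes n}$ with $(f_i\circ j_i)_*j_i^*\mathcal{L}_i^{\otimes n}$, and then transport the graded algebra via $\alpha$. The compatibility with multiplication that you flag as a concern is automatic, since the restriction maps $j_i^*$ are ring homomorphisms and the Hartogs extension $j_{i*}j_i^*\mathcal{L}_i^{\otimes n}\cong\mathcal{L}_i^{\otimes n}$ is functorial; there is nothing further to check.
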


\begin{proposition}
\label{globalextensionresult}
The rational maps $\overline{\mathrm{M}}_{0,\mathbf{\frac{1}{4}}+\epsilon}
\dashrightarrow 
\overline{
\Gamma_{\mathbf{\frac{1}{4}}} \backslash
\cB_5}^{\tor}$ and  $\overline{\mathrm{M}}_{0,\mathbf{\frac{1}{6}}+\epsilon}/S_{12}
\dashrightarrow 
\overline{
\Gamma_{\mathbf{\frac{1}{6}}}
\backslash
\cB_{9}}^{\tor}$ extend to isomorphisms.
\end{proposition}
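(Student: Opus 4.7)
The strategy is to upgrade the open isomorphism of Proposition~\ref{lemma:OpenExt} to a global one by applying Lemma~\ref{kovacs} (Kovács). To this end, I take the common base to be $S:=\overline{\Gamma\backslash\mathbb{B}}^{\bb}$ (with the appropriate $\Gamma$ in each of the two ancestral cases), and view both sides as proper $S$-schemes via the blow-down morphisms $\pi$ (on the Hassett side) and $\varphi$ (on the toroidal side) appearing in the commutative diagrams of the proof of Proposition~\ref{lemma:OpenExt}. Note that in the $12$-point case one passes through $\overline{\mathrm{M}}_{0,\mathbf{\frac{1}{6}}+\epsilon}/S_{12}$, which is normal as a finite quotient of a smooth variety; the toroidal compactification is normal as well. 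Thus both $X_1$ (the Hassett quotient) and $X_2$ (the toroidal compactification) satisfy the normality hypothesis of Lemma~\ref{kovacs}.

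The next step is to produce relatively ample line bundles that correspond under the open isomorphism. On the Hassett side, Proposition~\ref{relativeamplenesshassettside} (applied with $n=8$ and $n=12$) gives that $-K$ is $\pi$-ample, where $K$ is the reduced exceptional divisor of $\pi$ over the strictly semistable points of the GIT quotient. On the toroidal side, Proposition~\ref{relativeamplenesstoroidalside} gives that $-E$ is $\varphi$-ample, where $E$ is the reduced exceptional divisor of $\varphi$ over the cusps of the Baily-Borel compactification. Set $\mathcal{L}_1:=\mathcal{O}_{X_1}(-K)|_{U_1}$ and $\mathcal{L}_2:=\mathcal{O}_{X_2}(-E)|_{U_2}$, where $U_1$ and $U_2$ are the open subsets of Proposition~\ref{lemma:OpenExt} (with complements of codimension $\geq 2$).

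The remaining verification is that the isomorphism $\alpha\colon U_1\xrightarrow{\sim} U_2$ produced in Proposition~\ref{lemma:OpenExt} is an $S$-isomorphism and satisfies $\alpha^*\mathcal{L}_2=\mathcal{L}_1$. The first assertion follows immediately from the commutativity of the diagrams $\pi\circ\rho=\varphi\circ\ell$ established in that proof, together with the fact that $\alpha$ was built by gluing $f_1=\ell\circ\rho|_U^{-1}$ with $f_2=\varphi^{-1}\circ\pi$, both of which are manifestly over $S$. For the second assertion, recall from the proof of Proposition~\ref{lemma:OpenExt} that $\alpha$ sends a dense open subset of the exceptional divisor $K$ of $\pi$ onto a dense open subset of the exceptional divisor $E$ of $\varphi$ (this is exactly how the codimension-$2$ complement was controlled). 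Since $K|_{U_1}$ and $E|_{U_2}$ are the reduced effective Cartier divisors cut out by the $\pi$-, respectively $\varphi$-, exceptional loci inside the open sets $U_i$, and $\alpha$ identifies their supports as $S$-schemes, one has $\alpha^{-1}(E|_{U_2})=K|_{U_1}$ as Cartier divisors and hence $\alpha^*\mathcal{L}_2=\mathcal{L}_1$.

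With these ingredients in place, Lemma~\ref{kovacs} applies and extends $\alpha$ to a global $S$-isomorphism $X_1\cong X_2$, giving the two desired isomorphisms
$\overline{\mathrm{M}}_{0,\mathbf{\frac{1}{4}}+\epsilon}\xrightarrow{\sim}\overline{\Gamma_{\mathbf{\frac{1}{4}}}\backslash\cB_5}^{\tor}$ and $\overline{\mathrm{M}}_{0,\mathbf{\frac{1}{6}}+\epsilon}/S_{12}\xrightarrow{\sim}\overline{\Gamma_{\mathbf{\frac{1}{6}}}\backslash\cB_9}^{\tor}$. I expect the only nontrivial step to be the verification that $\alpha^{-1}(E|_{U_2})=K|_{U_1}$ as Cartier divisors rather than merely as sets; this requires inspecting the construction of $\alpha$ from Proposition~\ref{lemma:OpenExt} to confirm that no unexpected multiplicity appears, but since both $K$ and $E$ are reduced exceptional divisors of birational morphisms between normal varieties whose contractions agree under $\alpha$, this is automatic.
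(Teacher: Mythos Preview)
Your approach is essentially the paper's: invoke Proposition~\ref{lemma:OpenExt} for the codimension-two isomorphism, use Propositions~\ref{relativeamplenesshassettside} and~\ref{relativeamplenesstoroidalside} to produce the relatively ample line bundles, check that $\alpha$ matches them, and apply Lemma~\ref{kovacs}.

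There is one small gap in the $n=12$ case. Proposition~\ref{relativeamplenesshassettside} is stated for the morphism $\overline{\mathrm{M}}_{0,\mathbf{\frac{1}{6}}+\epsilon}\to(\mathbb{P}^1)^{12}/\!/_{\mathbf{\frac{1}{6}}}\SL_2$, not for its $S_{12}$-quotient $\overline{\mathrm{M}}_{0,\mathbf{\frac{1}{6}}+\epsilon}/S_{12}\to\overline{\Gamma_{\mathbf{\frac{1}{6}}}\backslash\cB_9}^{\bb}$, so you cannot apply it ``with $n=12$'' to get $-K$ relatively ample on $X_1$ directly. The paper handles this by letting $E$ be the exceptional divisor upstairs (where Proposition~\ref{relativeamplenesshassettside} does apply), observing that $-E|_E$ is then ample, and then using \cite[Corollary~1.2.28]{Laz04} to descend: if $H$ denotes the image of $E$ in the $S_{12}$-quotient, the finite surjection $E\to H$ forces $-H|_H$ to be ample, hence $-H$ is relatively ample over the base. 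You should insert this short descent step; otherwise the argument is complete and matches the paper.
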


\begin{proof}
We first discuss the case $n=8$. By Proposition~\ref{lemma:OpenExt} the rational map $\overline{\mathrm{M}}_{0,\mathbf{\frac{1}{4}}+\epsilon}\dashrightarrow\overline{\Gamma_{\mathbf{\frac{1}{4}}}\backslash\cB_5}^{\tor}$ extends to an isomorphism $\alpha$ away from closed subsets of codimension at least two. Consider the two blow ups
\begin{align*}
\overline{\mathrm{M}}_{0,\mathbf{\frac{1}{4}}+\epsilon}
\longrightarrow 
\overline{
\Gamma_{\mathbf{\frac{1}{4}}}
\backslash \cB_5
}^{\bb}
& &
\overline{
\Gamma_{\mathbf{\frac{1}{4}}}
\backslash
\cB_5
}^{\tor}
\longrightarrow
\overline{ \Gamma_{\mathbf{\frac{1}{4}}}
\backslash
\cB_5
}^{\bb}.
\end{align*}
By Propositions~\ref{relativeamplenesshassettside} and \ref{relativeamplenesstoroidalside}, negative the exceptional divisors of these blow ups correspond to relatively ample line bundles $\mathcal{L}_1,\mathcal{L}_2$ on the Hassett moduli space and the toroidal compactification respectively. Finally, observe that $\alpha^*(\mathcal L_2)=\mathcal L_1$ because $\alpha$ sends the exceptional divisors in Hassett's compactification to the respective ones in the toroidal compactification. Now, all the hypotheses of Lemma~\ref{kovacs} are satisfied, so the isomorphism follows.

A similar argument hold for $n=12$, but with the following change. Let $E$ be the disjoint union of the exceptional divisors of $\overline{\mathrm{M}}_{0,\mathbf{\frac{1}{6}}+\epsilon}\rightarrow\overline{
\Gamma_{\mathbf{\frac{1}{6}}}  \backslash 
\cB_{9}
}^{\bb}$ 
and denote by $H$ the quotient of $E$ by $S_{12}$. We know that $-E$ is relatively ample by Proposition~\ref{relativeamplenesshassettside}, hence $-E|_E$ is ample. This implies that $-H|_H$ is ample by \cite[Corollary 1.2.28]{Laz04}, hence $-H$ is relatively ample.
\end{proof}


\subsection{Extension to the remaining Deligne-Mostow cases}
\label{sec:OtherDMCases}

The following result allows us to reduce the proof of Theorem~\ref{maintheorem} to the Gaussian and Eisenstein cases $n=8$ and $n=12$ with symmetric weights, that have just been covered in \S\ref{prooftheoremintwoancestralcases}.

\begin{lemma}[{\cite[Lemma 5.4]{YZ18}}]
\label{lemma:extSubcase}
Let $f_1\colon Z_1 \rightarrow Y$ and 
$f_2\colon Z_2 \rightarrow Y$ be finite morphisms between irreducible algebraic varieties. Suppose $Z_1$, $Z_2$ are normal. Moreover, assume there exist Zariski open subsets $U_i\subseteq Z_i$, $i = 1,2$, with a biholomorphic map 
$g\colon U_1 \rightarrow U_2$ such that 
$f_1 = f_2  \circ g$. Then $g$ extends to an algebraic isomorphism $Z_1 \rightarrow Z_2$.
\end{lemma}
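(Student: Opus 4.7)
The plan is to construct the extension of $g$ via the graph construction together with Zariski's Main Theorem, exploiting normality of $Z_1, Z_2$ and finiteness of $f_1, f_2$.

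First I would form the fiber product $P := Z_1 \times_Y Z_2$ and consider the graph $\Gamma_g \subseteq U_1 \times U_2$ of the biholomorphism $g$. Since $f_1 = f_2 \circ g$ on $U_1$, the graph factors through the fiber product $U_1 \times_Y U_2 \subseteq P$, and the map $U_1 \to \Gamma_g$, $u_1 \mapsto (u_1, g(u_1))$, is a biholomorphism; in particular $\Gamma_g$ is irreducible of the same dimension as $Z_1$. Take its Zariski closure $W := \overline{\Gamma_g} \subseteq P$, which is an irreducible closed subvariety of $P$, and let $\widetilde W \to W$ be its normalization.

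Next I would study the two projections $\pi_i \colon W \to Z_i$ induced from $P$. Because $f_1$ and $f_2$ are finite, so are the projections $P \to Z_1$ and $P \to Z_2$, hence their restrictions $\pi_1$ and $\pi_2$ are finite morphisms. The projection $\pi_1$ is an isomorphism onto $U_1$ over $\Gamma_g$ and is dominant onto $Z_1$ (the image contains the dense subset $U_1$ and is closed by finiteness), so $\pi_1$ is a finite, surjective, birational morphism to the normal variety $Z_1$. By Zariski's Main Theorem in the form stating that a finite birational morphism onto a normal variety is an isomorphism, $\pi_1$ is an isomorphism. The identical argument applied to $\pi_2$, using normality of $Z_2$, gives that $\pi_2$ is also an isomorphism.

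Finally, the composition $\pi_2 \circ \pi_1^{-1} \colon Z_1 \to Z_2$ is an algebraic isomorphism which, restricted to $U_1$, recovers $g$ by construction of $\Gamma_g$; commutativity $f_1 = f_2 \circ (\pi_2 \circ \pi_1^{-1})$ is automatic since the analogous identity holds on the dense subset $U_1$ and both sides are morphisms into the separated scheme $Y$. I do not expect a serious obstacle: the only subtle points are (i) verifying that the analytic map $g$ indeed produces an algebraic subvariety $W$ via Zariski closure — this is automatic because $\Gamma_g$ is a constructible subset of the algebraic variety $P$ since it can equivalently be described as a connected component of $U_1 \times_Y U_2 \to U_1$ cut out algebraically — and (ii) confirming the birationality of $\pi_1$, which reduces to injectivity on $\Gamma_g$ and is immediate from the graph description.
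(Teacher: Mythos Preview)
The paper does not give its own proof of this lemma; it is simply quoted from \cite[Lemma 5.4]{YZ18}. So there is nothing in the paper to compare your argument against.

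Your argument is essentially correct and follows the standard route via Zariski's Main Theorem. Two small points are worth tightening. First, you introduce the normalization $\widetilde W\to W$ and then never use it; it can be deleted. Second, in point (i) you say $\Gamma_g$ is ``a connected component of $U_1\times_Y U_2$''; the correct word is \emph{irreducible} component. The reasoning is that $\Gamma_g$ is a closed irreducible analytic subset of $U_1\times_Y U_2$ of dimension $\dim U_1$, while the projection $U_1\times_Y U_2\to U_1$ is finite (base change of the finite $f_2$), so every algebraic irreducible component of $U_1\times_Y U_2$ has dimension at most $\dim U_1$; hence $\Gamma_g$ coincides with one of them, since for a variety over $\mathbb{C}$ the analytic and algebraic irreducible components agree. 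With that correction, the Zariski closure $W$ is genuinely algebraic and $\pi_1\colon W\to Z_1$ is birational, so your application of Zariski's Main Theorem goes through.
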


\begin{proof}[Proof of Theorem~\ref{maintheorem}]
We already proved the result for the two ancestral cases. Next, we consider one of the cases with $h$ points and weights $\mathbf{w}$ contained in one of the two ancestral cases with $n$ points and weights $\mathbf{ \frac{2}{n}}$.

By allowing points to collide, we can view $\overline{\mathrm{M}}_{\mathbf{w}+\epsilon}$ as a sub-moduli space of $\overline{\mathrm{M}}_{0,\mathbf{\frac{2}{n}}+\epsilon}$. This could be done in different ways, so we fix a choice. According to this choice, identify $(( \mathbb{P}^1)^{h})^{s}/\!/_{ \mathbf{w}}\SL_2$ as a sub-quotient of $(( \mathbb{P}^1)^{n})^{s}/\!/_{ \mathbf{\frac{2}{n}}}\SL_2$. Let $G$ (respectively, $K$) be the finite group giving the Deligne-Mostow isomorphism $((( \mathbb{P}^1)^{h})^{s}/\!/_{ \mathbf{w}}\SL_2)/G\cong \Gamma_{\mathbf{w}}\backslash\mathbb{B}_{h-3}$ (respectively, $((( \mathbb{P}^1)^{n})^{s}/\!/_{ \mathbf{\frac{2}{n}}}\SL_2)/K\cong \Gamma_{\mathbf{w}}\backslash\mathbb{B}_{n-3}$). Recall that $G$ is the trivial group if $\mathbf{w}$ satisfies INT, or $G=S_m$ for some positive integer $m\leq n$ if $\mathbf{w}$ satisfies $\Sigma$INT. For the same reason, $K$ is trivial for $n=8$ and $K=S_{12}$ for $n=12$. Notice that the inclusion $\overline{\mathrm{M}}_{\mathbf{w}+\epsilon}\subseteq\overline{\mathrm{M}}_{0,\mathbf{\frac{2}{n}}+\epsilon}$ induces a finite morphism $\overline{\mathrm{M}}_{\mathbf{w}+\epsilon}/G\rightarrow\overline{\mathrm{M}}_{0,\mathbf{\frac{2}{n}}+\epsilon}/K$. Then we have the following commutative diagram:
\begin{center}
\begin{tikzpicture}[>=angle 90]
\matrix(a)[matrix of math nodes,
row sep=2.5em, column sep=4em,
text height=2.0ex, text depth=1.0ex]
{((( \mathbb{P}^1)^{h})^{s}/\!/_{ \mathbf{w}}\SL_2)/G&\overline{\mathrm{M}}_{0,\mathbf{w}+\epsilon}/G&\overline{\mathrm{M}}_{0,\mathbf{\frac{2}{n}}+\epsilon}/K\\
\Gamma_{\mathbf{w}} \backslash \cB_{h-3}&
\overline{\Gamma_{\mathbf{w}}\backslash\cB_{h-3}}^{\tor}&\overline{\Gamma_{\mathbf{\frac{2}{n}}}\backslash\cB_{n-3}}^{\tor},\\};
\path[right hook->] (a-1-1) edge node[]{}(a-1-2);
\path[->] (a-1-2) edge node[]{}(a-1-3);
\path[->] (a-1-1) edge node[left]{$\cong$}(a-2-1);
\path[dashed,->] (a-1-2) edge node[]{}(a-2-2);
\path[->] (a-1-3) edge node[right]{$\cong$}(a-2-3);
\path[right hook->] (a-2-1) edge node[]{}(a-2-2);
\path[->] (a-2-2) edge node[]{}(a-2-3);
\end{tikzpicture}
\end{center}
where the lower-right map between toroidal compactifications is a finite morphism which comes from combining \cite[Definition 13 and Theorem 5]{Dor04a} and \cite[Proposition 3.4]{Har89}.
In conclusion, Lemma~\ref{lemma:extSubcase} implies that the middle vertical arrow extends, so Theorem \ref{maintheorem} follows for all the cases contained within the ancestral ones (see Eisenstein and Gaussian cases in Tables~\ref{table:DMcasesEisenstein} and \ref{table:DMcasesGaussian} respectively).
\end{proof}


\section{Hodge theoretic interpretation for eight points in the line} \label{sec:Matt}

The purpose of the following section is to give a precise mixed-Hodge-theoretic description of the extension of the period map associated to 8 points in $\mathbb{P}^1$.  That is, we show how to reinterpret the isomorphism $\overline{\mathrm{M}}_{0,\mathbf{\frac{1}{4}}+\epsilon}\overset{\cong}{\longrightarrow}
\overline{\Gamma_{\mathbf{\frac{1}{4}}} \backslash\cB_5}^{\tor}$
as an extended global Torelli result, matching the geometrically-modular description of the source with the description of the target in terms of equivalence-classes of mixed Hodge structures.  

Throughout, we shall denote field extensions by subscripts, viz. $V_{\mu,\mathbb{C}}$ rather than $V_{\mu}\otimes \mathbb{C}$.

\subsection{Period map}\label{sec:Matt1}
Writing $\mathcal{M}:=\mathrm{M}_{0,8}=\left\{\underline{x}=(x_1,\ldots,x_8)\in (\mathbb{P}^1)^8\setminus\cup\{x_i=x_j\}\right\}/\mathrm{PGL}_2(\mathbb{C})$, consider the family $\mathcal{C} \overset{\pi}{\to} \mathcal{M}$ of genus-nine curves, with fiber
$$
C_{\mu}:=\pi^{-1}(\mu)\cong \overline{\{y^4=\Pi_{j=1}^8 (x-x_j)\}}\subseteq \mathbb{WP}[1:1:2]
$$
over $\mu=[\underline{x}] \in \mathcal{M}$.  This family has an automorphism by $\rho\colon y\mapsto iy$; and denoting the quotient of $C_{\mu}$ by $\rho^2$ by
$$
B_{\mu}:=\overline{\lbrace \tilde{y}^2 =\Pi_{j=1}^8 (x-x_j) \rbrace} \subseteq \mathbb{WP}[1:1:4],
$$
we have the decomposition
\begin{align*}
H^1 \left(C_{\mu}, \mathbb{Q} \right)    
\cong
H^1 \left(B_{\mu}, \mathbb{Q} \right)    
\oplus 
H^1 \left( C_{\mu}, \mathbb{Q}\right)^{-}    
\end{align*}
into $\pm 1$-eigenspaces for $\rho^2$.  Setting $\tilde{V}_{\mu}:= H^1 \left( C_{\mu}, \mathbb{Q}\right)^{-}$, we have the further decomposition
$$
\tilde{V}_{\mu,\mathbb{Q}(i)} \cong V_{\mu} \oplus \overline{V}_{\mu}
$$
into $\pm i$-eigenspaces for $\rho$, so that $\tilde{V}_{\mu}\cong \mathrm{Res}_{\mathbb{Q}(i)/\mathbb{Q}}V_{\mu}$.
One easily checks that the Hodge numbers 
$\left( h^{0,1}, h^{1,0} \right)$
of the Hodge structures
$H^1 \left(B_{\mu}\right)    $, $V_{\mu}$, and $ \overline{V}_{\mu}$ are 
$(3,3)$, $(5,1)$, and $(1,5)$ respectively; 
in particular, $V_{\mu}^{1,0} \subseteq V_{\mu,\mathbb{C}}$ is spanned by  $\omega = \frac{dx}{y}$.

By \cite[Proposition 2.2]{MT04} and its preliminary discussion, the $\rho^2$-anti-invariant part of homology is 
$
H_1(C_{\mu}, \mathbb{Z})^{-}
\cong
\mathbb{Z}^{\oplus 12},
$
with generators 
$\{\textsc{a}_1, \ldots, \textsc{a}_6, \textsc{b}_1, \ldots, \textsc{b}_6 \}$
where 
$\textsc{b}_j = \rho_{*}(\textsc{a}_j)$ and 
$-\textsc{a}_j=\rho_{*}(\textsc{b}_j)$.
The Poincar\'e duality isomorphism 
$\textsc{P.D.}\colon H_1(C_{\mu}, \mathbb{Z})\overset{\cong}{\to}H^1(C_{\mu},\mathbb{Z})$ (defined by $\gamma\mapsto\delta_{\gamma}:=$ current of integration over $\gamma$) sends
$H_1(C_{\mu},\mathbb{Z})^{-}\overset{\cong}{\to} H^1(C_{\mu}, \mathbb{Z})^{-} :=
H^1 \left( C_{\mu}, \mathbb{Q} \right)^{-} \cap H^1(C_{\mu}, \mathbb{Z})$.
So we obtain identifications of 
\begin{align*}
\widetilde{V}_{\mu, \mathbb{Z}}
:=
H^1(C_{\mu}, \mathbb{Z})^{-}
=
 \text{Res}_{\mathbb{Z}[i]/\mathbb{Z}}
 \left(
 V_{\mu, \mathbb{Z}}
 \right),
\end{align*}
where the last equality defines $ V_{\mu, \mathbb{Z}}$, with
\begin{align*}
\mathbb{Z}^{\oplus 12}
=
 \text{Res}_{\mathbb{Z}[i]/\mathbb{Z}}
 \left(
\mathbb{Z}[i]^{\oplus 6}
 \right).
\end{align*}
In particular, a $\mathbb{Z}[i]$-basis of 
$ V_{\mu, \mathbb{Z}}$ is given by 
$\{\textsc{a}_1, \ldots, \textsc{a}_6 \}$ (i.e. $\delta_{\textsc{a}_1},\ldots,\delta_{\textsc{a}_6}$). 
The action of $\rho^*$ on $ V_{\mu, \mathbb{Z}}$ (and on $\tilde{V}_{\mu,\mathbb{Z}}$) is identified with  multiplication by $(-i)$ on $\mathbb{Z}[i]^{\oplus 6}$, and so 
$\textsc{b}_j$ identifies with $i\textsc{a}_j$.

Now let $\tilde{\mathcal{V}} \to \mathcal{M}$ be the vector bundle with fibers $\tilde{V}_{\mu,\mathbb{C}}$, and
$\widetilde{\mathbb{V}}$ [resp. $\mathbb{V}$] be the 
$\mathbb{Z}$- [resp. $\mathbb{Z}[i]$-] local system with fibers $\tilde{V}_{\mu,\mathbb{Z}}$ [resp. $V_{\mu,\mathbb{Z}}$] and intersection pairing
$Q\colon \tilde{\mathbb{V}} \otimes \tilde{\mathbb{V}}\to \mathbb{Z}$.
Writing $\tilde{V} (\cong \mathbb{Q}^{\oplus 12})$ [resp. $V(\cong \mathbb{Q}(i))$] for a fixed fiber of $\tilde{\mathbb{V}}_{\mathbb{Q}}$ [resp. $\mathbb{V}_{\mathbb{Q}(i)}^{\oplus 6}$], the $\mathbb{Z}$-VHS
$(\tilde{\mathcal{V}}, F^{\bullet} \tilde{\mathcal{V}},\tilde{\mathbb{V}},\nabla,Q)$
has Hodge group 
$$
G = \text{Res}_{\mathbb{Q}(i)/\mathbb{Q}} \left(\mathrm{GL}(V) \right)
\cap
\mathrm{Sp}(\widetilde V, Q),
$$
which is a $\mathbb{Q}$-form of $U(5,1)$.

\begin{definition}
We take
\begin{align*}
\Phi\colon \mathcal{M} \longrightarrow   
\Gamma \backslash G(\mathbb{R})/G^0(\mathbb{R})   
\; \cong \;
\Gamma \backslash 
U(5,1)
/ \left(U(5) \times U(1)  \right) 
\; \cong \;
\Gamma \backslash 
\mathbb{B}_5
\end{align*}
to be the period map associated to this $\mathbb{Z}$-VHS, where $\Gamma \leq G(\mathbb{Z})$ is the monodromy group of $\widetilde{\mathbb{V}}$.  Note that this is just a detailed description of $\Phi_{\mathbf{\frac{1}{4}}}$ from $\S$\ref{s1.1}.\end{definition}

There are two important reinterpretations of this ``$\rho^2$-anti-invariant'' period map. First, we may view 
$\Phi(\mu)$ as a $\Gamma$-equivalence-class of polarized $\mathbb{Z}[i]$-Hodge structures on 
$V = \mathbb{Q}[i]^{\oplus6}$:  the underlying lattice is just $V_{\mathbb{Z}} = \mathbb{Z}[i]^{\oplus 6}$; and by making the identification $V_{\mathbb{Z}} \overset{\cong}{\to} V_{\mu,\mathbb{Z}}$ via $\{\textsc{a}_j\}$ as above (well-defined up to $\Gamma$), we obtain a Hodge flag
$F^{\bullet}_{\mu} V_{\mathbb{C}}$ by pulling back the line $F^{1}V_{\mu,\mathbb{C}}= V^{1,0}_{\mu}
\subset V_{\mu, \mathbb{C}}
$. 
The polarization is given by the $\mathbb{Z}[i]$-Hermitian form $h:V_{\mathbb{Z}} \times V_{\mathbb{Z}} \longrightarrow \mathbb{Z}[i]$ defined (using the identification 
$V_{\mathbb{Z}} \overset{\cong}{\to} \tilde{V}_{\mathbb{Z}} \cong \mathbb{Z}^{\oplus 12}$ via $v \mapsto \tilde v$) by
$$
h(v,w):= Q(\tilde{v}, \rho \tilde{w})- i Q(\tilde{v}, \tilde{w}),
$$
with signature $(5,1)$ and associated matrix 
\begin{align*}
[h]=\begin{pmatrix}
-2& 1-i & &0 & 0 & 0 \\
1+i & -2  & 1-i & 0 & 0 & 0  \\
0 & 1+i & -2  & 1-i & 0 & 0  \\
0 & 0 & 1+i & -2  & 1-i & 0\\
0 & 0 & 0 & 1+i & -2  & 1-i  \\
0 & 0 & 0 & 0 & 1+i & -2   \\
\end{pmatrix}.   
\end{align*}
By \cite[Thm 4.1]{MY93}, $\Gamma$ is precisely the arithmetic subgroup
$$
U(h; (1-i))
:=
\left\lbrace
g \in \mathrm{GL}_6(\mathbb{Z}[i]) \; | \;
{}^t \bar{g}[h]g=[h] ,\; g \equiv \mathrm{I}_6 \; \text{mod}\; (1-i)
\right\rbrace
$$
of $G(\mathbb{Q})$ associated to the ideal $(1-i)\subseteq \mathbb{Z}[i]$.

A second interpretation of $\Phi( \mu )$ is as the modulus of the abelian variety
$$
J(C_{\mu})^{-}:=\Ext^{1}_{\mathrm{MHS}}( \mathbb{Z}, \tilde{V}_{\mu,\mathbb{Z}}(1))\cong\frac{(\tilde{V}_{\mu}^{1,0})^{\vee}}{H_1\left( C_{\mu}, \mathbb{Z} \right)^{-}},
$$
which is naturally a sub-abelian variety of the Jacobian $$
J(C_{\mu}):=\Ext^{1}_{\mathrm{MHS}}\left( \mathbb{Z}, H^1\left(C_{\mu}, \mathbb{Z}(1) \right)\right).
$$
\begin{remark}
In \cite[Sec 2]{MT04}, $J(C_{\mu})^{-}$ is called the ``Prym variety'', although $C_{\mu}\overset{2:1}{\twoheadrightarrow} B_{\mu}$ is not \'etale.  We remark that in its definition, one is not dividing out by \emph{all} periods of the $\rho^2$-anti-invariant holomorphic 1-forms, but only by the finite-index sublattice of periods over $\rho^2$-anti-invariant cycles.
\end{remark}

\subsection{Geometric boundary}\label{sec:Matt2}
By a \emph{codimension-one stratum} in a compactification $\bar{X}\supseteq X$ we shall mean the open part $\mathcal{S}$ of an irreducible boundary divisor $\bar{\mathcal{S}}$; that is, $\mathcal{S}=\bar{\mathcal{S}}\setminus \{\bar{\mathcal{S}}\cap \mathrm{sing}(\bar{X}\setminus X)\}$.  In the KSBA compactification $\overline{\mathcal{M}}^{\mathrm{KSBA}}:=\overline{\mathrm{M}}_{0, \mathbf{\frac{1}{4}} + \epsilon}$ of $\mathcal{M}$, there are two types of codimension-one strata:
\begin{itemize}
    \item[(A)] those arising from a collision $x_i=x_j$ ($i\neq j$), and parametrizing (a $\mathbb{P}^1$ with) seven ordered points with one of multiplicity two; and
    \item[(B)] those parametrizing two copies of $\mathbb{P}^1$ glued at a point $p$, with four ordered points on each component $\mathbb{P}^1\setminus\{p\}$.
\end{itemize}
There are $\binom{8}{2}=28$ strata of type (A) and $\binom{8}{4}/2=35$ strata of type (B).  The closure of each type (B) stratum arises as the ``KSBA replacement'' of a pair of colliding 4-tuples; and taken together, they constitute the exceptional divisor of the morphism $\overline{\mathrm{M}}_{0,\mathbf{\frac{1}{4}}+\epsilon} \to \overline{\left(\Gamma \backslash \mathbb{B}_5\right)}^{\mathrm{bb}}$.  

The VHS $\tilde{\mathcal{V}}$ degenerates along each of these strata, and we write $T=e^N T_{\mathrm{ss}}$ for the Jordan decomposition of the local monodromy operator into unipotent and (finite) semi-stable parts.  In this subsection, we shall describe the LMHSs (limiting mixed Hodge structures) along both types of components, together with the action on them of $T_{\mathrm{ss}}$, $N$ and $\rho$.  This is done by computing the decomposition $$\sigma_f = \sigma_f^{-i} + \sigma_f^{-1} + \sigma_f^{i}$$ of the usual spectrum (of an isolated singularity locally described by some $f(x,y)=y^4+p(x)$ specified below) into \emph{eigenspectra} under the action of $\rho\colon y\mapsto iy$, followed by a base-change.  We refer the reader to \cite[$\S\S$1-2]{KL20} for a description of how the spectra are calculated and the vanishing cycle sequence used to relate them to the LMHS.

For type (A), start with the tacnode degeneration 
$y^4+x^2=t$ and base-change by $t\mapsto t^2$. (This models the degeneration of $C_{\mu}$ as $x_i\sim t$ and $x_j \sim -t$ collide; the base-change is necessary to preserve the order.  While the resulting total space is singular, the base-change does not affect the LMHS.)  The tacnode spectrum is 
\begin{align*}
\sigma^{-i} = [\frac{3}{4}] 
& &
\sigma^{-1} = [1] 
& &
\sigma^{i} = [\frac{5}{4}] 
\end{align*}
and the effect of the base-change is to square the action of $T_{ss}$. This leads to LMHS types
\begin{align*}
\begin{tikzpicture}[scale=0.5]
   	\node at (-0.8,2) (foo) {$1$};
    \path[->] (foo) edge  [loop left] node {$-1$} ();
    \draw[-,line width=1.0pt] (0,0) -- (0,3);
    \draw[-,line width=1.0pt] (0,0) -- (3,0);
	\fill (2,0) circle (5pt);
	\node at (2,-0.5)  {$1$};		
	\fill (-0.3,2) circle (5pt);
	\fill (0.3,2) circle (5pt);	
	\node at (0.8,2) {$4$};
	\node at (1,-2) {$V_{\lim}$};					
\end{tikzpicture}
& &
\begin{tikzpicture}[scale=0.5]
    \draw[-,line width=1.0pt] (0,0) -- (0,3);
    \draw[-,line width=1.0pt] (0,0) -- (3,0);
	\fill (2,0) circle (5pt);
	\fill (2,2) circle (5pt);
	\fill (0,0) circle (5pt);
	\fill (0,2) circle (5pt);	
    \draw[->,line width=1.0pt] (1.7, 1.7) -- (0.2,0.2);	
	\node at (2.5,2.5) {$1$};				
	\node at (-0.5,-0.5) {$1$};				
	\node at (2,-0.5) {$2$};				
	\node at (-0.5,2) {$2$};
	\node at (1,-2) {$H_{\lim}^1(B_{\mu})$};		\node at (0.8,1.5) {\text{N}};				
\end{tikzpicture}
& &
\begin{tikzpicture}[scale=0.5]
   	\node at (2,0.8) (foo) {$1$};
    \path[->] (foo) edge  [loop right] node {$-1$} ();
    \draw[-,line width=1.0pt] (0,0) -- (0,3);
    \draw[-,line width=1.0pt] (0,0) -- (3,0);
	\fill (2,-0.3) circle (5pt);
	\fill (2,0.3) circle (5pt);	
	\fill (0, 2) circle (5pt);
	\node at (-0.5,2) {$1$};			
	\node at (2,-0.8) {$4$};
	\node at (1,-2) {$\overline{V}_{\lim}$};
\end{tikzpicture}
\end{align*}
where the non-trivial $T_{ss}$ action is shown with an arrow.  In particular, from the left-hand picture, we find that on $V$ each $T_{ss}$ has order two. These are nothing but the reflections $\alpha(ij)$ in \cite[Prop 3.1]{MY93} which are shown to generate $\Gamma$.

For type (B), we start with two copies of $y^4+x^4=t$
and base-change by $t \mapsto t^4$.  (This reflects how the type (B) components arise, see above; once again the base-change is done to preserve the order.)
The pre-base-change spectra are
\begin{align*}
\sigma^{-i}
=
[\frac{1}{2}] + \big[\frac{3}{4}\big] +  \big[ 1
\big]
& &
\sigma^{-1}
=
[\frac{3}{4}] + [1] + [\frac{5}{4}]
& &
\sigma^{i}
=
[1] + \big[\frac{5}{4}\big] +  \big[ \frac{3}{2}
\big]
\end{align*}
for each of the two singular points, so that the base-change renders the monodromy unipotent (i.e. $T_{ss} = I$).  The sum of the $\mathrm{Gr}^W_1$'s in the resulting LMHS types
\begin{align*}
\begin{tikzpicture}[scale=0.5]
    \draw[-,line width=1.0pt] (0,0) -- (0,3);
    \draw[-,line width=1.0pt] (0,0) -- (3,0);
	\fill (2,2) circle (5pt);
	\fill (0,0) circle (5pt);
	\fill (0,2) circle (5pt);	
    \draw[->,line width=1.0pt] (1.7, 1.7) -- (0.2,0.2);	
	\node at (2.5,2.5) {$1$};				
	\node at (-0.5,-0.5) {$1$};				
	\node at (-0.5,2) {$4$};
	\node at (0.8,1.5) {\text{N}};	
	\node at (1,-2) {$V_{\lim}$};			
\end{tikzpicture}
& &
\begin{tikzpicture}[scale=0.5]
    \draw[-,line width=1.0pt] (0,0) -- (0,3);
    \draw[-,line width=1.0pt] (0,0) -- (3,0);
	\fill (2,0) circle (5pt);
	\fill (2,2) circle (5pt);
	\fill (0,0) circle (5pt);
	\fill (0,2) circle (5pt);	
    \draw[->,line width=1.0pt] (1.7, 1.7) -- (0.2,0.2);	
	\node at (2.5,2.5) {$1$};				
	\node at (-0.5,-0.5) {$1$};				
	\node at (2,-0.5) {$2$};				
	\node at (-0.5,2) {$2$};
	\node at (1,-2) {$H_{\lim}^1(S_{\mu})$};			\node at (0.8,1.5) {\text{N}};			
\end{tikzpicture}
& &
\begin{tikzpicture}[scale=0.5]
    \draw[-,line width=1.0pt] (0,0) -- (0,3);
    \draw[-,line width=1.0pt] (0,0) -- (3,0);
	\fill (2,0) circle (5pt);
	\fill (2,2) circle (5pt);
	\fill (0,0) circle (5pt);
    \draw[->,line width=1.0pt] (1.7, 1.7) -- (0.2,0.2);	
	\node at (2.5,2.5) {$1$};				
	\node at (-0.5,-0.5) {$1$};				
	\node at (2,-0.5) {$4$};				
	\node at (1,-2) {$ \overline{V}_{\lim}$};			\node at (0.8,1.5) {\text{N}};			
\end{tikzpicture}
\end{align*}
comprise two copies of $H^1(\mathfrak{F})$, where $\mathfrak{F}$ is the Fermat quartic plane curve (i.e.  $x^4 + y^4 = t^4$); and the extension classes (in the category MHS) are all trivial.

But this describes a 0-dimensional locus in the GIT compactification.  The KSBA replacement lets us deform \emph{off} this Fermat/trivial-extension locus. The decomposition under $\rho$,  the LMHS \emph{types}, and the fact that the monodromy is unipotent \emph{do not change}; but the singular object parametrized by the the given type (B) stratum is now a pair of genus-three curves 
\begin{align*}
D_{\nu}^{(j)} := \lbrace Y^4 = \Pi_{k=1}^4(X - \xi_k^{(j)}Z) \rbrace \subseteq \mathbb{P}^2,
& 
~(j \in \{1,2\})
\end{align*}
glued along four points 
$\{q_{\ell} = [i^{\ell}:1:0]\}_{\ell=0}^3$  
with $\rho$ acting by $Y \mapsto iY$ (and 
$q_{\ell} =\rho^{\ell}(q_0)$), to form 
$D_{\nu} = D_{\nu}^{(1)} \cup D_{\nu}^{(2)} $.  

\begin{remark}
Here ``$\nu$'' is simply the parameter on the type (B) stratum, for which we shall not need a precise geometric description.  However, one sees at once that it must ``take in'' more than just the isomorphism classes of $D_{\nu}^{(1)}$ and $D_{\nu}^{(2)}$ (i.e. the cross-ratios $\textsc{cr}(\underline{\xi}^{(1)})$ and $\textsc{cr}(\underline{\xi}^{(2)})$), since the strata are 4-dimensional.  Moreover, the limiting period map $\Phi_{\lim}$ for $\tilde{\mathcal{V}}$ won't even \emph{directly} see the isomorphism classes of $D_{\nu}^{(1)}$  and $D_{\nu}^{(2)}$, the variation in which is recorded by the $\rho^2$-\emph{invariant} part of the limiting period map (ignored by $\Phi_{\lim}$).  More on this in $\S$\ref{sec:Matt5}.
\end{remark}

\subsection{Extending the period map}\label{sec:Matt3}
We now turn to the existence of an extension
\begin{align*}
\overline{\Phi}:
\overline{\mathcal{M}}^{\text{KSBA}} \to 
\overline{
\left(
\Gamma \backslash \mathbb{B}_5    
\right)}^{\tor}
\end{align*}
of $\Phi$, together with a Hodge-theoretic interpretation at the boundary (in codimension one only).  More precisely, denote by $\overline{\mathcal{M}}^{\circ}\subsetneq\overline{\mathcal{M}}^{\text{KSBA}}$ the union of $\mathcal{M}$ and all codimension-one strata (of type (A) or (B)); and simply define $\overline{\Phi}$ to be the isomorphism $\overline{\Phi}_{\mathbf{\frac{1}{4}}}$ from Theorem \ref{maintheorem}, which we already know restricts to $\Phi$ on $\mathcal{M}$ (see $\S$\ref{s1.1}).  Then we claim:

\begin{proposition}\label{prop:Matt3}
The restriction $\overline{\Phi}|_{\mathcal{S}}\colon \nu \mapsto \overline{\Phi}(\nu)$ to a codimension-one stratum computes the LMHS of $\tilde{\mathcal{V}}$ at $\nu$, modulo the action of $e^{\mathbb{C}N}$ \textup{(}choice of local parameter\textup{)} and $\Gamma$. 
\end{proposition}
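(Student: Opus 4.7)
The plan is to identify $\overline{\Phi}(\nu)$ with the LMHS of $\tilde{\mathcal{V}}$ at $\nu$ by comparing the geometrically-defined $\overline{\Phi}$ (from Theorem~\ref{maintheorem}) with a Hodge-theoretic extension built from Schmid's nilpotent orbit theorem and the AMRT \cite{AMRT75} parametrization of the toroidal boundary. First I would recall that above a $0$-dimensional cusp $F$ of $\overline{\Gamma\backslash \mathbb{B}_5}^{\bb}$, with unipotent radical $U_F \leq P_F$ having $1$-dimensional center $Z_F$, the open part of the toroidal boundary divisor is $(U_F(\mathbb{Z})/Z_F(\mathbb{Z}))\backslash (U_F(\mathbb{C})/Z_F(\mathbb{C}))$, and its points classify polarized Hodge-theoretic data $(F^{\bullet}_{\lim}, N, W_{\bullet}(N))$ modulo the Deligne torus action $e^{\mathbb{C}N}$ (encoding the choice of transverse local parameter) and the residual action of $\Gamma$.

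Next, along a holomorphic disc $\{\mu_t\}$ transverse to a codimension-one stratum $\mathcal{S}$ at $\nu=\mu_0$, Schmid's theorem gives the asymptotic $\Phi(\mu_t) \sim e^{\frac{\log t}{2\pi i} N}\cdot F^{\bullet}_{\lim}$ in $\Gamma\backslash D$, after a finite base change rendering the local monodromy unipotent. By construction of the toroidal compactification, this nilpotent orbit extends continuously to $t=0$, with limit value equal to the class of $(F^{\bullet}_{\lim}, N)$ modulo $e^{\mathbb{C}N}$ and $\Gamma$. For type (B) strata this lies on an exceptional divisor of $\varphi$, matching the LMHS of $V_{\lim}$ with $N\neq 0$ computed in \S\ref{sec:Matt2}; for type (A) strata one has $N=0$ on $\tilde{\mathcal V}$, so the limit is pure and lies on the mirror $\Gamma\backslash\mathcal{H}$ associated to the reflection $T_{\mathrm{ss}}=\alpha(ij)\in\Gamma$ from \cite[Thm.~4.1]{MY93}. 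Together, these assignments define a morphism $\overline{\Phi}_{\mathrm{Hodge}}\colon \overline{\mathcal{M}}^{\mathrm{KSBA}}\to \overline{\Gamma\backslash \mathbb{B}_5}^{\tor}$ extending $\Phi$, by the AMRT extension theorem (the requisite compatibility of the monodromy cones with the fan defining the toroidal compactification being guaranteed a posteriori by the existence of $\overline{\Phi}$).

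Finally, since $\overline{\Phi}_{\mathrm{Hodge}}$ and $\overline{\Phi}$ are both morphisms to the separated target extending $\Phi$ on the dense open $\mathcal{M}$, they must coincide, which gives the desired identification $\overline{\Phi}|_{\mathcal{S}}=\overline{\Phi}_{\mathrm{Hodge}}|_{\mathcal{S}}$. The main subtlety I expect is the non-unipotence of local monodromy along type (A) strata (where $T_{\mathrm{ss}}$ is an order-two reflection), which I would address via the cyclic base change $t\mapsto t^2$ of \S\ref{sec:Matt2}: the ambiguity introduced by this base change is exactly the reflection $\alpha(ij)$, which already lies in $\Gamma$ and is thus absorbed into the stated $\Gamma$-equivalence without altering $\overline{\Phi}(\nu)$.
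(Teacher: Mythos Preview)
Your strategy is the same as the paper's: build a Hodge-theoretic extension of $\Phi$ whose boundary values record the LMHS, then identify it with the geometric $\overline{\Phi}$ by uniqueness of extensions to a separated target. The paper implements this via Kato--Usui \cite{KU08} (extended to Mumford--Tate domains by \cite{KP16}): it passes to a normal neat subgroup $\Gamma_0\trianglelefteq\Gamma$, takes $\overline{\mathcal{M}}_0$ to be the normalization of $\overline{\mathcal{M}}$ in $\mathbb{C}(\Gamma_0\backslash\mathbb{B}_5)$, applies \cite[Thm.~B]{KU08} on the smooth locus $\overline{\mathcal{M}}_0^{\circ}$ to obtain $\overline{\Phi}_0^{\circ}$ with the LMHS interpretation, and then quotients by $\Gamma/\Gamma_0$. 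Your route via AMRT/Schmid with case-by-case base change is a legitimate alternative, and your handling of the two stratum types agrees with the paper's: for type~(B) the monodromy is already unipotent (this was the point of the $t\mapsto t^4$ in \S\ref{sec:Matt2}), so Schmid/AMRT applies directly; for type~(A) one has $N=0$ and the limit is pure, landing in $\Gamma\backslash\mathbb{B}_5$ via Borel extension, with the $\alpha(ij)\in\Gamma$ absorbing the base-change ambiguity exactly as you say.

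Two small points. First, your claim that $\overline{\Phi}_{\mathrm{Hodge}}$ is defined on all of $\overline{\mathcal{M}}^{\mathrm{KSBA}}$ is not supported by the argument given, which only treats the codimension-one locus $\overline{\mathcal{M}}^{\circ}$; the paper is explicit about this restriction, and it suffices for the proposition. Second, the parenthetical ``guaranteed a posteriori by the existence of $\overline{\Phi}$'' is unnecessary and slightly circular: for a ball quotient the fan is canonical (real rank~$1$), so cone compatibility is automatic and need not be imported from Theorem~\ref{maintheorem}. The paper's neat-subgroup maneuver is what makes the extension theorem apply cleanly without your local base-change patches; conversely, your approach avoids introducing $\Gamma_0$ and the auxiliary cover $\overline{\mathcal{M}}_0$.
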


\begin{proof}
One way to see this is by Theorem B of \cite{KU08}, as we demonstrate below.  Rather than offering a narrow proof, we spell out in some detail \emph{how} the extension records the LMHS along $\mathcal{S}$, concentrating on strata of type (B).  

First note that, by \cite[Sec 6]{KP16}, the results of \cite{KU08} (stated for period domains) carry over to the Mumford-Tate domain setting, which includes all connected Shimura varieties.  Moreover, in this ``classical'' case, their compactifications coincide with those of \cite{AMRT75}, see \cite[Rmk. 8.2.7]{KU08}.
Since $G_{\mathbb{R}} \cong U(5,1)$ has real rank 1, one obtains a complete fan $\Sigma$ 
 in the sense of Kato-Usui by taking the $\mathrm{Ad}(\Gamma)$-orbit of the monodromy logarithms 
$N \in \mathrm{End}(\widetilde{V}, Q, \rho)$ 
attached to all $35$ type (B) strata in $\S$\ref{sec:Matt2}. (One needs to parallel-transport each $N$ from the nearby fiber of $\mathrm{End}(\widetilde{\mathbb{V}})$ at each boundary component to the fiber identified with $\mathrm{End}(\widetilde{V})$).  In particular, we have $|\Gamma\backslash\Sigma|=35$.

As the theory in \cite{KU08} is stated in terms of a neat group, we first pass to a normal neat subgroup $\Gamma_0$ of $\Gamma$. Writing $(\Gamma_0)_N \leq \Gamma_0$ for the centralizer of $N$, the toroidal compactification is 
\begin{align*}
\overline{
\left( 
\Gamma_0 \backslash \mathbb{B}_5
\right)   
}^{\Sigma}
=
\left(
\Gamma_0 \backslash \mathbb{B}_5
\right)
\amalg
\coprod_{
\alpha \in 
\Gamma_0 \backslash \Sigma 
}
\left. (\Gamma_0)_{N_{\alpha}} \right\backslash B(N_{\alpha})\; ,
\end{align*}
where the Hodge-theoretic boundary component $B(N_{\alpha})$ is defined as in \cite[$\S$7]{KP16}, as a set of $N_{\alpha}$-nilpotent orbits $e^{\mathbb{C}N_{\alpha}}F^{\bullet}\subseteq \check{\mathbb{B}}_5$ in the compact dual (or equivalently, limiting MHS modulo choice of local coordinate).
More precisely, let 
$Z_N = M_{N} \rtimes G_N$ denote the centralizer of $N$, where $M_N$ is the unipotent radical and $G_N$ is reductive; inside this, we have the Hodge group $M_{B(N)} = M_N \rtimes H_N$
of the boundary component (with $H_N \leq G_N$ again reductive).  Defining
$\mathfrak{M}_{B(N)} := M_N(\mathbb{C}) \rtimes H_N(\mathbb{R})$, and 
$\mathcal{K}_N \leq \mathfrak{M}_{B(N)}$ the stabilizer of a $\mathbb{Q}$-split base-point $F_0^{\bullet}$ (i.e. special limiting Hodge flag), we may present the toroidal boundary component as a fibration \begin{align*}
(\Gamma_0)_N \backslash B(N)
=
(\Gamma_0)_Ne^{\mathbb{C}N}\backslash \mathfrak{M}_{B(N)} / \mathcal{K}_N 
\;\;{\relbar\joinrel\relbar\joinrel\twoheadrightarrow}\;\;
\overline{(\Gamma_0)_N}
\backslash H_N(\mathbb{R}) / \overline{\mathcal{K}_N}
=
\overline{(\Gamma_0)_N} \backslash  D(N)
\end{align*}
over a Baily-Borel boundary component.  This map may be interpreted simply as forgetting all extension classes in a MHS on the left-hand side; see [loc. cit.] and \cite[Thm. 5.21]{KP14} for details.

For the specific case at hand (for type (B)), $H_N \cong U(1)$ because the LMHS on $\widetilde{V}$ has associated-graded $\mathrm{Gr}^{W(N)}_{\bullet}\tilde{V}$ a direct sum of Tate-HS and $\mathbb{Q}(i)$-CM-HS. Hence $D(N)$ is a point, while 
$\Gamma_{0, N} \backslash B(N)$ is $\mathrm{CM}$-abelian fourfold parametrizing extension classes in the LMHS $\tilde{V}_{\lim}=(\tilde{V}_{\mathbb{Z}},F^{\bullet}_{\lim}\tilde{V}_{\mathbb{C}},W(N)_{\bullet}\tilde{V})$ with $\rho$-action.  Indeed, viewing $\tilde{V}_{\lim}$ as $\mathrm{Res}_{\mathbb{Z}[i]/\mathbb{Z}}$ of
\begin{align*}
\begin{tikzpicture}[scale=0.5]
	\node at (-2,1) {$V_{\lim}\;=$};		
    \draw[-,line width=1.0pt] (0,0) -- (0,3);
    \draw[-,line width=1.0pt] (0,0) -- (3,0);
	\fill (2,2) circle (5pt);
	\fill (0,0) circle (5pt);
	\fill (0,2) circle (5pt);	
    \draw[->,line width=1.0pt] (1.7, 1.7) -- (0.2,0.2);	
	\node at (2.5,2.5) {$1$};				
	\node at (-0.5,-0.5) {$1$};				
	\node at (-0.5,2) {$4$};
	\node at (1.4,0.7) {\text{N}};	
\end{tikzpicture}
\end{align*}
(in which $\rho$ replaces multiplication by $-i$), we have $\mathrm{Ext}^1_{\mathbb{Z}[\rho]\text{-MHS}}(\mathbb{Z}[\rho](-1),\mathrm{Gr}^W_1 \tilde{V}_{\lim})\cong $
$$
\mathrm{Ext}^1_{\mathbb{Z}[i]\text{-MHS}}\left(
\mathbb{Z}[i](-1), 
\mathrm{Gr}_1^{W}V_{\lim}
\right)
\cong
\frac{Gr_1^W V_{\mathbb{C}}}{
F^1 Gr_1^{W} V_{\mathbb{C}}
+
Gr^W_1 V_{\mathbb{Z}}
}
\cong
\frac{Gr_1^W V_{\mathbb{C}}}{
Gr^W_1 V_{\mathbb{Z}}
}
\cong 
\frac{\mathbb{C}^4}{\mathbb{Z}[i]^{\oplus 4}
},
$$
see $\S$\ref{sec:Matt4} for more details.

On the domain side, we let $\mathcal{M}_{0}$ 
[resp. $\overline{\mathcal{M}}_{0},\overline{\mathcal{M}}_0^{\circ}$] 
be the normalization of $\mathcal{M}$ [resp. $\overline{\mathcal{M}},\overline{\mathcal{M}}^{\circ}$]
in the function field of  $\Gamma_0 \backslash \mathbb{B}_5$. Since the singularities of $\overline{\mathcal{M}}_0$ are in codimension $\geq 2$ (over singularities of $\overline{\mathcal{M}}\setminus \mathcal{M}$), $\overline{\mathcal{M}}_0^{\circ}$ is smooth. 
The resulting period map $\Phi_0\colon \mathcal{M}_0\longrightarrow \Gamma_0 \backslash \mathbb{B}_5$ therefore extends to 
$$
\overline{\Phi}_0^{\circ}:
\overline{\mathcal{M}}_0^{\circ}
\longrightarrow
\overline{\left(
\Gamma_0 \backslash \mathbb{B}_5    
\right)}^{\Sigma}
$$
by \cite[$\S$4.2.1 (Thm. B)]{KU08}, which says moreover that for $\nu_0 \in \overline{\mathcal{M}}_0^{\circ} \setminus \mathcal{M}_0$,
the point $\overline{\Phi}^{\circ}_0(\nu_0)$ records the LMHS of $\widetilde{\mathcal{V}}$ at $v$ in the corresponding 
$(\Gamma_0)_{N_{\alpha}} \backslash B(N_{\alpha})$.
Since $\Gamma_0 \trianglelefteq \Gamma$, and LMHSs are invariant under base-change, we can now quotient both sides by $\Gamma/\Gamma_0$ to get a codimension-one extension
$$
\overline{\Phi}^{\circ}:
\overline{\mathcal{M}}^{\circ}
\longrightarrow
\overline{
\left(
\Gamma \backslash \mathbb{B}_5
\right)
}^{\Sigma}
=
\left(
\Gamma \backslash \mathbb{B}_5
\right)
\amalg
\coprod_{
\bar{\alpha} \in 
\Gamma \backslash \Sigma 
}
\left. \Gamma_{N_{\bar{\alpha}}} \right\backslash B(N_{\bar{\alpha}})
$$
with the LMHS interpretation along strata $\mathcal{S}$ of type (B).  (For strata of type (A) we get this too, with the LMHS pure hence belonging to $\Gamma\backslash \mathbb{B}_5$.)  By uniqueness of extensions, $\overline{\Phi}^{\circ}$ is the restriction of the $\overline{\Phi}$ from Thm. \ref{maintheorem}.\end{proof}

\begin{remark}\label{rem:gluingLMHS}
In the statement of Prop. \ref{prop:Matt3}, ``LMHS'' is meant in the marked sense:  that is, a pair of flags $F^{\bullet}$ and $W_{\bullet}$ on $\tilde{V}$ with its preordained integral basis.  Heuristically, while $e^{\mathbb{C}N}$ kills off the extension information of $\mathrm{Gr}^W_2 \tilde{V}$ by $\mathrm{Gr}^W_0\tilde{V}$, the action of $\Gamma$ equates (some \emph{but not all}) LMHSs which are \emph{isomorphic} as MHS.  The $\Gamma/\Gamma_N$ part of this gets absorbed in equating the infinitely many $\{B(N')\}$ for $N'\in \mathrm{Ad}(\Gamma).N$, while the $\Gamma_N$ part acts on $B(N)$.  
\end{remark}

We can therefore characterize the content of the Proposition for type (B) strata more precisely in the language introduced in its proof.  Namely, fixing a representative $N$ of the orbit $\mathrm{Ad}(\Gamma).N$ attached to a stratum $\mathcal{S}$, $\overline{\Phi}(\nu)$ yields a LMHS up to the action of $\Gamma_N e^{\mathbb{C}N}$, which is exactly to say \textbf{a point of the boundary component} $\mathbf{\Gamma_N\backslash B(N)}$.  Of course, the latter is a quotient of $(\Gamma_0)_N\backslash B(N)$, which is to say a CM abelian fourfold isogenous to $(\mathbb{C}/\mathbb{Z}[i])^{\times 4}$; but we shall need a more precise identification.

\subsection{Hodge-theoretic boundary}\label{sec:Matt4}

To describe $\Gamma_N\backslash B(N)$, recall from $\S$\ref{sec:Matt2} the pair of families of genus-three curves $\{D_{\nu}^{(j)}\}$ ($j=1,2$, $\nu\in\mathcal{S}$) over a type (B) stratum, with the automorphism $\rho$ of degree four.  

Fix $j=1$ or $2$.  As a VHS on $\mathcal{S}$, the $\rho^2$-anti-invariants $H^1(D_{\nu}^{(j)})^-$ are --- unlike the full $H^1(D_{\nu}^{(j)})$ --- isotrivial (i.e. locally constant), since the eigenspaces for $\rho$ have Hodge numbers $(0,2)$ (for $i$) and $(2,0)$ (for $-i$).  But isotrivial VHSs admit finite monodromy, and exactly the same computation as for type (A) in $\S$\ref{sec:Matt2} shows that the $(-i)$-eigenspace $H^{0,1}(D_{\nu}^{(j)})^-$ has order two monodromy (with $T-I$ of rank 1) when two ramification points $\xi_k^{(j)},\xi_{\ell}^{(j)}$ ($k,\ell\in\{1,2,3,4\}$) collide.  (After all, such collision points are where the closures of type (A) and type (B) strata meet.)  Let $\mathcal{R}$ be the finite group generated by these six reflections.

\begin{proposition}\label{prop:Matt4}
We have $$\Gamma_N\backslash B(N)\cong \times_{j=1}^2 \mathcal{R}\backslash \tilde{J}(D_{\nu}^{(j)})^- \cong \mathbb{P}^2\times \mathbb{P}^2\,,$$ where $\tilde{J}(D_{\nu}^{(j)})^-:= \mathrm{Ext}^1_{\mathbb{Z}[\rho]\textup{-MHS}}\left(\mathbb{Z}[\rho],(1-\rho)H_1(D_{\nu}^{(j)},\mathbb{Z})\right)$.
\end{proposition}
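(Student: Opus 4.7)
The plan is to unfold the Hodge-theoretic description of $B(N)$ from Proposition~\ref{prop:Matt3}, factor it as a product of Jacobian-like objects attached to the two curves $D^{(j)}_\nu$, and then identify each quotient by $\mathcal{R}$ with $\mathbb{P}^2$.

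To begin, I would combine $\mathrm{Gr}^W_2\tilde{V}_{\lim}\cong \mathbb{Z}[\rho](-1)$ with the local invariant cycle decomposition $\mathrm{Gr}^W_1\tilde{V}_{\lim}\cong \bigoplus_{j=1}^2 H^1(D^{(j)}_\nu,\mathbb{Z})^{-\rho^2}$ already recorded in \S\ref{sec:Matt2}. Once $e^{\mathbb{C}N}$ has absorbed the $\mathrm{Ext}^1(\mathrm{Gr}^W_2,\mathrm{Gr}^W_0)$ piece, the remaining data in $B(N)$ is an element of
$$\bigoplus_{j=1}^2 \mathrm{Ext}^1_{\mathbb{Z}[\rho]\textup{-MHS}}\!\bigl(\mathbb{Z}[\rho](-1),\,H^1(D^{(j)}_\nu,\mathbb{Z})^{-\rho^2}\bigr).$$
Each summand is to be identified with $\tilde{J}(D^{(j)}_\nu)^-$ via Poincar\'e duality $H^1(D^{(j)})^{-\rho^2}(1)\cong H_1(D^{(j)})^{-\rho^2}$ combined with the Tate-twist identity $\mathrm{Ext}^1(A(-1),B)\cong \mathrm{Ext}^1(A,B(1))$; the image $(1-\rho)H_1(D^{(j)},\mathbb{Z})$ is exactly the integral lattice picked out by the $\mathbb{Z}[\rho]$-equivariant Ext formula on the $-\rho^2$-part, which is why $(1-\rho)$ (rather than the bare $H_1$) appears in the statement.

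Next I would analyse the action of $\Gamma_N$. Translations by $\mathrm{Gr}^W_1\tilde{V}_{\mathbb{Z}}$ yield the torus structure on each $\tilde{J}(D^{(j)}_\nu)^-$. The additional finite automorphisms in $\Gamma_N$ come from monodromies around the adjacent type-(A) strata: a collision $\xi^{(j)}_k\sim \xi^{(j)}_\ell$ among the ramification points of a single curve $D^{(j)}$ acts by the rank-one reflection computed in \S\ref{sec:Matt2} on $H^1(D^{(j)})^{-\rho^2}$, and trivially on $H^1(D^{(3-j)})^{-\rho^2}$. The subgroup generated by the twelve such reflections (six for each $j$) is precisely $\mathcal{R}\times\mathcal{R}$, giving the factorization
$$\Gamma_N\backslash B(N)\;\cong\;\times_{j=1}^2\, \mathcal{R}\backslash \tilde{J}(D^{(j)}_\nu)^-.$$

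Finally, to establish that each factor is $\mathbb{P}^2$, I would appeal to Theorem~\ref{maintheorem} together with Example~\ref{ex:8points}: the closure of the type-(B) stratum inside $\overline{\mathrm{M}}_{0,\mathbf{\frac{1}{4}}+\epsilon}$ is a component of the exceptional divisor of the morphism to $(\mathbb{P}^1)^8/\!/_{\mathbf{\frac{1}{4}}}\SL_2$, hence isomorphic to $\mathbb{P}^2\times\mathbb{P}^2$. Transporting this isomorphism across Theorem~\ref{maintheorem} gives $\overline{\Gamma_N\backslash B(N)}\cong \mathbb{P}^2\times\mathbb{P}^2$; combined with the product factorization above, this forces $\mathcal{R}\backslash \tilde{J}(D^{(j)}_\nu)^-\cong \mathbb{P}^2$ for each $j$, and in particular yields its independence of $\nu\in\mathcal{S}$. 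The main obstacle will be the Ext-group identification of the second paragraph: carefully matching the two $\mathbb{Z}[\rho]$-equivariant Ext descriptions via the $(1-\rho)$-twist so that Poincar\'e duality induces an isomorphism of abelian varieties at the integral level, rather than merely an isogeny.
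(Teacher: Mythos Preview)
Your overall architecture is right, but the identification of $\Gamma_N$ is where your argument and the paper's diverge, and your version has a gap.

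The paper does \emph{not} argue that $\Gamma_N$ is generated by the collision monodromies. Instead it computes $\Gamma_N$ arithmetically, using the explicit description $\Gamma=U(h;(1-i))$ from \cite{MY93}: the unipotent part $\Gamma_N^{(-1)}=\Gamma_N\cap M_N(\mathbb{Z})$ is identified with $P_1(1-i)$, i.e.\ translations by $(1-i)\mathbb{Z}[i]^{\oplus 4}$, and the quotient $\Gamma_N^{(0)}$ with $U(\mathfrak{h}^{\oplus 2};(1-i))\cong\mathcal{R}^{\times 2}$, an explicit order-$16$ reflection group on each factor. The congruence condition modulo $(1-i)$ in $\Gamma$ is exactly what forces the translation lattice to be the sublattice $(1-i)\mathbb{Z}[i]^{\oplus 4}$ rather than $\mathbb{Z}[i]^{\oplus 4}$; this is the source of the $(1-\rho)$ in $\tilde J$, not any formal property of the $\mathbb{Z}[\rho]$-equivariant Ext. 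Only \emph{after} this computation does the paper check (via \cite[Lemma~6.4]{MY93}) that the generating reflections coincide with the six collision monodromies. Your route---asserting that the collision monodromies generate all of $\Gamma_N^{(0)}$ and that ``translations by $\mathrm{Gr}^W_1\tilde V_{\mathbb{Z}}$'' give the correct lattice---only exhibits a subgroup of $\Gamma_N$; you have no mechanism to rule out extra elements, and the ``main obstacle'' you flag about the $(1-\rho)$-twist is precisely this missing arithmetic input.

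Your final step is a genuine alternative: the paper deduces $\mathcal{R}\backslash\tilde J(D_\nu^{(j)})^-\cong\mathbb{P}^2$ from \cite[Thm.~1.1]{AA18} (smooth quotients of abelian varieties by finite groups), whereas you propose reading it off the Hassett side via Theorem~\ref{maintheorem} and Example~\ref{ex:8points}. That shortcut is valid once the product decomposition $\Gamma_N\backslash B(N)\cong(\mathcal{R}\backslash\tilde J^{(1)})\times(\mathcal{R}\backslash\tilde J^{(2)})$ is in hand, and it has the advantage of avoiding an external reference---but it cannot substitute for the arithmetic computation of $\Gamma_N$ itself.
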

\begin{proof}
Begin by using the fact that 
$\overline{\Phi}(\nu)$ is the LMHS at $\nu \in \mathcal{S}$ to compute its 
weight-one part:  by Clemens-Schmid and $\S\S$\ref{sec:Matt2}-\ref{sec:Matt3}, 
we have integrally (with $W_{\bullet}= W(N)_{\bullet}$) that
\begin{align*}
\mathrm{Gr}_1^W \overline{\Phi}(v)
\cong 
\mathrm{Gr}_1^W \widetilde{V}_{\lim,\nu}
\cong 
\mathrm{Gr}_1^W H^1(D_v)^{-}
\cong
H^1(D_v^{(1)})^{-}
\oplus
H^1(D_v^{(2)})^{-}\;,
\end{align*}
where the superscript ``$-$'' denotes the anti-invariants under $\rho^2$.  We can compute
$
H^1 (D_{\nu}^{(j)}, \mathbb{Z})^{-}
\underset{\cong}{\overset{\mathrm{P.D.}}{\longleftarrow}}
H_1 (D_{\nu}^{(j)}, \mathbb{Z})^{-}
$
as in Section \ref{sec:Matt1}, with basis $\{\textsc{a}_1, \textsc{a}_2, \textsc{b}_1, \textsc{b}_2\}$ (where $\textsc{b}_j=\rho_*(\textsc{a}_j)=-\rho^*(\textsc{a}_j)$) and intersection form
\begin{align*}
    \begin{pmatrix}
    0 & -1 & 2 & -1 \\
    -1 & 0 &-1 & 2 \\
    -2 & 1 & 0 & 1 \\
    1 & -2 & -1 & 0
    \end{pmatrix}.
\end{align*}
That is, as a $\mathbb{Z}[i]$-module we have
$
H^1(D_{\nu}^{(j)}, \mathbb{Z})^{-}
\cong 
\mathbb{Z}[i]^{\oplus 2},
$
with Hermitian form 
\begin{align*}
\mathfrak{h}(X,Y)\,=\,{}^t \bar{X}
\begin{pmatrix}
-2 & 1-i \\
i+1 & -2
\end{pmatrix}Y.
\end{align*}
Its Jacobian is 
\begin{align*}
J(D_{\nu}^{(j)})^{-}
=
\mathrm{Ext}^1_{\mathbb{Z}[\rho]\text{-MHS}}
\left(\mathbb{Z}[\rho],
H^1 ( D_{\nu}^{(j)}, \mathbb{Z})^{-}(1)\right)
\cong 
\frac{\mathbb{C}^2}{\mathbb{Z}[i]^{\oplus 2}}\;,
\end{align*}
and we define
\begin{align*}
J(D_{\nu})^{-}
:=
\mathrm{Ext}^1_{\mathbb{Z}[\rho]\text{-MHS}}
\left(
\mathbb{Z}[\rho],
\mathrm{Gr}_1^W \widetilde{V}_{\lim, v}
\right)
\cong 
\oplus_{j=1}^2
J(D_{\nu}^{(j)})^{-}
=
\frac{\mathbb{C}^4}{\mathbb{Z}[i]^{\oplus 4}}.
\end{align*}

To compare this with $\Gamma_N \backslash B(N)$, we use the fact that 
$$
B(N) = \mathbb{C}\langle N\rangle \backslash \mathrm{Lie} (M_N(\mathbb{C}))/
F^0 \mathrm{Lie}\left( M_N(\mathbb{C})\right)
$$
(cf. \cite[$\S$7]{KP16}), which in our case identifies with 
$$
\mathrm{Gr}_1^{W} \widetilde{V}_{\mathbb{C}}
/ F^{1}\mathrm{Gr}_1^W \widetilde{V}_{\mathbb{C}}
\cong
\widetilde{V}_{\lim}^{0,1}
=
V_{\lim}^{0,1}
\cong
\mathbb{C}^4.
$$
We break the action of $\Gamma_N$ on this space into that of 
$$
\Gamma_N^{(-1)}:=
\Gamma_N \cap M_N(\mathbb{Z})
\trianglelefteq
\Gamma_N,
$$
then $\Gamma_N^{(0)} := \Gamma_N / \Gamma_N^{(-1)}$. 

The first group is $P_1(1-i)$ in \cite[$\S$6]{MY93},
acting through $\pi$; that is, the group (of \emph{translations} by) 
$(1-i)\mathbb{Z}[i]^{\oplus 4}$.
This identifies
$
\Gamma_N^{(-1)} \backslash B(N)
$
with the $16:1$ cover
$$
\widetilde{J}(D_{\nu})^{-}
:=
\oplus_{j=1}^2
\widetilde{J}(D_{\nu}^{(j)})^{-}
:=
\bigoplus_{j=1}^2
\frac{
(\Omega^1 (D_{\nu}^{(j)})^{-})^{\vee}
}
{
(1-\rho)
H_1 (D_{\nu}^{(j)}, \mathbb{Z})
}
\cong 
\bigoplus_{j=1}^2
\frac{\mathbb{C}^2}{
(1-i)\mathbb{Z}[i]^{\oplus 2}
}
$$
of $J(D_v)^{-}$.  Note that $\rho^*$ acts as multiplication by $i$ on each $\Omega^1(D^{(j)}_{\nu})^-$.

On the other hand, in the notation of [loc. cit.]
$$
\Gamma_N^{(0)}
\cong 
\frac{
P(1-i)
}{
P_1(1-i)
}
\cong
U\left(
\mathfrak{h}^{\oplus 2}
, (1-i)
\right)
\cong 
\mathcal{R}^{\times 2}
$$
where $\mathcal{R} \cong 
\left( \mathbb{Z}_4 \times \mathbb{Z}_2\right)
\rtimes\mathbb{Z}_2
$
is the order-16 group generated by the reflections
\begin{align*}
\alpha
=
\begin{pmatrix}
-1 & -1 +i \\
0 & 1
\end{pmatrix}
& &
\beta =
\begin{pmatrix}
1 & 0 \\
-1-i & -1
\end{pmatrix}
& &
\gamma
=
\begin{pmatrix}
i & 1+i \\
1-i & -i
\end{pmatrix}
\end{align*}
acting on $\widetilde{J}(D_{\nu}^{(j)})^-$, cf. \cite[Lemma 6.2.(2)]{MY93}.  Moreover, \cite[Lemma 6.4]{MY93} says that $\pm\alpha,\pm\beta,\pm\gamma$ are exactly the six reflections from the beginning of this section.  Finally, noting that $\widetilde{J}(D_{\nu}^{(j)})^-\cong \mathbb{C}^2/\mathbb{Z}[i]^{\oplus 2}=(\mathbb{C}/\mathbb{Z}[i])^2$, we have by \cite[Thm 1.1]{AA18} that
$$\mathcal{R} \backslash \widetilde{J}(D_{\nu}^{(j)})^-
\cong \mathbb{P}^2$$
for $j=1,2$.
\end{proof}

\begin{remark}
Notice that Proposition \ref{prop:Matt4} yields a Hodge-theoretic interpretation of the $\mathbb{P}^2\times \mathbb{P}^2$ already encountered in the proof of Proposition \ref{relativeamplenesshassettside}.
\end{remark}

\begin{remark}\label{rem:powers}
One easily checks that $\mathcal{R}$ contains the subgroup $\{I,-I,iI,-iI\}$ generated by $\rho$.  This is relevant below.
\end{remark}

\subsection{From geometric to Hodge-theoretic boundary}
\label{sec:Matt5}

We can now compute $\overline{\Phi}|_{\mathcal S}$.
Given $\nu \in \mathcal{S}$, heuristically speaking $\overline{\Phi}(v)$ must record: (i) the LMHS of $\widetilde{\mathcal{V}}$ at $v$ modulo reparametrization; plus (ii) some finite level-structure data, since $\Gamma=U(h;(1-i))$ is a proper subgroup of $U(h;\mathbb{Z}[i])$. 

\begin{proof}[Proof of Theorem \ref{thm:Matt}]
By Clemens-Schmid, (i) is equivalent to the MHS on
$H_1(D_{\nu})^{-}$, which takes the form (after Tate-twisting by $(-1)$)
\begin{align*}
\begin{tikzpicture}[scale=0.5]
    \begin{scope}[shift={(-6,0)}]
    \draw[-,line width=1.0pt] (0,0) -- (0,3);
    \draw[-,line width=1.0pt] (0,0) -- (3,0);
	\fill (2,2) circle (5pt);
	\fill (2,0) circle (5pt);
	\fill (0,2) circle (5pt);	
	\node at (2.5,2.5) {$2$};				
	\node at (-0.5,2) {$4$};
	\node at (2,0.5) {$4$};	
	\node at (4.5,1.5) {=};		
    \end{scope}	
    \draw[-,line width=1.0pt] (0,0) -- (0,3);
    \draw[-,line width=1.0pt] (0,0) -- (3,0);
	\fill (2,2) circle (5pt);
	\fill (0,2) circle (5pt);	
	\node at (2.5,2.5) {$1$};				
	\node at (-0.5,2) {$4$};
	\node at (4,1.5) {$\bigoplus$};	
    \begin{scope}[shift={(5,0)}]
    \draw[-,line width=1.0pt] (0,0) -- (0,3);
    \draw[-,line width=1.0pt] (0,0) -- (3,0);
	\fill (2,0) circle (5pt);
	\fill (2,2) circle (5pt);
	\node at (2.5,2.5) {$1$};				
	\node at (2,-0.5) {$4$};				
    \end{scope}	
\end{tikzpicture}
\end{align*}
where the diagram represents the eigenspace decomposition into the $(-i)$ and $(i)$ eigenspaces, respectively.  The class of this extension of $\mathbb{Z}[\rho]$-MHS 
\begin{align} \label{eq:sequenceMHS}
    0 \longrightarrow
\oplus_{j=1}^2 
H_1( D_{\nu}^{(j)})^{-}
\longrightarrow
H_1(D_{\nu})^{-}
\longrightarrow
\mathbb{Z} [\rho]
\longrightarrow
0
\end{align}
is computed by taking the Abel-Jacobi invariant of a $\rho^2$-anti-invariant cycle supported on 
$\mathrm{sing}(D_{\nu}) = \{ q_0,q_1,q_2,q_3\}$ that generates
$\mathbb{Z}[\rho]$. There are four candidates: 
$[q_2]-[q_0]$ and its translates by $\rho^k$ with $k\in \{ 0,1,2,3\}$;
the choice will be erased later, since $\mathcal{R}$ contains $\langle\rho\rangle :=\{1,\rho,\rho^2,\rho^3\}$ (cf. Remark \ref{rem:powers}), and thus is immaterial.  So writing $\sigma^{(j)}$ for a path in $D_{\nu}^{(j)}$ with boundary $\partial \sigma^{(j)}=[q_2]-[q_0]$, we have the functional
$\int_{\sigma^j} \in(\Omega^1 (D_{\nu}^{(j)})^{-})^{\vee}$.
Going modulo ambiguities in the choice of $\sigma^{(j)}$ --- in the sense that the difference of two choices (with the same endpoints) is a topological 1-cycle --- forces us to divide not by 
$H_1( D_{\nu}^{(j)}, \mathbb{Z})^{-}$, but by the image of $H_1(D_{\nu}^{(j)},\mathbb{Z})$ in $(\Omega^1(D_{\nu}^{(j)})^-)^{\vee}$ (which is the strictly larger lattice $\tfrac{1-\rho}{2}H_1(D_{\nu}^{(j)},\mathbb{Z})^-$).  Writing $\overline{J}(D_{\nu}^{(j)})^{-}$ for the resulting isogenous quotient of $J( D_{\nu}^{(j)})^{-}$, we get a point in $\overline{J}(D_{\nu})^{-}/ \langle\rho \rangle$
from the extension \eqref{eq:sequenceMHS}.  This is a little different from what we want, and the way to fix this is to take (ii) into account.

Namely, we use the fact that the four branch points  $\{\xi^{(j)}_1,\xi^{(j)}_2,\xi^{(j)}_3,\xi^{(j)}_4\}$  of $D_{\nu}^{(j)} \twoheadrightarrow \mathbb{P}^1$ are ordered. Choose one, say $\xi_1^{(j)}$. Draw a path 
$\sigma_{+}^{(j)}$ from $\xi_1^{(j)}$ to $q_2$, and define $\sigma_{-}^{(j)} := \rho^2 ( \sigma_{+}^{(j)} )$.  Finally, put $\sigma^{(j)} :=\sigma_{+}^{(j)}-\sigma_{-}^{(j)}$ and observe that $\partial \sigma^{(j)}=q_2 - q_0$.
The ambiguity in the choice of $\sigma_+^{(j)}$ (with the same endpoints) produces an ambiguity of 
$
(1 - \rho^2)H_1( D_{\nu}^{(j)}, \mathbb{Z})
=
(1- \rho)H_1( D_{\nu}^{(j)}, \mathbb{Z})^{-}
$
in $\sigma^{(j)}$. (Changing $\xi_1^{(j)}$ to some other $\xi_{\ell}^{(j)}$ has the same effect.)
Therefore $\int_{\sigma^{(j)}}$ yields a well-defined point of 
$\widetilde{J}( D_{\nu}^{(j)}, \mathbb{Z})^{-}$, which we can project to 
$\mathcal{R} \setminus \widetilde{J}( D_{\nu}^{(j)}, \mathbb{Z})^{-}$ (thereby also removing the ambiguity from the choice of $[q_2]-[q_0]$).  Doing this for $j=1,2$ yields the desired point  
$\overline{\Phi}(\nu)\in\Gamma_N \backslash B(N).$
\end{proof}

\begin{remark}
As alluded to in Remark \ref{rem:gluingLMHS}, not all isomorphic LMHSs are glued together --- just those for which the isomorphism is given by a $\gamma \in \Gamma$. This includes those points of 
$\widetilde{J}(D_{\nu}^{(1)})\times\widetilde{J}(D_{\nu}^{(2)})$
which are equated by $\mathcal{R} \times \mathcal{R}$.
\end{remark}


\section{Moduli of cubic surfaces: Naruki's compactification is toroidal}
\label{Narukicompactificationistoroidal}

The goal of this section is to show that the Naruki compactification $\overline{\mathbf{N}}$ of the moduli space of smooth marked cubic surfaces is isomorphic to the toroidal compactification of an appropriate ball quotient (see \cite{ACT02,DvGK05}). We start by briefly recalling the necessary background.


\subsection{Background on moduli of marked cubic surfaces and compactifications}
\label{backgroundmoduliofmarkedcubicsurfacesandcompactifications}

\begin{definition}
A smooth cubic surface $S\subseteq\mathbb{P}^3$ can be realized as the blow up of $\mathbb{P}^2$ at six points in general linear position not lying on a conic. $S$ is called \emph{marked} if we have a labeling of the six points blown up in $\mathbb{P}^2$. This induces the following labeling of the $27$ lines on $S$, which arise from the blow up construction: Let $E_i$ be the exceptional divisor over $p_i$, $L_{ij}$ the strict transform of the line passing through $p_i,p_j$, and $C_i$ the strict transform of the conic passing through $p_1,\ldots,\widehat{p}_i,\ldots,p_6$. The notation $E_i,L_{ij},C_i$ is what gives the marking of the cubic surface $S$. It follows that the moduli space of marked cubic surfaces, which we denote by $\mathbf{Y}$, is the following quotient:
\[
((\mathbb{P}^2)^6\setminus\Delta)/\SL_3,
\]
where $\Delta\subseteq(\mathbb{P}^2)^6$ is the closed subset parametrizing $6$-tuples of points in $\mathbb{P}^2$ where two coincide, or three are on a line, or all six lie on a conic.
\end{definition}




\subsubsection{Naruki's compactification $\overline{\mathbf{N}}$}
\label{narukicompactificationdefandfacts}

In \cite{Nar82}, Naruki defined a compactification $\overline{\mathbf{N}}$ of $\mathbf{Y}$ as follows. Given a smooth cubic surface $S\subseteq\mathbb{P}^3$, a \emph{tritangent} is a plane intersecting $S$ in three distinct lines. Considering the cross-ratios of $45$ specific quadruples of colinear tritangents of a cubic surface (for details we refer to \cite{Nar82}) defines an embedding $\mathbf{Y}\hookrightarrow(\mathbb{P}^1)^{45}$. The Zariski closure of $\mathbf{Y}$ in $(\mathbb{P}^1)^{45}$ under this embedding gives a compactification $\overline{\mathbf{N}}$ which is referred to as the \emph{Naruki compactification}.

The Naruki compactification is smooth, and the boundary $\overline{\mathbf{N}}\setminus\mathbf{Y}$ is normal crossing and decomposes into the union of $76$ irreducible divisors (see \cite[Theorem 1.1]{Nar82}). $36$ of these divisors are associated to marked cubic surfaces with $A_1$ singularities by \cite[Proposition 11.1]{Nar82}, and are called of \emph{type} $A_1$. The number of $A_1$ singularities on a cubic surface defines higher codimension strata called of \emph{type} $A_1^2,A_2^3$, and $A_1^4$. 
The remaining $40$ divisors are pairwise disjoint and are isomorphic to $(\mathbb{P}^1)^3$ by \cite[Proposition 11.2]{Nar82}. These divisors are called of \emph{type} $N$.


\subsubsection{GIT compactification $\overline{\mathbf{Y}}_{\GIT}$}
\label{GITcompndefandfacts}

Let $\mathbf{Y}$ be the moduli space of marked smooth cubic surfaces. Following \cite[\S2.8]{DvGK05}, the field of rational functions $\mathbb{C}(\mathbf{Y})$ is an extension of the field of rational functions of the GIT quotient
$$
\mathbb{P} (
H^0 ( 
\mathbb{P}^3, \mathcal{O}(3)
)
)
/ \! /_{\mathcal{O}(1)} \SL_4.
$$
This field extension has Galois group the Weyl group $W(E_6)$. Define the compactification $\overline{\mathbf{Y}}_{\GIT}$ to be the normalization of $\mathbb{P} (H^0 ( \mathbb{P}^3, \mathcal{O}(3)))/ \! /_{\mathcal{O}(1)} \SL_4$ in the field of rational functions
$\mathbb{C}\left(\mathbf{Y}\right)$. By construction, $W(E_6)$ acts on $\overline{\mathbf{Y}}_{\GIT}$ and the quotient of $\overline{\mathbf{Y}}_{\GIT}$ by this action recovers the above symmetric GIT quotient.

The GIT compactification is related the Naruki compactification $\overline{\mathbf{N}}$ as follows. Each one of the $40$ divisors of type $N$ of $\overline{\mathbf{N}}$ can be blown down as described in the statement of \cite[Proposition 12.1]{Nar82}. Blowing down all the type $N$ divisors defines a morphism $\overline{\mathbf{N}}\rightarrow\overline{\mathbf{Y}}_{\GIT}$ (see the introduction in \cite{Nar82} and also \cite[\S2.9]{DvGK05}).


\subsubsection{Baily-Borel compactification}
\label{BBcompndefandfacts}

In \cite{ACT02}, the authors described a ball quotient $\Gamma_c\backslash\cB_4$ whose Baily-Borel compactification $\overline{\Gamma_c\backslash\cB_4}^{\bb}$ is isomorphic to $\overline{\mathbf{Y}}_{\GIT}$ (see \cite[Theorem 3.17]{ACT02}). Due to this isomorphism, the singular points of the Baily-Borel compactification are locally isomorphic to the vertex of the cone over the Veronese embedding of $(\mathbb{P}^1)^3$ into $\mathbb{P}^7$. This is true because these are the singularities of the strictly semi-stable points in the GIT quotient 
$\overline{\mathbf{Y}}_{\GIT}$ (see the proof of \cite[Proposition C.4]{CMGHL19}).


\subsection{Proof of Theorem \ref{thm:maincubics}}

By the discussion in \S\ref{narukicompactificationdefandfacts}, \S\ref{GITcompndefandfacts}, and \S\ref{BBcompndefandfacts} we have the following diagram:
\begin{center}
\begin{tikzpicture}[>=angle 90]
\matrix(a)[matrix of math nodes,
row sep=2em, column sep=2em,
text height=2.0ex, text depth=0.25ex]
{\overline{\mathbf{N}}&\overline{\Gamma_c\backslash\mathbb{B}_4}^{\tor}\\
\overline{\mathbf{Y}}_{\GIT}&\overline{\Gamma_c\backslash\mathbb{B}_4}^{\bb},\\};
\path[->] (a-1-1) edge node[]{}(a-2-1);
\path[dashed,->] (a-1-1) edge node[]{}(a-1-2);
\path[->] (a-1-2) edge node[]{}(a-2-2);
\path[->] (a-2-1) edge node[above]{$\cong$}(a-2-2);
\end{tikzpicture}
\end{center}
where the rational map $\overline{\mathbf{N}}\dashrightarrow\overline{\Gamma_c\backslash\mathbb{B}_4}^{\tor}$ is an isomorphism on $\Gamma_c\backslash\cB_4$. We first show that $\overline{\mathbf{N}}\dashrightarrow\overline{\Gamma_c\backslash\mathbb{B}_4}^{\tor}$ extends to a morphism $\overline{\mathbf{N}}\rightarrow\overline{\Gamma_c\backslash\mathbb{B}_4}^{\tor}$. Let $\mathcal{H}\subseteq\mathbb{B}_4$ be the hyperplane arrangement described in \cite[\S2.19]{ACT02}. Note that $\Gamma_c\backslash\mathcal{H}$ corresponds in $\overline{\mathbf{Y}}_{\GIT}$ to cubic surfaces with $A_1$ singularities by \cite[Theorem 2.20]{ACT02}. We have that $\Gamma_c$ acts freely on $\mathbb{B}_4\setminus\mathcal{H}$ by \cite[Lemma 7.28]{ACT02}. Finally, $\overline{\mathbf{N}}$ is a normal crossing compactification of $\Gamma_c\backslash(\mathbb{B}_4\setminus\mathcal{H})\cong\mathbf{Y}$ as we already discussed in \S\ref{narukicompactificationdefandfacts}. Therefore, by Lemma~\ref{extensiontotoroidalwithnccompactificationandfreeactionandhyperplanearrangement} we have the desired extension $\overline{\mathbf{N}}\rightarrow\overline{\Gamma_c\backslash\mathbb{B}_4}^{\tor}$. Moreover, notice that by Lemma~\ref{extensionOpen} the morphism $\overline{\mathbf{N}}\rightarrow\overline{\Gamma_c\backslash\mathbb{B}_4}^{\tor}$ is an isomorphism away from closed subsets of codimension at least $2$.

On the other hand, let $E_1,\ldots,E_{40}$ be the exceptional divisors of $\overline{\Gamma_c\backslash\mathbb{B}_4}^{\tor}\rightarrow\overline{\Gamma_c\backslash\mathbb{B}_4}^{\bb}$, and denote by $E$ their disjoint union. Then $-E$ is a relatively ample divisor by the argument of Proposition~\ref{relativeamplenesstoroidalside}. Let $F_1,\ldots,F_{40}$ be the type $N$ exceptional divisors over the cusps $\overline{\mathbf{N}}\rightarrow\overline{\Gamma_c\backslash\mathbb{B}_4}^{\bb}$, and denote by $F$ their disjoint union. Then $-F$ is a relatively ample divisor by the argument of Proposition~\ref{pre-relativeamplenesshassettside} (note that the singularities blown up are locally isomorphic to the singularity of the cone over the Veronese embedding of $(\mathbb{P}^1)^3$ into $\mathbb{P}^7$ by \S\ref{BBcompndefandfacts}). Therefore, our claim follows by Lemma~\ref{kovacs}.\qed


\section{Naruki's compactification is a moduli space of KSBA stable pairs}

In this section, our goal is to show that Naruki's compactification $\overline{\mathbf{N}}$ has a modular interpretation in terms of KSBA stable pairs. We start by briefly recalling the necessary background.


\subsection{Stable pairs and moduli of cubic surfaces}

\begin{definition}
Let $X$ be a variety and $D$ a $\mathbb{Q}$-divisor on $X$ with coefficients in $(0,1]$. The pair $(X,D)$ is called \emph{semi-log canonical} if the following conditions hold:

\begin{enumerate}

\item $X$ is demi-normal, that is $X$ is $S_2$ and its codimension $1$ points are either regular or ordinary nodes;

\item If $\nu\colon X^\nu\rightarrow X$ is the normalization with conductors $E\subseteq X$ and $E^\nu\subseteq X^\nu$, then the support of $E$ does not contain any irreducible component of $D$;

\item $K_X+D$ is $\mathbb{Q}$-Cartier;

\item The pair $(X^\nu,E^\nu+\nu_*^{-1}D)$ is log canonical. More precisely, for each connected component $Z$ of $X^\nu$, the pair $(Z,(E^\nu+\nu_*^{-1}D)|Z)$ is log canonical (see \cite[Definition 2.8]{Kol13}), where $\nu_*^{-1}D$ denotes the strict transform of $D$.

\end{enumerate}
A pair $(X,D)$ is called \emph{stable} if it is log canonical and $K_X+D$ is ample.
\end{definition}

\begin{example}
\label{stablepaircubicsurfacewith27lines}
Let $S\subseteq\mathbb{P}^3$ be a smooth cubic surface whose $27$ lines are normal crossings. If $B\subseteq S$ is the divisor given by the sum of these lines, then $(S,cB)$ is log canonical for all $c\in\mathbb{Q}\cap(0,1]$. Let us now determine for which $c$ the log canonical divisor $K_S+cB$ is ample. If $H\subseteq\mathbb{P}^3$ is a hyperplane, then
\[
K_S+cB\sim-H|_S+9cH|_S=(9c-1)H|_S,
\]
which implies that $(S,cB)$ is stable for all rational $\frac{1}{9}<c\leq1$. Let us explain in detail why $B\sim9H|_S$. We can subdivide the $27$ lines into nine sets of three distinct coplanar lines. The hyperplanes associated to such sets of lines are called tritangents (see \cite[page 250-251]{Cay49} for a list of them, and the lines that they contain). In more detail, label the $27$ lines as follows: considering the blow up model $S\rightarrow\mathbb{P}^2$ at six general points $p_1,\ldots,p_6$, let $a_i$ be the exceptional divisors over $p_i$, $b_i$ the strict transforms of the conic not passing through $p_i$, and $c_{ij}$, $1\leq i<j\leq6$, the strict transform of the line through $p_i,p_j$. The $45$ tritangents are the planes
\[
(ij)=\langle a_i,b_j,c_{ij}\rangle,~(ij,k\ell,mn)=\langle c_{ij},c_{k\ell},c_{mn}\rangle,
\]
where $\{i,j,k,\ell,m,n\}=\{1,\ldots,6\}$ and $i<j,k<\ell,m<n$. Note that $(ij) \neq (ji)$. The $27$ lines on $S$ are then contained in the following tritangents (we included the notation of both Cayley and Schl\"afli):
\begin{align*}
(16)=(w)
= \{ a_1, b_6, c_{16} \}
& &
(12,34,56)=(\theta)
= \{ c_{12}, c_{34}, c_{56}\}
& & 
(52)=(\overline{\theta})
=\{ a_5, b_2, c_{25}\}
\\
(64) = (\overline{l})= \{a_6, b_4,c_{46}\}
& &
(15, 24, 36) = (\overline{m}) = 
\{c_{15}, c_{24}, c_{36}\} 
& &
(23) =  (\overline{n}) = \{a_2, b_3, c_{23}\}
\\
(45) = (l) = \{ a_4, b_5, c_{45} \}
& &
(14, 26, 35) =  (m) = \{ c_{14}, c_{26}, c_{35} \}
& &
(31) =(n) = \{ a_3, b_1, c_{13}\}.
\end{align*}
\end{example}


In the current paper, we are interested in considering the following alternative compactification by stable pairs.

\begin{definition}
\label{definitionksbaweight1/9+epsilon}
Let $\mathbf{Y}_\times\subseteq\mathbf{Y}$ be the open subset parametrizing marked smooth cubic surfaces $S$ such that the divisor $B$ on it consisting of the sum of the $27$ lines is normal crossing. Consider the family $(\mathcal{X},\mathcal{B})\rightarrow\mathbf{Y}_\times$ of pairs $(S,B)$, whose existence is guaranteed by \cite[Theorem 1.1]{HKT09}.
Fix a rational number $0<\epsilon\ll1$. Then $\left(\mathcal{X},\left(\frac{1}{9}+\epsilon\right)\mathcal{B}\right)\rightarrow\mathbf{Y}_\times$ is a family of stable pairs by Example~\ref{stablepaircubicsurfacewith27lines}. It follows that $\left(\mathcal{X},\left(\frac{1}{9}+\epsilon\right)\mathcal{B}\right)\rightarrow\mathbf{Y}_\times$ induces an embedding of $\mathbf{Y}_\times$ into an appropriate projective moduli space of stable pairs. Denote by $\overline{\mathbf{Y}}_{\frac{1}{9}+\epsilon}$ the Zariski closure of the image of $\mathbf{Y}_\times$ under this embedding. 
\end{definition}

The ultimate goal is to show that $\overline{\mathbf{N}}$ is isomorphic to the normalization of 
$\overline{\mathbf{Y}}_{\frac{1}{9}+\epsilon}$. To achieve this, we construct an appropriate family of stable pairs over $\overline{\mathbf{N}}$. Our starting point is to consider the family of surfaces over $\overline{\mathbf{N}}$ constructed by Naruki and Sekiguchi in \cite{NS80}  modifying the family of cubic surfaces originally constructed by Cayley in \cite{Cay49}.


\subsection{Naruki's family of cubic surfaces on \texorpdfstring{$\overline{\mathbf{N}}$}{Lg}}

\begin{definition}
\label{def:FamilyCubic}
Let $[x_0:x_1:x_2:x_3]$ denote the coordinates of $\mathbb{P}^3$ and let $(\lambda,\mu,\nu,\rho)$ be the coordinates of the torus $T=(\mathbb{C}^*)^4$. Let $\mathcal{S}\subseteq\mathbb{P}^3\times T$ be the family over $T$ of cubic surfaces defined by the following equation (\cite[Equation (5.1)]{Nar82}):
\begin{align*}
\rho x_3\bigg(&\lambda x_0^2+\mu x_1^2+\nu x_2^2+(\rho-1)^2(\lambda \mu \nu \rho-1)^2x_3^2\\
+&(\mu \nu +1)x_1x_2+(\lambda \nu +1) x_0x_2+(\lambda\mu +1)x_0x_1
\\
-&(\rho -1)(\lambda\mu\nu\rho-1)x_3\big((\lambda+1)x_0+(\mu+1)x_1+(\nu+1)x_2\big)\bigg)+x_0x_1x_2=0.
\end{align*}
The family $\mathcal{S}\rightarrow T$ extends to a family of cubic surfaces $\overline{\mathcal{S}}\rightarrow\overline{\mathbf{N}}$ by \cite[Proposition 12.2]{Nar82}.
\end{definition}
\begin{remark}
Let $\overline{\mathcal{S}}\rightarrow\overline{\mathbf{N}}$ be the family in Definition~\ref{def:FamilyCubic}. By the discussion after \cite[Proposition 12.2]{Nar82}, $\overline{\mathcal{S}}\rightarrow\overline{\mathbf{N}}$ has fibers equal to $\{x_0x_1x_2=0\}$ over the exceptional divisors of the map $\overline{\mathbf{N}}\rightarrow\overline{\mathbf{Y}}_{\GIT}$. In the next lemma we describe how the $27$ lines degenerate on such reducible fibers.
\end{remark}


\begin{lemma}\label{limit27}
Let $S(\mathbf{r})$ be the cubic surface of the family $\mathcal{S}\rightarrow T$ parametrized by the point $\mathbf{r}=(\lambda,\mu,\nu,\rho)\in T$. The limit for $\rho\to0$ of this surface is the union of three coordinate hyperplanes $x_0x_1x_2=0$, and the limits of the $27$ lines on $S(\mathbf{r})$ are the lines $\overline{A_iB_k},\overline{A_iC_k},\overline{B_jC_k}$ spanned by the points
$A_i,B_j,C_k$ given by
\begin{align*}
A_1 := [1:0:0:0] & & A_2 := [1:0:0:1] & & A_3 := [1:0:0:\lambda]
\\
B_1 := [0:1:0:0] & & B_2 := [0:1:0:1] & & B_3 := [0:1:0:\mu]
\\
C_1 := [0:0:1:0] & & C_2 := [0:0:1:1] & & C_3 := [0:0:1:\nu].
\end{align*}
\end{lemma}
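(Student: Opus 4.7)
The first claim is immediate: substituting $\rho=0$ into the defining equation of $\mathcal{S}$ from Definition~\ref{def:FamilyCubic} collapses everything except the trailing $x_0x_1x_2$, so the fiber over $\rho=0$ is the normal crossing union of three coordinate hyperplanes. The plan for the second claim is to identify the $27$ limiting lines by combining three ``free'' lines visible directly from the equation with $24$ further lines accessible through Cayley's classical parametrization.

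For the free lines, observe that the plane $\{x_3=0\}$ is a tritangent of $S(\mathbf{r})$ for every $\mathbf{r}\in T$: substituting $x_3=0$ into the surface equation reduces it to $x_0x_1x_2=0$. Hence this plane cuts $S(\mathbf{r})$ in the three fixed lines $\{x_3=x_0=0\}=\overline{B_1C_1}$, $\{x_3=x_1=0\}=\overline{A_1C_1}$, $\{x_3=x_2=0\}=\overline{A_1B_1}$, which specialize to themselves as $\rho\to 0$ and account for three of the claimed $27$ limit lines.

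For the remaining $24$ lines, I would invoke the explicit parametrization of the $45$ tritangents of $S(\mathbf{r})$ given in Cayley \cite{Cay49} and refined by Naruki--Sekiguchi \cite{NS80}: each tritangent is cut out by a linear form in $x_0,\dots,x_3$ whose coefficients are rational functions of $(\lambda,\mu,\nu,\rho)$, and the three lines it contains are each specified by a further linear equation. Substituting $\rho=0$ in each parametrization yields a line in $\{x_0x_1x_2=0\}$. A substantial reduction is provided by the manifest $S_3$-symmetry of the family that simultaneously permutes $(x_0,x_1,x_2)$ and $(\lambda,\mu,\nu)$: it suffices to treat the $9$ limit-lines that lie on a single coordinate plane, say $H_2=\{x_2=0\}$. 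These $9$ lines form a $3\times 3$ grid connecting three points on the axis $\ell_{12}=\{x_1=x_2=0\}$ to three points on the axis $\ell_{02}=\{x_0=x_2=0\}$, and the content of the lemma reduces to showing that these two triples are $\{A_1,A_2,A_3\}$ and $\{B_1,B_2,B_3\}$ respectively. Since $A_1,B_1$ are already produced by the $x_3=0$ tritangent, this amounts to identifying the remaining four points $A_2,A_3,B_2,B_3$, which is a single explicit substitution once two specific non-trivial tritangents are written out.

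The main obstacle is not conceptual but bookkeeping: matching the classical Schl\"afli labels $a_i, b_j, c_{ij}$ used in Cayley's tabulation of tritangents (cf.\ Example~\ref{stablepaircubicsurfacewith27lines}) to the labels $A_i, B_j, C_k$ of the limit configuration, in a way that makes the grid structure on each coordinate plane and the $S_3\times S_3$ symmetry transparent. Once this correspondence is set up, the appearance of the specific coordinate values $0,1,\lambda$ (resp.\ $0,1,\mu$ and $0,1,\nu$) in the positions of $A_i$ (resp.\ $B_j,C_k$) drops out of the residual dependence of the tritangent equations on $(\lambda,\mu,\nu)$ after setting $\rho=0$, since these are the only parameters that survive in the limit.
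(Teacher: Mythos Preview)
Your proposal is correct and takes essentially the same approach as the paper: both rely on the explicit tritangent equations from \cite[Table~1]{Nar82} (the paper writes each of the $27$ lines as the intersection of two tritangents, as in Table~\ref{tab:27lines}) and compute their limits as $\rho\to 0$. Your observation of the $S_3$-symmetry simultaneously permuting $(x_0,x_1,x_2)$ and $(\lambda,\mu,\nu)$ is a valid shortcut that the paper does not exploit---it simply tabulates all $27$ limits directly---but the underlying computation is the same.
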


\begin{proof}
Each one of the $27$ lines of $S(\mathbf{r})$ is the complete intersection of two tritangents (see Table~\ref{tab:27lines}, where we use the notational conventions of \cite{Cay49}). Then, for each one of the intersections, we compute the limit for $\rho\to0$ of the two tritangents. The explicit equations of the tritangents can be found in \cite[Table 1]{Nar82} (note, the coordinates there are $[X:Y:Z:W]$, while we use $[x_0:x_1:x_2:x_3]$). The limit lines can be found in the columns ``Limit" of Table~\ref{tab:27lines}.

We illustrate this calculation explicitly for two of the $27$ lines. The line $a_1$ is cut out by $x_0 = x_3 = 0$. Therefore, its limit is the line $\overline{B_1C_1}$.
The line $a_2$ is the intersection of the tritangents $(p_{,})$ and $(\theta)$, whose equations are
\begin{align*}
(p_{,}) 
&\colon
x_0 + \mu \rho x_1 + \nu \rho x_2 -
\rho (\rho - 1)(
\lambda \mu \nu \rho + \mu \nu - \mu - \mu
)x_3=0,
\\
(\theta) &\colon
\lambda x_0
+
\mu x_1 
+ 
\nu x_2
-
\left( 
(\rho -1)(\lambda \mu \nu \rho -1)
-
\rho(\lambda -1)(\mu -1)(\nu -1)
\right)
x_3=0.
\end{align*}
Then, it holds that 
\[
\lim_{\rho \to 0}  a_2 
=
\lim_{\rho \to 0}  ((p_{,}) \cap (\theta)) 
=
\{
x_0 = 
\lambda x_0+\mu x_1 + \nu x_2 - x_3
=
0
\}
=
\overline{B_3C_3}.\qedhere
\]
\end{proof}


\begin{table}
    \centering
    \caption{Using Cayley's notation in \cite{Cay49}, the table lists the $27$ lines, realizes each line as the intersection of two trintagents, and gives the limit for $\rho\to0$.
    }
    \renewcommand{\arraystretch}{1.4}
    \begin{tabular}{ |c|c|c|| c|c|c|}
    \hline   
    Lines & Tritangents & Limit & Lines & Tritangent & Limit
        \\ \hline 
        $a_1$ & (w)  $(x)$ & $\overline{B_1C_1}$
        & $a_6$ & (x) $(r_{,})$ & $\overline{A_2B_1}$
        \\ \hline
        $b_1$ & (w)  $(y)$ & $\overline{A_1C_1}$
&         
               $b_6$ & (y) $(p_{,})$ & $\overline{B_2C_1}$
        \\ \hline
        $c_1$ & (w)  $(z)$ & $\overline{A_1B_1}$
&       $c_6$ & (z) $(q_{,})$ & $\overline{A_1C_2}$
        \\ \hline \hline
        $a_2$ & $(p_{,})$  $(\theta)$ & $\overline{B_3C_3}$
        & 
        $a_7$ &         $(q_{,})$ (x)        & $\overline{A_2C_1}$
        \\ \hline
        $b_2$ & $ (q_{,}) $  
        $(\theta)$ & $\overline{A_3C_3}$
        & 
        $b_7$ & $(r_{,})$  (y) & $\overline{A_1B_2}$         
        \\ \hline
        $c_2$ & $(r_{,})$ 
        $(\theta)$ & $\overline{A_3B_3}$
        & 
        $c_7$ & $(p_{,})$ (z) & $\overline{B_1C_2}$
        \\ \hline \hline 
        $a_3$ & 
        $(\overline{p}_{,})$ $(\overline{\theta})$
        & $\overline{B_2C_2}$
        & 
        $a_8$ &         $(\overline{q}_{,})$ $(\overline{\mathrm{x}})$
        & $\overline{A_3C_1}$

        \\ \hline
        $b_3$ & $(\overline{q}_{,})$  $(\overline{\theta})$ & $\overline{A_2C_2}$ 
        & 
        $b_8$ & $(\overline{r}_{,})$  $(\overline{\mathrm{y}})$ & $\overline{A_1B_3}$ 
        \\ \hline  
        $c_3$ & $(\overline{r}_{,})$ $(\overline{\theta})$& $\overline{A_2B_2}$
        & 
        $c_8$ & $(\overline{p}_{,})$ $(\overline{\mathrm{z}})$& $\overline{B_1C_3}$
        \\ \hline \hline
        $a_4$ & $(x)$ $(g)$ & $\overline{B_3C_2}$
        & 
         $a_9$ & $(\overline{r}_{,})$ 
        $(\overline{\mathrm{x}})$
        &  $\overline{A_3B_1}$        
        \\ \hline
        $b_4$ & $(y)$ $(\overline f)$ & $\overline{A_2C_3}$
        & 
          $b_9$ & $(\overline{p}_{,})$
        $(\overline{\mathrm{y}})$
        & $\overline{B_3C_1}$
        \\ \hline
        $c_4$ & $(z)$ $(f)$ & $\overline{A_3B_2}$ & 
         $c_9$ &$(\overline{q}_{,})$ 
        $(\overline{\mathrm{z}})$
        & 
        $\overline{A_1C_3}$
        \\ \hline \hline
        $a_5$ & $(x)$ $(\overline{g})$ & $\overline{B_2C_3}$ &
        & &
        \\ \hline
               $b_5$ & $(y)$ $(f)$ & $\overline{A_3C_2}$
               & & &
        \\ \hline
        $c_5$ & $(z)$ $(\overline{f})$ & $\overline{A_2B_3}$
        & & &
         \\ \hline        
    \end{tabular}
    \label{tab:27lines}
\end{table}


\subsection{Family of stable pairs over \texorpdfstring{$\overline{\mathbf{N}}$}{Lg}}

\begin{definition}\label{def:Definition}
Let $\overline{\mathcal{S}}\rightarrow\overline{\mathbf{N}}$ be Naruki's family, which agrees with $\mathcal{X}$ over $\mathbf{Y}_\times$ (see Definition~\ref{definitionksbaweight1/9+epsilon}). Endow $\overline{\mathcal{S}}$ with a divisor $\overline{\mathcal{B}}$ given by the Zariski closure of $\mathcal{B}$ in $\overline{\mathcal{S}}$. In what follows, we show that
\[
\left(\overline{\mathcal{S}},\left(\frac{1}{9}+\epsilon\right)\overline{\mathcal{B}}\right)\rightarrow\overline{\mathbf{N}}
\]
is a proper flat family of stable pairs, and $\overline{\mathcal{B}}$ is relative over the base $\overline{\mathbf{N}}$.
\end{definition}

\begin{proposition}
\label{narukifamilyisafamilyofstablepairs}
Consider the family $\left(\overline{\mathcal{S}},\left(\frac{1}{9}+\epsilon\right)\overline{\mathcal{B}}\right)\rightarrow\overline{\mathbf{N}}$. Then for all $x\in\overline{\mathbf{N}}$, the fiber $\left(\overline{\mathcal{S}}_x,\left(\frac{1}{9}+\epsilon\right)\overline{\mathcal{B}}_x\right)$ is a stable pair.
\end{proposition}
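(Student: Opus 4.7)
The plan is to verify the stable-pair conditions fiber by fiber, using the stratification of $\overline{\mathbf{N}}$ into the open dense $\mathbf{Y}_\times$, the Eckardt locus in $\mathbf{Y}\setminus\mathbf{Y}_\times$, the divisors of type $A_1^k$ ($k=1,2,3,4$), the type $N$ exceptional divisors of $\overline{\mathbf{N}}\to\overline{\mathbf{Y}}_{\GIT}$, and their intersections $(A_1^k,N)$. Over $\mathbf{Y}_\times$, stability is Example~\ref{stablepaircubicsurfacewith27lines}. Over an Eckardt point, $S$ is smooth but three of the $27$ lines are concurrent; with weight $\tfrac{1}{9}+\epsilon$ this contributes a point-coefficient $\tfrac{1}{3}+3\epsilon\ll 2$, which is log canonical, and the ampleness computation $K_S+(\tfrac{1}{9}+\epsilon)B\sim 9\epsilon H|_S$ persists. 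For a fiber on an $A_1^k$ stratum away from $N$, the cubic surface is normal with $k$ nodes, hence $\mathbb{Q}$-factorial and demi-normal; the scheme-theoretic limit of the $27$ lines (read off from Naruki's equation in Definition~\ref{def:FamilyCubic}) has each node carrying six coalesced pairs of lines, appearing as six double lines through the node, in agreement with Table~\ref{ksbastabledegenerations27linesweight1/9+e}. Log canonicity is then checked on the minimal resolution: each exceptional $(-2)$-curve has discrepancy $0$ and meets the proper transforms of the six double lines transversely at distinct points with coefficient $2(\tfrac{1}{9}+\epsilon)<1$.

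The substantive step is the type $N$ stratum. By Lemma~\ref{limit27}, the fiber is $X=\{x_0x_1x_2=0\}\subseteq\mathbb{P}^3$, a degree-$3$ hypersurface that is Cohen-Macaulay (hence $S_2$) and nodal in codimension one, i.e.\ demi-normal; $K_X$ is Cartier because $X$ is a hypersurface. The normalization is $X^\nu=\mathbb{P}^2\sqcup\mathbb{P}^2\sqcup\mathbb{P}^2$, and on each component $E^\nu$ is two lines meeting at a point, while $\nu_*^{-1}\overline{\mathcal{B}}_x$ is the $3\times 3$ grid of lines furnished by Lemma~\ref{limit27}. Log canonicity of $(\mathbb{P}^2,E^\nu+(\tfrac{1}{9}+\epsilon)\nu_*^{-1}\overline{\mathcal{B}}_x)$ then reduces to a numerical check at the six special points $A_i,B_j,C_k$ on each component — where one conductor line meets three grid lines for a total coefficient $1+3(\tfrac{1}{9}+\epsilon)<2$ — and at the triple point of $X^\nu$, where only the two conductor lines meet. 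Ampleness follows from adjunction on each component:
\[
(K_X+(\tfrac{1}{9}+\epsilon)\overline{\mathcal{B}}_x)|_{\mathbb{P}^2}\;\sim\;K_{\mathbb{P}^2}+E^\nu+(\tfrac{1}{9}+\epsilon)\nu_*^{-1}\overline{\mathcal{B}}_x\;\sim\;(-3+2+1+9\epsilon)H\;=\;9\epsilon H.
\]

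For the mixed $(A_1^k,N)$ strata the same analysis applies, but $\nu_*^{-1}\overline{\mathcal{B}}_x$ now contains lines of multiplicity $2$ or $4$, obtained by iterating the limit of Lemma~\ref{limit27} along the extra directions in Naruki's family (i.e.\ sending some of $\lambda,\mu,\nu$ to the values giving $A_1$ degenerations in addition to $\rho\to 0$). The total degree on each $\mathbb{P}^2$ component remains $9$, so the ampleness computation is unchanged; the worst-case lc-check occurs where a quadruple line crosses a conductor line, contributing $1+4(\tfrac{1}{9}+\epsilon)=\tfrac{13}{9}+4\epsilon<2$ for small $\epsilon$, which is still log canonical.

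The main obstacle is the explicit identification, from the family equation of Definition~\ref{def:FamilyCubic}, of the scheme-theoretic fiber $\overline{\mathcal{B}}_x$ over each of the eight boundary strata — in particular, verifying the precise multiplicities and incidence patterns in Table~\ref{ksbastabledegenerations27linesweight1/9+e}. This requires iterating the procedure of Lemma~\ref{limit27} in carefully chosen one-parameter families approaching each $(A_1^k,N)$ stratum and tracking which pairs or quadruples of the $27$ lines coalesce. Once these limit configurations are established, all remaining verifications are purely local numerical computations, and the uniform adjunction identity $K_X+(\tfrac{1}{9}+\epsilon)\overline{\mathcal{B}}_x\sim 9\epsilon H|_X$ gives ampleness across the entire boundary.
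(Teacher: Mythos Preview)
Your overall strategy matches the paper's: stratify $\overline{\mathbf{N}}$, verify ampleness via the uniform identity $K_{S_0}+(\tfrac{1}{9}+\epsilon)B_0\sim 9\epsilon H|_{S_0}$, and check log canonicity locally. However, two of your local checks are inaccurate as written.

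\textbf{The $A_1^k$ strata for $k\geq 2$.} Your description ``each node carrying six coalesced pairs of lines, appearing as six double lines through the node'' is only correct for $k=1$. For $k=2,3,4$ the limit $B_0$ contains lines of multiplicity $4$ (those joining two nodes), as you yourself cite from Table~\ref{ksbastabledegenerations27linesweight1/9+e}. The paper establishes these multiplicities by quoting \cite[Proposition~4.1]{Tu05} and then checks, via Lemma~\ref{logcanonicityconcurrentlinesthroughA1}, that the total weight of lines through each node is $12(\tfrac{1}{9}+\epsilon)=\tfrac{4}{3}+12\epsilon<2$. Your statement that the exceptional $(-2)$-curve ``has discrepancy $0$'' is also misleading: the discrepancy for the \emph{pair} is $-\tfrac{1}{2}\cdot 12(\tfrac{1}{9}+\epsilon)$, and it is precisely this number being $\geq -1$ that gives log canonicity. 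Your resolution-based check, once corrected to allow for quadruple lines with coefficient $4(\tfrac{1}{9}+\epsilon)<1$, does go through --- but the argument as written does not cover $k\geq 2$.

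\textbf{The $(A_1^k,N)$ strata.} Your ``worst case'' is not the worst case. Specializing Table~\ref{limitlinesonthreeplanes} at $\lambda=\mu=0$ (type $(A_1^2,N)$, component $H_2$) shows that at the point $[0:1:0:0]$ the conductor line $\{x_0=0\}$, the quadruple line $\{x_3=0\}$, \emph{and} the double line $\{-x_0+x_3=0\}$ all meet, giving total coefficient $1+6(\tfrac{1}{9}+\epsilon)=\tfrac{5}{3}+6\epsilon$, not $\tfrac{13}{9}+4\epsilon$. The same configuration recurs on every component for $(A_1^3,N)$. The bound is still below $2$, so the conclusion survives, but your argument does not establish it; you need the actual incidence analysis the paper carries out case by case using the explicit equations in Table~\ref{limitlinesonthreeplanes}.
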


\begin{proof}
For simplicity of notation, denote $\left(\overline{\mathcal{S}}_x,\left(\frac{1}{9}+\epsilon\right)\overline{\mathcal{B}}_x\right)$ by $\left(S_0,\left(\frac{1}{9}+\epsilon\right)B_0\right)$. If $x\in\mathbf{Y}_\times$, then we already know that $\left(S_0,\left(\frac{1}{9}+\epsilon\right)B_0\right)$ is a stable pair by Example~\ref{stablepaircubicsurfacewith27lines}, so from now on in the proof we consider $x\in\overline{\mathbf{N}}\setminus\mathbf{Y}_\times$. Given the length of the proof, we split it into two parts: in part $1$ we assume $x$ is not contained in a type $N$ divisor, and in part $2$ we assume otherwise.

\textbf{Part 1.} Assume $x$ is not in a type $N$ divisor, in particular $S_0$ is an irreducible cubic surface with at most four $A_1$ singularities. In all these cases, it holds that the log canonical divisor of the pair is ample because
\[
K_{S_0}+\left(\frac{1}{9}+\epsilon\right)B_0\sim-H|_{S_0}+\left(\frac{1}{9}+\epsilon\right)H|_{S_0}=9\epsilon H|_{S_0},
\]
where $H\subseteq\,\mathbb{P}^3$ is a hyperplane. $K_{S_0}\sim-H|_{S_0}$ follows from adjunction, which can be applied because $S_0$ has at worst $A_1$ singularities. To prove that $B_0\sim-9H|_{S_0}$ we argue as follows. The equation for $S_0$ is given in Definition~\ref{def:FamilyCubic} for appropriate values of $\lambda,\mu,\nu,\rho$. The tritangents of $S_0$ can also be expressed in terms of these constants by \cite[Table 1]{Nar82}. As we discussed in Example~\ref{stablepaircubicsurfacewith27lines}, in the general case there are $9$ specific tritangents such that each one contains exactly three of the $27$ lines. By specializing the equations of these planes to the $\lambda,\mu,\nu,\rho$ giving $S_0$, we obtain that $B_0$ is given by nine hyperplane sections (notice that in this case lines can possibly acquire multiplicities).

In what follows we prove that $\left(S_0,\left(\frac{1}{9}+\epsilon\right)B_0\right)$ is log canonical. Let us first consider the case where $S_0$ is smooth. By \cite[Proposition 4.1 (iii)]{Tu05} we know that every line in $B_0$ has multiplicity $1$. Moreover, for every point $p\in S_0$, we have at most three lines in $S_0$ passing through $p$. This is because the lines in $S_0$ through $p$ are contained in the tangent plane to $S_0$ at $p$, and this plane intersects $S$ in a curve of degree $3$. Therefore, by Lemma~\ref{logcanonicityconcurrentlines} we have that $\left(S_0,\left(\frac{1}{9}+\epsilon\right)B_0\right)$ is log canonical, and hence stable.

Now assume $x$ is in a boundary stratum of type $A_1,A_1^2,A_1^3$, or $A_1^4$. Recall that these correspond to $S_0$ having exactly $1,2,3$, or $4$ singular points of type $A_1$, and no other singularities. To prove that $\left(S_0,\left(\frac{1}{9}+\epsilon\right)B_0\right)$ is log canonical, the following facts are crucial. Let $\ell$ be a line in $S_0$.

\begin{itemize}

\item If $\ell$ contains exactly one singular point, then $\ell$ is of multiplicity $2$ (\cite[Proposition 4.1 (i) (a)]{Tu05}).

\item If $\ell$ contains two singular points, then $\ell$ is of multiplicity $4$ (\cite[Proposition 4.1 (ii) (a)]{Tu05}).

\item If $\ell$ does not contain any singularity, then $\ell$ is of multiplicity $1$ (\cite[Proposition 4.1 (iii)]{Tu05}).

\end{itemize}
We now argue by cases on the number of $A_1$ singularities $S_0$ has. In what follows, we refer to the proof of \cite[Proposition 4.1]{Tu05}.

\begin{enumerate}

\item If $S_0$ has exactly one $A_1$ singularity, then there exist exactly six lines of multiplicity two by \cite{Tu05}. Using the above facts, these double lines have to pass through the $A_1$ singularity. Moreover, the remaining $15$ lines avoid the $A_1$ singularity. It now follows that the pair $\left(S_0,\left(\frac{1}{9}+\epsilon\right)B_0\right)$ is log canonical by Lemma~\ref{logcanonicityconcurrentlinesthroughA1} and Lemma~\ref{logcanonicityconcurrentlines}, because the sum of the weights of the lines through the $A_1$ singularity add up to strictly less than $2$, and the same holds for any smooth point of $S_0$.

\item Assume $S_0$ has exactly two $A_1$ singularities. Then by \cite{Tu05} there exist exactly eight lines of multiplicity two, one line of multiplicity four, and the remaining lines avoid the singularities and have multiplicity one. The quadruple line has to pass through both the $A_1$ singularities, while each one of the double lines passes through exactly one of the two $A_1$ singularities. Next we want to understand how many double lines pass through each $A_1$ singularity. The cubic surface $S_0$ arises as follows. Let $S_1$ be the blow up $\mathbb{P}^2$ at four points in general linear position. Let $S_2$ be the blow up of $S_1$ at two points on two different exceptional divisors. Then $S_0$ is obtained by contracting the two $(-2)$-curves on $S_2$ (for this construction, see \cite[\S9.2.2]{Dol12}). From the symmetry of this construction, it follows that we must have exactly four double lines through each of the two $A_1$ singularities. As in the previous case, we can conclude that $\left(S_0,\left(\frac{1}{9}+\epsilon\right)B_0\right)$ is log canonical again by applying Lemma~\ref{logcanonicityconcurrentlinesthroughA1} and Lemma~\ref{logcanonicityconcurrentlines}, regardless of the behavior of the remaining seven multiplicity one lines away from the singularities.

\item Now consider the case where $S_0$ has exactly three $A_1$ singularities. Then by \cite{Tu05} there exist exactly six lines with multiplicity two, three lines with multiplicity four, and the remaining lines avoid the singularities and have multiplicity one. The three lines with multiplicity four have to contain two $A_1$ singularities each, while each one of the double lines has to pass through exactly one of the $A_1$ singularities. In particular, we must have exactly one quadruple line through each pair of $A_1$ singularities (otherwise, we would have a line with multiplicity at least eight). Let us understand more precisely how many double lines pass through each $A_1$ singularity. By \cite[\S9.22]{Dol12}, such a cubic surface arises as follows. Let $S_1$ be the blow up $\mathbb{P}^2$ at three points in general linear position. Let $S_2$ be the blow up of $S_1$ at three points on the three different exceptional divisors. Then $S_0$ is obtained by contracting the three $(-2)$-curves on $S_2$. By the symmetry of this construction, we must have that there are exactly two double lines through each $A_1$ singularity. Again, we can conclude that $\left(S_0,\left(\frac{1}{9}+\epsilon\right)B_0\right)$ is log canonical by applying Lemma~\ref{logcanonicityconcurrentlinesthroughA1} and Lemma~\ref{logcanonicityconcurrentlines}, regardless of the behavior of the remaining three multiplicity one lines away from the singularities.

\item Finally, assume $S_0$ has exactly four $A_1$ singularities. Then by \cite{Tu05} there are exactly six lines of multiplicity four. The remaining 3 lines are reduced and avoid the $A_1$ singularities. By Lemma~\ref{logcanonicityconcurrentlinesthroughA1} and Lemma~\ref{logcanonicityconcurrentlines} we can conclude that the $\left(S_0,\left(\frac{1}{9}+\epsilon\right)B_0\right)$ is log canonical.

\end{enumerate}

\textbf{Part 2.} Assume $x$ is in a type $N$ divisor. Before analyzing the the pairs parametrized by divisors of type $N$, we set up the following notation. On a divisor of type $N$ the surface $S_0$ is given by $x_0x_1x_2=0$, and denote by $H_i$ the hyperplane given by $x_i=0$. By Lemma~\ref{limit27}, each one of these irreducible components contains exactly 9 of the $27$ lines. More precisely, $H_0$ contains
\[
\overline{B_1C_1},~\overline{B_1C_2},~\overline{B_1C_3},~\overline{B_2C_1},~\overline{B_2C_2},~\overline{B_2C_3},~\overline{B_3C_1},~\overline{B_3C_2},~\overline{B_3C_3},
\]
$H_1$ contains
\[
\overline{A_1C_1},~\overline{A_1C_2},~\overline{A_1C_3},~\overline{A_2C_1},~\overline{A_2C_2},~\overline{A_2C_3},~\overline{A_3C_1},~\overline{A_3C_2},~\overline{A_3C_3},
\]
and finally $H_2$ contains
\[
\overline{A_1B_1},~\overline{A_1B_2},~\overline{A_1B_3},~\overline{A_2B_1},~\overline{A_2B_2},~\overline{A_2B_3},~\overline{A_3B_1},~\overline{A_3B_2},~\overline{A_3B_3}.
\]
The equations of these lines, in terms of $\lambda,\mu,\nu$, in the corresponding plane containing them are listed in Table~\ref{limitlinesonthreeplanes}.

For simplicity of notation, let us set $L_{ij}^0=\overline{B_iC_j},L_{ij}^1=\overline{A_iC_j},L_{ij}^2=\overline{A_iB_j}$. The goal is to show that each pair $\left(S_0,\left(\frac{1}{9}+\epsilon\right)\sum_{i,j,k}L_{ij}^k\right)$ is stable. Let $\{a,b,c\}=\{0,1,2\}$ and define $D_a$ to be the conductor $(H_b+H_c)|_{H_a}$. We need to show that, for $a=0,1,2$, the following pair is stable:
\[
\left(H_a,D_a+\left(\frac{1}{9}+\epsilon\right)\sum_{i,j}L_{ij}^a\right).
\]
Notice that the log canonical divisor is ample because, if $\ell$ is a line in $H_a$, then
\[
K_{H_a}+D_a+\left(\frac{1}{9}+\epsilon\right)\sum_{i,j}L_{ij}^a\sim-3\ell+2\ell+9\left(\frac{1}{9}+\epsilon\right)\ell=9\epsilon\ell,
\]
which is ample. So we only need to check that the pair is log canonical. We distinguish the cases where $x$ is in the open part of a stratum of type $N,(A_1,N),(A_1^2,N)$, or $(A_1^3,N)$. The corresponding line arrangements on $S_0$ are pictured in Table~\ref{ksbastabledegenerations27linesweight1/9+e}.

\begin{enumerate}

\item Type $N$. By \cite[Proof of Lemma 11.4]{Nar82}, this corresponds to considering $\lambda,\mu,\nu\neq0,1$. As it can be argued from Table~\ref{limitlinesonthreeplanes}, for $a=0,1,2$, the lines $L_{ij}^a$ are all distinct. Moreover, the overall line arrangement has only six multiple points: three triple points along each component of the conductor $D_a$. Therefore, by Lemma~\ref{logcanonicityconcurrentlines} below, the pair $\left(H_a,D_a+\left(\frac{1}{9}+\epsilon\right)\sum_{i,j}L_{ij}^a\right)$ is log canonical.

\item Type $(A_1,N)$. By \cite[Proof of Lemma 11.4]{Nar82}, this corresponds to $\lambda=0$ and $\mu,\nu\neq0,1$. Specializing the lines in Table~\ref{limitlinesonthreeplanes} we can argue the following:
\begin{itemize}

\item On $H_0$ the situation is analogous to $H_0,H_1,H_2$ in the type $N$ case.

\item On $H_1$ we have exactly three multiple lines. These have multiplicity $2$ and pass through $A_1=A_3$. The pair $\left(H_a,D_a+\left(\frac{1}{9}+\epsilon\right)\sum_{i,j}L_{ij}^a\right)$ is log canonical again by Lemma~\ref{logcanonicityconcurrentlines}.

\item On $H_2$, the situation is analogous to $H_1$ above.

\end{itemize}

\item Type $(A_1^2,N)$. By \cite[Proof of Lemma 11.4]{Nar82}, this corresponds to considering $\lambda,\mu=0$ and $\nu\neq0,1$. Specializing the lines in Table~\ref{limitlinesonthreeplanes} we can argue the following:

\begin{itemize}

\item On $H_0$ and $H_1$ the situation is analogous to the arrangement on $H_1,H_2$ when we are in type $(A_1,N)$.

\item On $H_2$ we have one quadruple line, two double lines, and one other reduced line in general linear position with respect to the others. The two double lines intersect the quadruple line in two distinct points lying on the two irreducible components of the conductor. The pair $\left(H_a,D_a+\left(\frac{1}{9}+\epsilon\right)\sum_{i,j}L_{ij}^a\right)$ is log canonical again by Lemma~\ref{logcanonicityconcurrentlines}.

\end{itemize}

\item Type $(A_1^3,N)$. By \cite[Proof of Lemma 11.4]{Nar82}, this corresponds to considering $\lambda,\mu,\nu=0$. Specializing the lines in Table~\ref{limitlinesonthreeplanes} we can see that on each plane $H_a$ the line arrangement is analogous to the one on $H_2$ in the type $(A_1^2,N)$ case.\qedhere

\end{enumerate}
\end{proof}

\begin{table}
\centering
\caption{Limit lines on $S_0=H_0\cup H_1\cup H_2$ for pairs parametrized by a divisor of type $N$.}
\label{limitlinesonthreeplanes}
\renewcommand{\arraystretch}{1.4}
\begin{tabular}{|c|c|c|c|c|c|}
\hline
\multicolumn{2}{|c|}{Lines on $H_0=\{x_0=0\}$} & \multicolumn{2}{c|}{Lines on $H_1=\{x_1=0\}$} & \multicolumn{2}{c|}{Lines on $H_2=\{x_2=0\}$}
\\
\hline
$\overline{B_1C_1}$ & $x_3=0$ & $\overline{A_1C_1}$ & $x_3=0$ & $\overline{A_1B_1}$ & $x_3=0$
\\
\hline
$\overline{B_1C_2}$ & $-x_2+x_3=0$ & $\overline{A_1C_2}$ & $-x_2+x_3=0$ & $\overline{A_1B_2}$ & $-x_1+x_3=0$
\\
\hline
$\overline{B_1C_3}$ & $-\nu x_2+x_3=0$ & $\overline{A_1C_3}$ & $-\nu x_2+x_3=0$ & $\overline{A_1B_3}$ & $-\mu x_1+x_3=0$
\\
\hline
$\overline{B_2C_1}$ & $-x_1+x_3=0$ & $\overline{A_2C_1}$ & $-x_0+x_3=0$ & $\overline{A_2B_1}$ & $-x_0+x_3=0$
\\
\hline
$\overline{B_2C_2}$ & $-x_1-x_2+x_3=0$ & $\overline{A_2C_2}$ & $-x_0-x_2+x_3=0$ & $\overline{A_2B_2}$ & $-x_0-x_1+x_3=0$
\\
\hline
$\overline{B_2C_3}$ & $-x_1-\nu x_2+x_3=0$ & $\overline{A_2C_3}$ & $-x_0-\nu x_2+x_3=0$ & $\overline{A_2B_3}$ & $-x_0-\mu x_1+x_3=0$
\\
\hline
$\overline{B_3C_1}$ & $-\mu x_1+x_3=0$ & $\overline{A_3C_1}$ & $-\lambda x_0+x_3=0$ & $\overline{A_3B_1}$ & $-\lambda x_0+x_3=0$
\\
\hline
$\overline{B_3C_2}$ & $-\mu x_1-x_2+x_3=0$ & $\overline{A_3C_2}$ & $-\lambda x_0-x_2+x_3=0$ & $\overline{A_3B_2}$ & $-\lambda x_0-x_1+x_3=0$
\\
\hline
$\overline{B_3C_3}$ & $-\mu x_1-\nu x_2+x_3=0$ & $\overline{A_3C_3}$ & $-\lambda x_0-\nu x_2+x_3=0$ & $\overline{A_3B_3}$ & $-\lambda x_0-\mu x_1+x_3=0$
\\
\hline
\end{tabular}
\end{table}

\begin{lemma}
\label{logcanonicityconcurrentlinesthroughA1}
Let $X\subseteq\mathbb{P}^3$ be a quadric cone and let $L_1,\ldots,L_n\subseteq X$ be distinct lines passing through the $A_1$ singularity. Let $0<c_1,\ldots,c_n\leq1$ be rational numbers and denote by $c$ their sum. Then the pair $\left(X,\sum_{i=1}^nc_iL_i\right)$ is log canonical if and only if $c\leq2$.
\end{lemma}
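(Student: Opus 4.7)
The plan is to transfer the question to the minimal resolution $\pi\colon\widetilde X\to X$ of the $A_1$ vertex $p\in X$, whose exceptional locus is a smooth rational curve $E\cong\mathbb{P}^1$ with $E^{2}=-2$. Because $A_1$ is Du Val, $\pi$ is crepant: $K_{\widetilde X}=\pi^{*}K_X$. The key computation is that for every line $L_i$ through $p$ one has
\[
\pi^{*}L_i=\widetilde L_i+\tfrac{1}{2}E,
\]
where $\widetilde L_i$ denotes the strict transform. To see this, note that the unique hyperplane in $\mathbb{P}^{3}$ tangent to $X$ along $L_i$ cuts out $2L_i$ on $X$, so $2L_i$ is Cartier. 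Writing $\pi^{*}(2L_i)=2\widetilde L_i+mE$ and using the projection formula $\pi^{*}(2L_i)\cdot E=0$ together with $\widetilde L_i\cdot E=1$ and $E^{2}=-2$ yields $m=1$.

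Combining these identities gives the crepant pullback
\[
\pi^{*}\!\Big(K_X+\sum_{i=1}^{n}c_iL_i\Big)=K_{\widetilde X}+\sum_{i=1}^{n}c_i\widetilde L_i+\frac{c}{2}\,E,
\]
so the discrepancy of $E$ over $\bigl(X,\sum c_iL_i\bigr)$ equals $-c/2$, and the pair on $X$ is log canonical if and only if the pair $\bigl(\widetilde X,\sum c_i\widetilde L_i+\tfrac{c}{2}E\bigr)$ is. Next I verify that the boundary on the smooth surface $\widetilde X$ is simple normal crossings. Lines on the quadric cone $X$ through $p$ are parametrized by the smooth plane conic $E$, interpreted as the projectivized tangent cone of $X$ at $p$; distinct lines correspond to distinct points of $E$. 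Hence the $\widetilde L_i$ are pairwise disjoint smooth curves, each meeting $E$ transversely at exactly one point, so $\pi$ is a log resolution of the pair.

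On a smooth surface with SNC boundary, log canonicity is equivalent to requiring every coefficient to be at most $1$. The hypothesis $c_i\leq 1$ handles the strict transforms, while the condition on $E$ becomes $c/2\leq 1$, i.e.\ $c\leq 2$. Conversely, if $c>2$ the coefficient of $E$ strictly exceeds $1$, so $E$ has log discrepancy $1-c/2<0$ and the pair is not log canonical. The one step that requires genuine care is the pullback formula $\pi^{*}L_i=\widetilde L_i+\tfrac{1}{2}E$, which isolates $c=2$ as the sharp threshold; everything else is an application of the standard SNC log-canonicity criterion.
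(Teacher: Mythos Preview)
Your proof is correct and follows essentially the same approach as the paper: pass to the crepant minimal resolution of the $A_1$ point, compute $\pi^{*}L_i=\widetilde L_i+\tfrac{1}{2}E$ via the projection formula, and read off the discrepancy $-c/2$. If anything, your write-up is slightly more careful than the paper's, since you explicitly justify that $2L_i$ is Cartier before pulling back and verify that the strict transforms together with $E$ form an SNC divisor, whereas the paper leaves these checks implicit.
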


\begin{proof}
The blow up $f\colon\widehat{X}\rightarrow X$ at the $A_1$ singularity provides a log resolution of $\left(X,\sum_{i=1}^nc_iL_i\right)$; denote by $E$ the exceptional divisor. First notice that for all $i$,
\[
f^*L_i=\widetilde{L}_i+xE,
\]
where $\widetilde{L}_i$ denotes the strict transform of $L_i$ and $x$ is some rational number. By multiplying both sides by $E$ and using the projection formula, we can compute that $x=\frac{1}{2}$. Now let us compute the discrepancy for the pair $\left(X,\sum_{i=1}^nc_iL_i\right)$. For some $a\in\mathbb{Q}$ we have that
\[
K_{\widehat{X}}+\sum_{i=1}^nc_i\widetilde{L}_i=f^*\left(K_X+\sum_{i=1}^nc_iL_i\right)+aE.
\]
We have that $K_{\widehat{X}}=f^*K_X$, so
\begin{align*}
\sum_{i=1}^nc_i\widetilde{L}_i=f^*\left(\sum_{i=1}^nc_iL_i\right)+aE\implies\sum_{i=1}^nc_i\widetilde{L}_i=\sum_{i=1}^nc_i\left(\widetilde{L}_i+\frac{1}{2}E\right)+aE\implies0=\frac{c}{2}E+aE,
\end{align*}
which implies that $a=-\frac{c}{2}$. So the pair is log canonical if and only if $c\leq2$.
\end{proof}

\begin{lemma}
\label{logcanonicityconcurrentlines}
Let $L_1,\ldots,L_n$ be distinct lines passing through the origin of $\mathbb{A}^2$. Let $0<c_1,\ldots,c_n\leq1$ be rational numbers and denote by $c$ their sum. Then the pair $\left(\mathbb{A}^2,\sum_{i=1}^nc_iL_i\right)$ is log canonical if and only if $c\leq2$.
\end{lemma}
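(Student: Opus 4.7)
The plan is to prove this by a single blow-up computation, since one blow-up of the origin already yields a log resolution. Let $f\colon Y\to\mathbb{A}^2$ be the blow-up at the origin, with exceptional divisor $E\cong\mathbb{P}^1$. Since the $L_i$ are distinct lines through the origin, their strict transforms $\widetilde{L}_i$ are smooth, pairwise disjoint, and each meets $E$ transversally in a single point (because the lines have distinct tangent directions). Hence the total divisor $E+\sum_{i=1}^n \widetilde{L}_i$ has simple normal crossings on $Y$, so $f$ is a log resolution of the pair $\left(\mathbb{A}^2,\sum_{i=1}^n c_i L_i\right)$.

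Next I would record the two standard identities on which the computation rests: because each $L_i$ has multiplicity one at the origin,
\[
f^*L_i \;=\; \widetilde{L}_i + E,
\]
and for the blow-up of a smooth surface at a point,
\[
K_Y \;=\; f^*K_{\mathbb{A}^2} + E.
\]
Setting $D=\sum_{i=1}^n c_i L_i$ and $\widetilde{D}=\sum_{i=1}^n c_i \widetilde{L}_i$, these combine to give
\[
K_Y + \widetilde{D} \;=\; f^*K_{\mathbb{A}^2} + E + \sum_{i=1}^n c_i\bigl(f^*L_i - E\bigr) \;=\; f^*(K_{\mathbb{A}^2}+D) + (1-c)E .
\]
So the discrepancy of $E$ equals $1-c$, and the only prime divisor extracted by the resolution whose discrepancy depends on $c$ is $E$.

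Since the strict transform $\widetilde{D}$ already has all coefficients $c_i\in(0,1]$, log canonicity of the pair is equivalent to $1-c\geq -1$, i.e.\ $c\leq 2$. If $c\leq 2$, the discrepancy $1-c\geq -1$ and the SNC condition together show the pair is log canonical; if $c>2$, the discrepancy of $E$ is $<-1$, so the pair fails to be log canonical. This gives the stated equivalence. The only subtlety, and what I would expect to double-check, is that one blow-up really suffices as a log resolution: this relies on the lines being \emph{distinct}, so that their tangent directions at the origin are distinct and their strict transforms do not create further singularities on $Y$; if some lines coincided, further blow-ups would be required and the discrepancy formula would change.
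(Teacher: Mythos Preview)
Your proof is correct and follows essentially the same approach as the paper: a single blow-up of the origin gives a log resolution, and the discrepancy of the exceptional divisor is computed to be $1-c$ via $f^*L_i=\widetilde{L}_i+E$ and $K_Y=f^*K_{\mathbb{A}^2}+E$. You add a bit more justification that one blow-up suffices, but the argument is otherwise identical.
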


\begin{proof}
The blow up $f\colon Y\rightarrow\mathbb{A}^2$ at the origin provides a log resolution of $\left(\mathbb{A}^2,\sum_{i=1}^nc_iL_i\right)$; denote by $E$ the exceptional divisor. First notice that for all $i$,
\[
f^*L_i=\widetilde{L}_i+E,
\]
where $\widetilde{L}_i$ denotes the strict transform of $L_i$. Then for some $a\in\mathbb{Q}$ we have that
\[
K_{Y}+\sum_{i=1}^nc_i\widetilde{L}_i=f^*\left(K_{\mathbb{A}^2}+\sum_{i=1}^nc_iL_i\right)+aE.
\]
Note that $K_Y=f^*K_{\mathbb{A}^2}+E=E$, so
\begin{align*}
&E+\sum_{i=1}^nc_i\widetilde{L}_i=f^*\left(\sum_{i=1}^nc_iL_i\right)+aE\\
\implies&\sum_{i=1}^nc_i\widetilde{L}_i=\sum_{i=1}^nc_i\left(\widetilde{L}_i+E\right)+(a-1)E\implies0=cE+(a-1)E,
\end{align*}
which implies that $a=1-c$. So the pair is log canonical if and only if $c\leq2$.
\end{proof}

\begin{proposition}
\label{properflatfamilywithrelativedivisor}
The family $\left(\overline{\mathcal{S}},\left(\frac{1}{9}+\epsilon\right)\overline{\mathcal{B}}\right)\rightarrow\overline{\mathbf{N}}$ is proper, flat, and the divisor $\overline{\mathcal{B}}$ is flat over the base $\overline{\mathbf{N}}$.
\end{proposition}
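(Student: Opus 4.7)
The plan is to verify the three claims in sequence. Properness is immediate: by Definition~\ref{def:FamilyCubic}, $\overline{\mathcal{S}}$ is a closed subscheme of $\mathbb{P}^3 \times \overline{\mathbf{N}}$ cut out by a single cubic equation, so both $\overline{\mathcal{S}} \to \overline{\mathbf{N}}$ and the restriction to the closed subscheme $\overline{\mathcal{B}}$ factor through the proper projection $\mathbb{P}^3 \times \overline{\mathbf{N}} \to \overline{\mathbf{N}}$.

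For flatness of $\overline{\mathcal{S}} \to \overline{\mathbf{N}}$, I would observe that every fiber is a cubic surface in $\mathbb{P}^3$---either a (possibly singular) cubic with at worst $A_1$ singularities, or the union of three coordinate hyperplanes $x_0 x_1 x_2 = 0$ (by Lemma~\ref{limit27}). These all have the same Hilbert polynomial, so constancy of the fiber Hilbert polynomial over the smooth (hence reduced) base $\overline{\mathbf{N}}$ forces flatness.

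Flatness of $\overline{\mathcal{B}}$ is proved by the same criterion applied to $\overline{\mathcal{B}}$ as a closed subscheme of $\mathbb{P}^3 \times \overline{\mathbf{N}}$. The key point is that on each fiber $\overline{\mathcal{S}}_x$ the divisor $\overline{\mathcal{B}}_x$ is Cartier and linearly equivalent to $9 H|_{\overline{\mathcal{S}}_x}$, where $H$ is the hyperplane class of $\mathbb{P}^3$: for $x \in \mathbf{Y}_\times$ this is Example~\ref{stablepaircubicsurfacewith27lines}, and for the remaining boundary types of $\overline{\mathbf{N}}$ it follows from the case-by-case analysis in the proof of Proposition~\ref{narukifamilyisafamilyofstablepairs} together with Lemma~\ref{limit27} and the multiplicity data in Tables~\ref{ksbastabledegenerations27linesweight1/9+e} and~\ref{limitlinesonthreeplanes}. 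Granted this, a Riemann--Roch computation on the irreducible cubic fibers (using $K_S = -H|_S$ and $H^2 = 3$) and a Mayer--Vietoris computation on the reducible fibers $H_0 \cup H_1 \cup H_2$ (whose double and triple intersections are $\mathbb{P}^1$ and a point, respectively) both yield $\chi(\mathcal{O}_{\overline{\mathcal{B}}_x}) = -108$, so that $\overline{\mathcal{B}}_x \subseteq \mathbb{P}^3$ has the fiberwise Hilbert polynomial $P(m) = 27m - 108$; flatness over $\overline{\mathbf{N}}$ follows.

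The main obstacle is the identification of the scheme-theoretic fiber $\overline{\mathcal{B}}_x$ with the Cartier divisor of class $9 H|_{\overline{\mathcal{S}}_x}$ whose reduced support is recorded in the aforementioned tables. This could fail a priori if the Zariski closure of $\mathcal{B}$ in $\overline{\mathcal{S}}$ acquires embedded primes along the boundary; ruling this out amounts to checking, in each of the nine boundary strata, that the multiplicities of $\overline{\mathcal{B}}_x$ along its irreducible components agree with the ones read off from Proposition~\ref{narukifamilyisafamilyofstablepairs} and Lemma~\ref{limit27}. This is a local and explicit computation in coordinates adapted to the Naruki--Sekiguchi normal form from Definition~\ref{def:FamilyCubic} along each codimension-one boundary component of $\overline{\mathbf{N}}$.
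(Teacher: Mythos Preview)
Your overall plan coincides with the paper's: properness is trivial, flatness of $\overline{\mathcal{S}}$ follows from constancy of the fiberwise Hilbert polynomial (the paper cites \cite[Lemma 10.12]{HKT09} instead, but your argument is fine), and flatness of $\overline{\mathcal{B}}$ is reduced to showing that every fiber has Hilbert polynomial $27m-108$.

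Where the two arguments diverge is exactly at the obstacle you flag.  You try to identify each scheme-theoretic fiber $\overline{\mathcal{B}}_x$ with a Cartier divisor of class $9H|_{\overline{\mathcal{S}}_x}$ and then compute $P(m)=27m-108$ by Riemann--Roch and Mayer--Vietoris.  But you do not carry out the identification; you only assert that it is a local check of multiplicities along components.  That is not enough: matching multiplicities along the one-dimensional components of $\overline{\mathcal{B}}_x$ does not by itself exclude embedded points, and nothing in Proposition~\ref{narukifamilyisafamilyofstablepairs} or Lemma~\ref{limit27} tells you about the scheme structure of $\overline{\mathcal{B}}_x$ beyond its support.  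So as written this step is a genuine gap.

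The paper sidesteps the identification problem altogether.  Since $\overline{\mathbf{N}}$ is reduced and $\overline{\mathcal{B}}\subset\mathbb{P}^3\times\overline{\mathbf{N}}$ is projective, the function $x\mapsto h^0(\mathcal{O}_{\overline{\mathcal{B}}_x}(m))$ is upper semi-continuous for each $m$; hence the Hilbert polynomial can only increase under specialization.  It therefore suffices to compute $P(m)$ at the generic point and at the two zero-dimensional strata types $A_1^4$ and $(A_1^3,N)$, and to check that the values agree.  The generic value $27m-108$ comes from the $135$ nodes of a normal-crossing configuration of $27$ lines; the two degenerate values are computed directly in Macaulay2 from the explicit ideals of the limit configurations (with the correct multiplicities built in).  Because the three computations all give $27m-108$, upper semi-continuity forces $P(m)\equiv 27m-108$ on all of $\overline{\mathbf{N}}$, and flatness follows.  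In particular, the upper semi-continuity step is what rules out embedded components without ever having to analyze the local scheme structure of $\overline{\mathcal{B}}$ along the boundary—this is the missing ingredient in your argument.
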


\begin{proof}
$\overline{\mathcal{S}}\rightarrow\overline{\mathbf{N}}$ is clearly proper because it is a morphism of proper varieties, and it is flat by \cite[Lemma 10.12]{HKT09}.

We now show that $\overline{\mathcal{B}}$ is flat over $\overline{\mathbf{N}}$. First, recall that for a smooth cubic surface whose $27$ lines are normal crossing, the $27$ lines intersect at exactly $135$ points (see \cite[Table 4.1]{Hun96}). This fact and \cite[Theorem 3.1]{Day09} imply that the Hilbert polynomial of the union of these $27$ lines in $\mathbb{P}^3$ is $27m-108$. By the upper semi-continuity of the dimension of cohomology, if the Hilbert polynomials of the most degenerate limits of the $27$ lines parametrized by $\overline{\mathbf{N}}$ are also $27m-108$, then we would have that all the fibers of $\overline{\mathcal{B}}\rightarrow\overline{\mathbf{N}}$ have the same Hilbert polynomial. Hence, $\overline{\mathcal{B}}\rightarrow\overline{\mathbf{N}}$ is flat. Up to the choice of marking, there are exactly two maximal degenerations for the $27$ lines, which are parametrized by the zero-dimensional strata of $\overline{\mathbf{N}}$, which are of type $A_1^4$ and $(A_1^3,N)$.

The cubic surface parametrized by a $0$-stratum of type $(A_1^3,N)$ is $x_0x_1x_2=0$ in $\mathbb{P}^3$ and the limit of the $27$ lines is discussed in the proof of Proposition~\ref{narukifamilyisafamilyofstablepairs} (see also Table~\ref{ksbastabledegenerations27linesweight1/9+e}). We can compute the Hilbert polynomial of this configuration of lines directly with Macaulay2 using the following code:

\begin{flalign*}
\mathtt{i1:} 
\;\; & 
\mathtt{QQ[x_0,x_1,x_2, x_3]};  & &
\\
\mathtt{i2:}
\;\;  &
\mathtt{
I_A
=
intersect(ideal(x_0,x_3^4), ideal(x_1,x_3^4), 
ideal(x_2,x_3^4); 
}
\\ 
\mathtt{i3:}
\;\; &
\mathtt{
I_B
=
intersect(ideal(x_0,x_1+x_2-x_3), 
ideal(x_1,x_0 + x_2 - x_3), 
ideal(x_2,x_0 + x_1 -x_3); 
}
\\
\mathtt{i4:} 
\;\; &
\mathtt{
I_C
=
intersect( 
ideal(x_0,(x_1 - x_3)^2), 
ideal(x_1, (x_2 -x_3)^2),
ideal(x_2, (x_0-x_3)^2); 
}
\\
\mathtt{i5:}
\;\; &
\mathtt{
I_D
=
intersect(
ideal(x_0, (x_2 -x_3)^2),
ideal(x_1, (x_0-x_3)^2), 
ideal(x_2,(x_1 - x_3)^2); 
}
\\
\mathtt{i6:}
\;\; &
\mathtt{
I
=
intersect(I_A, I_B, I_C,I_D);
}
\\
\mathtt{i7:}
\;\; &
\mathtt{
hilbertPolynomial(I,Projective=>false)
}
\end{flalign*}

The cubic surface parametrized by a $0$-stratum of type $A_1^4$ is the so called Cayley cubic surface, which is the vanishing locus of the following equation:
\[
x_0x_1x_2+x_0x_1x_3+x_0x_2x_3+x_1x_2x_3=0.
\]
The above equation is obtained from the family $\overline{\mathcal{S}}\rightarrow\overline{\mathbf{N}}$ in Definition~\ref{def:FamilyCubic} by taking the limit for $\lambda,\mu,\nu\to0$ and $\rho\to1$ (see \cite[Proof of Lemma 11.4]{Nar82}). The corresponding limits of the $27$ lines (see the proof of Proposition~\ref{narukifamilyisafamilyofstablepairs}, part 1 (4)) can be computed explicitly from Table~\ref{tab:27lines}: the three lines of multiplicity one are given by the intersection of the Cayley cubic with the plane $x_0+x_1+x_2+x_3=0$, and the six lines of multiplicity four are $x_i=x_j=0$ for all $i,j\in\{0,\ldots,3\}$, $i\neq j$. Again, we compute the Hilbert polynomial of this configuration of lines using Macaulay2:

\begin{flalign*}
\mathtt{i1:} 
\;\; & 
\mathtt{QQ[x_0,x_1,x_2, x_3]};  & &
\\
\mathtt{i2:}
\;\;  &
\mathtt{
I_H
=
intersect(ideal(x_0^2),ideal(x_1^2),ideal(x_2^2),ideal(x_3^2));
}
\\ 
\mathtt{i3:}
\;\; &
\mathtt{
I_S
=
ideal(
x_0x_1x_2+x_0x_1x_3+x_0x_2x_3+x_1x_2x_3
);
}
\\
\mathtt{i4:} 
\;\; &
\mathtt{
I_D
=
I_S+ideal(x_0+x_1+x_2+x_3);
}
\\
\mathtt{i5:}
\;\; &
\mathtt{
I
=
intersect(I_S + I_H, I_D);
}
\\
\mathtt{i6:}
\;\; &
\mathtt{
hilbertPolynomial(I,Projective=>false)
}
\end{flalign*}

In both cases the above calculations yield $27m-108$. So our claim follows.
\end{proof}


\subsection{Proof of Theorem~\ref{thm:NarukiModular}}

\begin{theorem}
The Naruki compactification $\overline{\mathbf{N}}$ is isomorphic to the normalization of the KSBA compactification $\overline{\mathbf{Y}}_{\frac{1}{9}+\epsilon}$.
\end{theorem}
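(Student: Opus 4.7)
The plan is to produce a morphism $\Psi\colon \overline{\mathbf{N}}\to \overline{\mathbf{Y}}_{\frac{1}{9}+\epsilon}$ from the family constructed in Definition~\ref{def:Definition}, lift it to the normalization, and then apply Zariski's Main Theorem. By Propositions~\ref{narukifamilyisafamilyofstablepairs} and~\ref{properflatfamilywithrelativedivisor}, the pair $\bigl(\overline{\mathcal{S}},(\tfrac{1}{9}+\epsilon)\overline{\mathcal{B}}\bigr)\to\overline{\mathbf{N}}$ is a proper flat family of stable pairs with $\overline{\mathcal{B}}$ flat over the base, and it restricts over the open locus $\mathbf{Y}_\times\subseteq\overline{\mathbf{N}}$ to the tautological family used to define $\overline{\mathbf{Y}}_{\frac{1}{9}+\epsilon}$ (Definition~\ref{definitionksbaweight1/9+epsilon}). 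By the representability of the relevant KSBA moduli functor \cite[Theorem 1.1]{HKT09}, the family induces a morphism $\Psi$ that agrees with the identity on $\mathbf{Y}_\times$; since $\overline{\mathbf{N}}$ is smooth (hence normal), $\Psi$ factors uniquely as $\widetilde{\Psi}\colon \overline{\mathbf{N}}\to \widetilde{\overline{\mathbf{Y}}}_{\frac{1}{9}+\epsilon}$ through the normalization, and $\widetilde{\Psi}$ is birational.

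The core of the argument is to show $\widetilde{\Psi}$ is finite. Since both source and target are proper, it suffices to establish quasi-finiteness, i.e.\ that distinct points $x,x'\in\overline{\mathbf{N}}$ never give rise to isomorphic marked stable pairs. I would argue this stratum by stratum using Theorem~\ref{thm:NarukiModular} and Table~\ref{ksbastabledegenerations27linesweight1/9+e}. First, distinct boundary strata of $\overline{\mathbf{N}}$ (the types $A_1,A_1^2,A_1^3,A_1^4,N,(A_1,N),(A_1^2,N),(A_1^3,N)$) are separated by the discrete invariants of the stable pair: the singularity type of $S_0$, the number of irreducible components of $S_0$, and the combinatorial type of the marked divisor $B_0$ (the positions and multiplicities of the lines, together with their labels). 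Second, within each stratum, the continuous moduli of the marked pair $(S_0,B_0)$ are encoded by the cross-ratios of labelled quadruples of lines --- on an irreducible component for type $N$ and its refinements, or on tritangent configurations in the $A_1^k$ types --- and these are exactly the coordinates realizing the Naruki embedding $\mathbf{Y}\hookrightarrow (\mathbb{P}^1)^{45}$ (\S\ref{narukicompactificationdefandfacts}) extended to the boundary by Lemma~\ref{limit27} and the analysis in the proof of Proposition~\ref{narukifamilyisafamilyofstablepairs}. Thus the marked pair recovers the point of $\overline{\mathbf{N}}$.

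Once $\widetilde{\Psi}$ is known to be finite and birational, with normal target, Zariski's Main Theorem \cite[Chapter III, Corollary 11.4]{Har77} implies $\widetilde{\Psi}$ is an isomorphism onto its image; since it is also surjective (being a proper dominant map between irreducible proper varieties of the same dimension), it is a global isomorphism, concluding the proof.

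The main obstacle will be the quasi-finiteness verification in the second paragraph: one must match, for every boundary stratum, the continuous parameters of the labelled stable pair with the cross-ratio parameters cutting out that stratum in $\overline{\mathbf{N}}$. For the $(\mathbb{P}^1)^3$-type strata of type $N$ this is essentially immediate from Table~\ref{limitlinesonthreeplanes}, but for the strata where $S_0$ is irreducible with $A_1^k$ singularities one must work harder, using the symmetric blow-up descriptions of the singular cubic surfaces (as in the proof of Proposition~\ref{narukifamilyisafamilyofstablepairs}, part~1) to pin down the labelled line arrangement from the marked stable pair.
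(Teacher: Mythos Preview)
Your outline is correct and follows the same global strategy as the paper: build a morphism from Naruki's family (via Propositions~\ref{narukifamilyisafamilyofstablepairs} and~\ref{properflatfamilywithrelativedivisor}), verify it is finite and birational, then invoke Zariski's Main Theorem. The difference lies in how finiteness is established.

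For the type $N$ divisors you proceed exactly as the paper does (and your sketch via cross-ratios from Table~\ref{limitlinesonthreeplanes} is precisely the paper's argument). For the $A_1^k$ strata, however, the paper avoids your proposed stratum-by-stratum reconstruction of the marked pair entirely. Instead it argues by contradiction: if a complete curve $C$ in a type $A_1$ divisor were contracted, then $C$ meets the complement $U$ of the type $N$ divisors, and the composition $\overline{\mathbf{N}}\to\overline{\mathbf{Y}}_{\GIT}\to \mathbb{P}(H^0(\mathbb{P}^3,\mathcal{O}(3)))/\!/\SL_4$ is \emph{finite} on $U$. Hence $C\cap U$ maps to a one-dimensional family of pairwise non-isomorphic cubic surfaces in the stable locus of the GIT quotient, so the underlying surfaces of the stable pairs along $C$ are already non-isomorphic---contradicting that $f(C)$ is a point. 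This bypasses the ``main obstacle'' you identify (matching cross-ratio parameters with moduli of labeled line arrangements on singular cubics) by reducing to the coarser invariant of the unmarked cubic surface, for which finiteness is known from the construction of $\overline{\mathbf{Y}}_{\GIT}$.

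Your approach would also succeed, since Naruki's compactification is by definition the closure in $(\mathbb{P}^1)^{45}$ and those 45 cross-ratios are indeed computable from the labeled line configuration on $S_0$; but carrying this out rigorously on each $A_1^k$ stratum is substantially more work than the paper's one-line reduction to the GIT map. Two minor points: you conflate quasi-finiteness with injectivity (the latter is what your argument targets, and it is stronger than needed); and citing Theorem~\ref{thm:NarukiModular} for the boundary description is circular in presentation---the content you need is already in Proposition~\ref{narukifamilyisafamilyofstablepairs}.
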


\begin{proof}
We start with the following observation. By Definition~\ref{definitionksbaweight1/9+epsilon} we know that $\mathbf{Y}_\times$ is a dense open subset of $\overline{\mathbf{Y}}_{\frac{1}{9}+\epsilon}$. However, the family over $\mathbf{Y}_\times$ in Definition~\ref{definitionksbaweight1/9+epsilon} can be extended to $\mathbf{Y}$ by the proof of Proposition~\ref{narukifamilyisafamilyofstablepairs}, step $1$. So $\mathbf{Y}\subseteq\overline{\mathbf{Y}}_{\frac{1}{9}+\epsilon}$ is a dense open subset.

By Proposition~\ref{narukifamilyisafamilyofstablepairs}, Proposition~\ref{properflatfamilywithrelativedivisor}, and from the fact that $\overline{\mathbf{Y}}_{\frac{1}{9}+\epsilon}$ is a coarse moduli space, there exists an induced morphism $f\colon\overline{\mathbf{N}}\rightarrow\overline{\mathbf{Y}}_{\frac{1}{9}+\epsilon}$. Notice that $f$ is an isomorphism on the common open subset $\mathbf{Y}$ parametrizing smooth cubic surfaces. The fact that $\mathbf{Y}$ is a dense open subset of $\overline{\mathbf{N}}$ follows from the definition of $\overline{\mathbf{N}}$. 

Let us prove that $f$ is a finite morphism. To do this, we only have to check that its fibers are finite. Let $C\subseteq\overline{\mathbf{N}}$ be a complete curve and assume by contradiction that $C$ is contracted by $f$ to a point. Then $C\subseteq D$, where $D$ is a boundary divisor of $\overline{\mathbf{N}}$. There are two cases to analyze.

\begin{enumerate}

\item $D$ is of type $N$. Hence, $D\cong(\mathbb{P}^1)^3$. Since $f|_D$ contracts a curve, $f$ contracts the divisor $D$. In particular, one of the three $\mathbb{P}^1$ components is contracted to a point, and we may assume without loss of generality it is the third one. We show that we can find $([\lambda:1],[\mu:1],[\nu:1]),([\lambda:1],[\mu:1],[\nu':1])\in\mathbb{P}^1\times\mathbb{P}^1\times\mathbb{P}^1$ parametrizing non-isomorphic stable pairs, producing a contradiction.

Start by considering any $\lambda,\mu,\nu\in\mathbb{C}^*$. If $S_0=\{x_0x_1x_2=0\}$, let $B_0$ be the $27$ lines on $S_0$ in Table~\ref{limitlinesonthreeplanes} corresponding to $\lambda,\mu,\nu$. In $H_0$, the lines $\overline{B_1C_2},\overline{B_2C_1},\overline{B_2C_2},\overline{B_2C_3}$ intersect the line $\overline{B_1C_1}$ at the following four points respectively:
\[
[0:1:0:0],~[0:0:1:0],~[0:-1:1:0],~[0:-\nu:1:0].
\]
Let $\beta$ be the cross-ratio of these four ordered points. Now choose $\nu'\in\mathbb{C}^*$ such that the cross-ratio $\beta'$ of the four points on $\overline{B_1C_1}$
\[
[0:1:0:0],~[0:0:1:0],~[0:-1:1:0],~[0:-\nu':1:0]
\]
is different from $\beta$. Let $B_0'$ be the $27$ lines on $S_0$ in Table~\ref{limitlinesonthreeplanes} corresponding to $\lambda,\mu,\nu'$. Then the pairs $\left(S_0,\left(\frac{1}{9}+\epsilon\right)B_0\right),\left(S_0,\left(\frac{1}{9}+\epsilon\right)B_0'\right)$ are not isomorphic.

\item $D$ is of type $A_1$. Denote by $U\subseteq\overline{\mathbf{N}}$ the complement of the $40$ type $N$ divisors in $\overline{\mathbf{N}}$ (recall that the morphism $\overline{\mathbf{N}}\rightarrow\overline{\mathbf{Y}}_{\GIT}$ is an isomorphism on $U$). From the previous part, we have that $C$ cannot be contained in a type $N$ divisor, so $C\cap U$ is nonempty. Consider the composition:
\[
g\colon\overline{\mathbf{N}}\rightarrow\overline{\mathbf{Y}}_{\GIT}\rightarrow\mathbb{P}(H^0(\mathbb{P}^3,\mathcal{O}(3)))/\!/_{\mathcal{O}(1)}\SL_4,
\]
which is a finite map on $U$. So the image of $C\cap U$ under $g$ is still one dimensional, and it parametrizes non isomorphic cubic surfaces because $g(C\cap U)$ is contained in the quotient by $\SL_4$ of the stable locus. It follows that $C\cap U$ parametrizes infinitely many distinct isomorphism classes of cubic surfaces, which contradicts the fact that $f(C)$ is a point.

\end{enumerate}

Since $f$ is a finite birational morphism, we have that $\overline{\mathbf{N}}$ is isomorphic to the normalization of $\overline{\mathbf{Y}}_{\frac{1}{9}+\epsilon}$ by Zariski's Main Theorem.
\end{proof}


\section{Extension criteria}

The following lemma allows us to compare the toroidal compactification of a ball quotient with another compactification which in our applications has geometric origin.
\begin{lemma}
\label{extensiontotoroidalwithnccompactificationandfreeactionandhyperplanearrangement}
Let $\Gamma\backslash\mathbb{B}$ be a ball quotient of dimension larger than or equal to $2$ and let $\mathcal{H}$ be a countable union of hyperplanes on $\mathbb{B}$ such that $\Gamma$ acts freely on $\mathbb{B}\setminus\mathcal{H}$. Let $\overline{M}$ be a smooth normal crossing compactification of a finite cover of $\Gamma\backslash(\mathbb{B}\setminus\mathcal{H})$ admitting a generically finite morphism $\phi\colon\overline{M}\rightarrow\overline{\Gamma\setminus\mathbb{B}}^{\bb}$.
Then there exists a morphism $\overline{M}\rightarrow\overline{\Gamma\backslash\mathbb{B}}^{\tor}$ inducing the following commutative diagram:
\begin{center}
\begin{tikzpicture}[>=angle 90]
\matrix(a)[matrix of math nodes,
row sep=2em, column sep=2em,
text height=2.5ex, text depth=0.25ex]
{\overline{M}&&\overline{\Gamma\backslash\mathbb{B}}^{\tor}\\
&\overline{\Gamma\backslash\mathbb{B}}^{\bb}.&\\};
\path[->] (a-1-1) edge node[]{}(a-1-3);
\path[->] (a-1-1) edge node[below left]{$\phi$}(a-2-2);
\path[->] (a-1-3) edge node[below right]{$\pi$}(a-2-2);
\end{tikzpicture}
\end{center}
\end{lemma}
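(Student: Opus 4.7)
The plan is to apply the toroidal extension theorem of Ash--Mumford--Rapoport--Tai \cite{AMRT75} (in the rank-one setting, where the toroidal fan is unique). Let $M_0\subseteq\overline{M}$ be the open subset realizing $\overline{M}$ as a normal crossing compactification of a finite étale cover of $\Gamma\backslash(\mathbb{B}\setminus\mathcal{H})$, and let $D:=\overline{M}\setminus M_0$. Since $\pi$ is an isomorphism over $\Gamma\backslash\mathbb{B}\subseteq\overline{\Gamma\backslash\mathbb{B}}^{\bb}$, the components of $D$ whose $\phi$-image lies in $\Gamma\backslash\mathbb{B}$ cause no difficulty: the lift through $\pi$ is simply $\phi$ locally. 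The actual extension problem is concentrated on the components $D_j\subseteq D$ whose image under $\phi$ lies in the cuspidal boundary of $\overline{\Gamma\backslash\mathbb{B}}^{\bb}$.

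The heart of the argument is to verify that the local monodromy of $M_0\to\Gamma\backslash\mathbb{B}$ around each cuspidal branch $D_j$ lies in the unipotent radical of the relevant parabolic. Fix a cuspidal $D_j$ and a cusp $c=\phi(D_j)$, with parabolic stabilizer $P_c=U_c\rtimes F_c\leq\Gamma$ ($U_c$ Heisenberg-type, $F_c$ finite Levi). Choose a general point $p\in D_j$ and a transverse analytic disc $\Delta\ni p$; the composition $\Delta^{*}\hookrightarrow M_0\to \Gamma\backslash(\mathbb{B}\setminus\mathcal{H})$ produces a monodromy generator $\gamma\in P_c$. After base-change by the cyclic cover $z\mapsto z^{|F_c|}$, the lifted map to $\mathbb{B}\setminus\mathcal{H}$ is well-defined and must accumulate on $c\in\partial\mathbb{B}$. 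If $\gamma$ had a non-trivial torsion (Levi) component, this component would be a finite-order element of $\Gamma$ acting non-trivially on the image of the lift; but every non-trivial torsion element of $\Gamma$ has fixed-point set in $\mathbb{B}$ contained in $\mathcal{H}$ by the free-action hypothesis, contradicting the fact that the lift takes values in $\mathbb{B}\setminus\mathcal{H}$. Hence $\gamma\in U_c$, i.e.\ the local monodromy is unipotent.

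With unipotent local monodromy established around every branch of $D$, the universal property of the toroidal compactification in \cite[Ch.~III]{AMRT75} (see also \cite[Thm.~7.29]{Nam80}) yields a unique morphism $\overline{M}\to\overline{\Gamma\backslash\mathbb{B}}^{\tor}$ extending the lift on $M_0$. Commutativity of the diagram is automatic: the two compositions $\pi\circ(\overline{M}\to\overline{\Gamma\backslash\mathbb{B}}^{\tor})$ and $\phi$ agree on the dense open $M_0$, hence on all of $\overline{M}$ by separatedness of $\overline{\Gamma\backslash\mathbb{B}}^{\bb}$.

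The main obstacle is precisely the unipotence verification in the second paragraph. The hypothesis $\dim\mathbb{B}\geq 2$ is used here so that a general transverse disc to a cuspidal component $D_j$ genuinely meets the free locus $\mathbb{B}\setminus\mathcal{H}$ (rather than being trapped entirely inside the ramification locus), making the contradiction with the free action meaningful. The finite-cover freedom in the hypothesis on $\overline{M}$ is exactly what is required to invoke the Mumford-style extension theorem without first having to pass to a neat subgroup of $\Gamma$ by hand.
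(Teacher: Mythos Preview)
Your approach—verify unipotent local monodromy and invoke the AMRT extension theorem—is quite different from the paper's. The paper instead passes to a neat subgroup $\Gamma_0\trianglelefteq\Gamma$, normalizes $\overline{M}$ in the function field of $\Gamma_0\backslash\mathbb{B}$ to obtain $\overline{M}_0$, shows $\overline{M}_0$ has only abelian quotient (hence log terminal) singularities, and then rules out indeterminacy of $\overline{M}_0\dashrightarrow\overline{\Gamma_0\backslash\mathbb{B}}^{\tor}$ by a birational-geometry argument: by Hacon--McKernan, the total transform of any indeterminacy point would be covered by rational curves, but the toroidal boundary at the neat level consists of abelian varieties (this is where $\dim\geq 2$ is actually used), which contain none. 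One then descends to $\Gamma$ via a formal lemma on extending rational maps through finite covers.

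Your unipotence argument has a genuine gap. You write $\Gamma_c = U_c\rtimes F_c$ and claim that if the monodromy $\gamma$ has nontrivial Levi part $f\in F_c$, then since $f$ is torsion in $\Gamma$ its fixed locus lies in $\mathcal{H}$, ``contradicting the fact that the lift takes values in $\mathbb{B}\setminus\mathcal{H}$.'' But there is no contradiction: the lift $\mathfrak{H}\to\mathbb{B}\setminus\mathcal{H}$ is equivariant for $\gamma$, not for $f$ alone, and in any case a torsion element can permute points of $\mathbb{B}\setminus\mathcal{H}$ freely while having its fixed set contained in $\mathcal{H}$—nothing forces the image of the lift to meet $\mathrm{Fix}(f)$. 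There is also the prior issue that $\Gamma_c$ need not split as a semidirect product, so the ``Levi component'' of $\gamma$ need not even lie in $\Gamma$. Separately, the AMRT/Namikawa extension statements you cite are formulated for neat arithmetic groups with smooth toroidal target; applying them directly to $\overline{\Gamma\backslash\mathbb{B}}^{\tor}$ for non-neat $\Gamma$ requires care you do not supply. This is exactly why the paper passes to $\Gamma_0$ first, at the cost of losing smoothness of the source and hence needing the Hacon--McKernan input rather than a clean monodromy criterion. Finally, your reading of the $\dim\geq 2$ hypothesis is off: the transverse disc lives in $\overline{M}$, where by assumption $M_0$ already maps to the free locus, so no genericity issue arises there.
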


\begin{proof}
Let $\Gamma_0 \trianglelefteq \Gamma$ be a neat subgroup. Consider the covering maps $\beta\colon
\overline{\Gamma_0\backslash\mathbb{B}}^{\bb}
\rightarrow\overline{\Gamma\backslash\mathbb{B}}^{\bb}$ and $\tau\colon\overline{\Gamma_0\backslash\mathbb{B}}^{\tor}\rightarrow\overline{\Gamma\backslash\mathbb{B}}^{\tor}$. Let $\overline{M}_{0}$ be the normalization of $\overline{M}$ in the function field of $\Gamma_0\backslash\mathbb{B}$, and denote by $\mu$ the morphism $\overline{M}_{0}\rightarrow\overline{M}$. Consider the rational map $g\colon\overline{M}\dashrightarrow\overline{\Gamma\backslash\mathbb{B}}^{\tor}$, which restricts to a finite cover of $\Gamma\backslash(\mathbb{B}\setminus\mathcal{H})$. We have that $g$ lifts to a rational map $g_0\colon\overline{M}_0\dashrightarrow\overline{\Gamma_0\backslash\mathbb{B}}^{\tor}$ giving a finite cover of $\Gamma_0\backslash(\mathbb{B}\setminus\mathcal{H})$. The morphisms we described fit in the following commutative diagram:

\begin{center}
\begin{tikzpicture}[>=angle 90]
\matrix(a)[matrix of math nodes,
row sep=2em, column sep=2em,
text height=2.5ex, text depth=0.25ex]
{\overline{M}_0&&\overline{\Gamma_0\backslash\mathbb{B}}^{\tor}\\
&\overline{\Gamma_0\backslash\mathbb{B}}^{\bb}&\\
\overline{M}&&\overline{\Gamma\backslash\mathbb{B}}^{\tor}\\
&\overline{\Gamma\backslash\mathbb{B}}^{\bb}.&\\};
\path[dashed,->] (a-1-1) edge node[above]{$g_0$}(a-1-3);
\path[->] (a-1-1) edge node[below left]{$\phi'$}(a-2-2);
\path[->] (a-1-3) edge node[below right]{$\pi'$}(a-2-2);
\path[dashed,->] (a-3-1) edge node[above left]{$g$}(a-3-3);
\path[->] (a-3-1) edge node[below left]{$\phi$}(a-4-2);
\path[->] (a-3-3) edge node[below right]{$\pi$}(a-4-2);
\path[->] (a-1-1) edge node[left]{$\mu$}(a-3-1);
\path[->] (a-1-3) edge node[right]{$\tau$}(a-3-3);
\path[->] (a-2-2) edge node[above right]{$\beta$}(a-4-2);
\end{tikzpicture}
\end{center}
We want to show that $g_0$ extends to $\overline{M}_0$, which implies that $g$ extends to $\overline{M}$ by Lemma~\ref{lemma:Extension}. This is done in two steps.

\textbf{Step 1.} Preliminarily, we show that $\overline{M}_0$ has only algebraic abelian quotient singularities. This is true because, by hypothesis, $\Gamma$ acts freely on $\mathbb{B}\setminus\mathcal{H}$ and $\overline{M}$ is smooth. Therefore, the finite map $\mu\colon\overline{M}_0\rightarrow\overline{M}$ branches along a divisor contained in $\phi^{-1}(\overline{\Gamma\backslash\mathcal{H}}^{\bb})$. Since $\overline{M}$ is simple normal crossing compactification of $\phi^{-1}(\Gamma\backslash(\mathbb{B}\setminus\mathcal{H}))$ by hypothesis, the branch divisor is simple normal crossing as well. So $\overline{M}_0$ has only algebraic abelian quotient singularities by Lemma~\ref{sing:Cover}.

\textbf{Step 2.} We show that $g_0$ extends to $\overline{M}_0$. First of all, we have that $g_0'=(\pi')^{-1}\circ\phi'$ extends $g_0$ to $(\phi\circ\mu)^{-1}(\Gamma\backslash\mathbb{B})$ ($\pi'$ is a blow up of the zero cusps of $\overline{\Gamma_0\backslash\mathbb{B}}^{\tor}$). Now, assume by contradiction that there exists a point $x\in\overline{M}_0$ in the indeterminacy locus of $g_0'$. Recall that if $W\subseteq X\times Y$ is the closure of the graph of a rational map $f\colon X \dashrightarrow Y$ and $p,q$ are the projections from $W$ to $X$ and $Y$ respectively, the \emph{total transform of $x\in X$} is defined as 
\[
\In(x):=q(p^{-1}(x))\subseteq Y.
\]
In our specific case, $\In(x)$ is contained in $\overline{\Gamma_0\setminus \mathbb{B}}^{\tor}\setminus\left( \Gamma_0\backslash \mathbb{B} \right)$, which is an union of abelian varieties $A_i$ because $\Gamma_0$ is neat (here is where we used that $\dim(\Gamma\backslash\mathbb{B})\geq2$). By step $1$ we have that $\overline{M}_0$ has abelian quotient singularities, which are log terminal. Therefore, the pair $(\overline{M}_0,0)$ is dlt (see \cite[page 43, (3)]{Kol13}), which implies by Lemma~\ref{lemmaRationalCurves} that $\In(x) \subseteq A_i$ is covered by rational curves. This is impossible because abelian varieties do not contain rational curves. So the map $g_0'$ extends to $\overline{M}_0$, implying that $g$ extends to $\overline{M}$.
\end{proof}

\begin{lemma}[{\cite[Theorem 2.23]{Kol07}}]
\label{sing:Cover}
Let $g:X \to Y$ be a finite and dominant morphism from a normal variety $X$ to a smooth variety $Y$, all defined over $\mathbb{C}$. Assume that there is a simple normal crossing divisor $D \subseteq Y$ such that $g$ is smooth over $Y \setminus D$.  Then $X$ has algebraic abelian quotient singularities.
\end{lemma}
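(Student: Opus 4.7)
The plan is to argue étale-locally on $Y$ and exhibit $X$ as a quotient by a finite abelian group of a smooth variety. Since $Y$ is smooth and $D$ is simple normal crossing, around any point $y \in Y$ one can choose étale coordinates $(y_1, \ldots, y_n)$ on a small neighborhood $U$ such that $D \cap U = \{y_1 \cdots y_k = 0\}$ for some $k \leq n$. Then $U \setminus D$ is isomorphic to $(\mathbb{C}^*)^k \times \mathbb{C}^{n-k}$, whose fundamental group is $\mathbb{Z}^k$, abelian. This is the source of the "abelian" in the conclusion.

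Next, because $g$ is finite and étale over $Y \setminus D$, the restriction $g^{-1}(U \setminus D) \to U \setminus D$ is a finite étale cover. Each connected component is classified by a finite-index subgroup $H \leq \mathbb{Z}^k$, so it is a Kummer-type cover of the form $(u_1, \ldots, u_n) \mapsto (u_1^{d_1}, \ldots, u_k^{d_k}, u_{k+1}, \ldots, u_n)$. The normalization of $U$ in the function field of this connected component is then identified (by purity of the branch locus together with Abhyankar's lemma, both valid in characteristic zero) with the quotient of a smooth affine chart by the finite abelian group $\mu_{d_1} \times \cdots \times \mu_{d_k}$ acting diagonally by roots of unity on the $u_i$. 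Since $X$ is the normalization of $Y$ in $K(X)$, this gives an étale-local description of $X$ at every preimage of $y$ as an algebraic abelian quotient singularity.

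The main technical obstacle is to ensure that étale-local Kummer covers really do assemble into the global normalization $X$, and in particular that $g$ cannot acquire wild ramification or ramification outside $D$ that would spoil the abelian picture. Here one invokes purity of the branch locus to conclude that any ramification of $g$ takes place in codimension one, so it is concentrated on $D$, and Abhyankar's lemma in the tame (characteristic-zero) setting to conclude that after a finite abelian base change the cover becomes étale. Together, these force the local model to be exactly the toric Kummer quotient described above, giving the asserted abelian quotient structure on $X$.
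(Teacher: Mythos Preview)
The paper does not supply a proof of this lemma at all; it is quoted verbatim as \cite[Theorem~2.23]{Kol07} and used as a black box inside the proof of Lemma~\ref{extensiontotoroidalwithnccompactificationandfreeactionandhyperplanearrangement}. Your sketch is essentially the standard argument that Koll\'ar gives there: pass to an \'etale chart where $D$ is the coordinate cross, observe that $\pi_1$ of the complement is $\mathbb{Z}^k$ (hence abelian), classify the connected components of the cover by finite-index sublattices, and identify the normalization with a simplicial affine toric variety, which is an abelian quotient of affine space.

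One small overstatement: a finite-index subgroup $H\leq\mathbb{Z}^k$ need not give a \emph{diagonal} Kummer cover $(u_i\mapsto u_i^{d_i})$; in general the torus map is monomial with matrix $A\in M_k(\mathbb{Z})$. Smith normal form lets you diagonalize $A$, but the required coordinate changes on the base torus need not extend across $D$. The honest statement is that the normalization is the affine toric variety for the standard orthant cone taken with respect to the finer lattice, which is simplicial and therefore an abelian quotient of $\mathbb{C}^k$ --- but not necessarily by a group acting diagonally in the original coordinates. This does not affect your conclusion, only the description of the local model. With that caveat, your argument is correct and matches the cited source.
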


\begin{lemma}[{\cite[Proposition 3.16]{CMGHL15}}]
\label{lemma:Extension}
Let $S$ be a scheme. Let $X$, $Y$ be integral locally Noetherian $S$-schemes. Assume
that $X$ is normal, and $Y$ is separated and locally of finite type over $S$. Consider the following commutative
diagram of morphisms and rational maps of $S$-schemes:
\begin{center}
\begin{tikzpicture}[>=angle 90]
\matrix(a)[matrix of math nodes,
row sep=2em, column sep=4em,
text height=1.5ex, text depth=0.25ex]
{X'&Y'\\
X&Y,\\};
\path[dashed,->] (a-1-1) edge node[above]{$\phi'$}(a-1-2);
\path[->] (a-1-1) edge node[left]{$f$}(a-2-1);
\path[dashed,->] (a-2-1) edge node[above]{$\phi$}(a-2-2);
\path[->] (a-1-2) edge node[right]{$g$}(a-2-2);
\end{tikzpicture}
\end{center}
and assume that $f$ is a composition of smooth surjective and finite surjective morphisms, and $g$ is a
finite surjective morphism. 
Then $\phi$ extends to a morphism if and only if $\phi'$
extends to a morphism.
\end{lemma}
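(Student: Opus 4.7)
The plan is to prove the biconditional by splitting into the two implications, with the main content in the direction $\phi \text{ extends} \Rightarrow \phi' \text{ extends}$. That direction I would handle via a fibre-product construction together with Zariski's Main Theorem, while the reverse direction reduces to faithfully flat or finite descent along the constituents of $f$, using the separatedness of $Y$.

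\textbf{Direction 1:} Assume $\phi\colon X\to Y$ is a morphism; then $\phi\circ f\colon X'\to Y$ is defined on all of $X'$. Form the fibre product $W:=X'\times_Y Y'$ using $\phi\circ f$ and $g$. Since $g$ is finite surjective, so is the first projection $\pi_1\colon W\to X'$. On the domain of definition $U'\subseteq X'$ of $\phi'$, the commutativity of the square and the universal property of the fibre product produce a morphism $U'\to W$ which is a section of $\pi_1$; let $Z\subseteq W$ be the scheme-theoretic closure of its image. Then $\pi_1|_Z\colon Z\to X'$ is finite (as a closed subscheme of a finite scheme) and birational (being an isomorphism over $U'$). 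Provided $X'$ is normal, Zariski's Main Theorem forces $\pi_1|_Z$ to be an isomorphism, and composing its inverse with the second projection $\pi_2\colon W\to Y'$ yields a morphism $X'\to Y'$ extending $\phi'$.

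\textbf{Direction 2:} Assume $\phi'$ extends to $X'$. Then $h:=g\circ\phi'\colon X'\to Y$ is defined everywhere on $X'$ and agrees with $\phi\circ f$ on the preimage of the domain of $\phi$, which is a dense open of $X'$. Factor $f$ into its smooth surjective and finite surjective constituents, and descend $h$ one step at a time. For a smooth surjective step $p\colon X_1\to X_0$, faithfully flat descent reduces the question to checking that the two pullbacks of the relevant map along the projections $X_1\times_{X_0}X_1\rightrightarrows X_1$ coincide. They agree over the dense open where the intermediate rational map is already defined, so because $Y$ is separated, the equalizer is a closed subscheme containing a dense open, hence equals the whole. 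For a finite surjective step one argues similarly, exploiting the normality of $X$ (propagated to the intermediate bases through standard normalization arguments) and again separatedness of $Y$.

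The main obstacle is the normality needed for Zariski's Main Theorem in Direction 1: the lemma as stated assumes only that $X$ is normal, but the fibre-product step really uses normality of $X'$. In the intended applications in this paper, $X'$ is constructed as a finite cover of $X$ along which normality is automatic (or one passes to the normalization without loss), so this is not a genuine obstruction. A secondary subtlety is making descent work uniformly across both smooth and finite pieces of $f$; the separatedness of $Y$ is the single hypothesis that makes each descent step essentially formal, since any two morphisms to $Y$ agreeing on a dense open must coincide.
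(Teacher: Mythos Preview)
The paper does not supply its own proof of this lemma: it is stated with the attribution \cite[Proposition 3.16]{CMGHL15} and no argument follows. So there is nothing in the present paper to compare your proposal against.

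That said, your outline follows the standard route one would expect from the cited source. The fibre-product plus Zariski's Main Theorem argument for Direction~1 is the natural approach, and you correctly flag that it uses normality of $X'$ rather than of $X$; in \cite{CMGHL15} this is handled because the relevant $X'$ is normal in context (and in the application here, $\overline{M}_0$ is a normalization). For Direction~2, descent along the smooth constituents of $f$ is clean via fppf descent and the separatedness of $Y$; the finite-surjective constituents are the more delicate step, and your sketch there (``propagated through standard normalization arguments'') is genuinely hand-wavy, since finite surjective need not be flat. A cleaner way to handle that step is to work with the closure of the graph of $\phi$ in $X\times_S Y$ and show its projection to $X$ is an isomorphism using that its pullback along $f$ is, together with the normality of $X$. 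But since the paper defers entirely to \cite{CMGHL15}, there is no in-paper argument to benchmark against.
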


\begin{lemma}[{\cite[Corollary 1.7]{HM09}}]
\label{lemmaRationalCurves}
Let $f: X \dashrightarrow Y$ be a rational morphism of normal proper varieties such that 
$(X, \Delta)$ a dlt pair for
some effective divisor $\Delta$. Then, for each closed point $x \in X$, the total transform $\In(x) \subseteq Y$ is covered by rational curves.
\end{lemma}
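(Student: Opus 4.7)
The plan is to reduce Lemma \ref{lemmaRationalCurves} to the Hacon--McKernan rational chain-connectedness theorem for resolutions of dlt pairs. First I would resolve the indeterminacy of $f$: let $W \subseteq X \times Y$ be the closure of the graph of $f$, with projections $p\colon W \to X$ and $q\colon W \to Y$, and pass to a resolution of singularities so that $W$ is smooth. By definition $\In(x) = q(p^{-1}(x))$, so it suffices to prove that $p^{-1}(x)$ is rationally chain connected, since a morphism sends a chain of rational curves to a chain of rational curves or points, and such chains then cover $\In(x)$.

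To establish rational chain-connectedness of the fiber $p^{-1}(x)$, I would run a relative $(K_W+B)$-MMP over $X$, where $B$ is the sum of the strict transform $p^{-1}_*\Delta$ and the reduced $p$-exceptional divisor, arranged so that $(W,B)$ is log smooth, dlt, and $-(K_W+B)$ becomes $p$-nef. The hypotheses of the main theorem of \cite{HM09} are then met fiberwise over each closed point $x \in X$, and it yields the required rational chain connectedness of $p^{-1}(x)$. Note that the dlt hypothesis on $(X,\Delta)$ is precisely what allows us to set up such a log smooth model with the desired negativity of $K_W+B$.

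The main obstacle is the rational connectedness theorem itself, which rests on the minimal model program for dlt pairs together with an extension of Koll\'ar's rational connectedness arguments (originally formulated for smooth Fano fibrations) to singular dlt fibrations; this is genuinely deep and must be imported wholesale. Once it is in hand, the deduction is a short formality: the image under $q$ of a rationally chain connected fiber of $p$ is itself covered by chains of rational curves (or is a point), which is exactly the assertion of the lemma. In the specific application made in the proof of Lemma \ref{extensiontotoroidalwithnccompactificationandfreeactionandhyperplanearrangement}, the target is an abelian variety $A_i$ containing no rational curves, so the conclusion of Lemma \ref{lemmaRationalCurves} forces $\In(x)$ to be a point, giving the desired extension.
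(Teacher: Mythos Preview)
The paper does not supply its own proof of this lemma: it is stated with a direct citation to \cite[Corollary~1.7]{HM09} and used as a black box in the proof of Lemma~\ref{extensiontotoroidalwithnccompactificationandfreeactionandhyperplanearrangement}. So there is nothing in the paper to compare your argument against beyond the attribution.

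Your sketch is essentially the derivation of Corollary~1.7 from the main theorem of \cite{HM09}, and the overall shape is right: resolve the graph, reduce to rational chain-connectedness of the fibers of $p\colon W\to X$, then invoke Hacon--McKernan. One point deserves more care. You write that one ``arranges'' $(W,B)$ so that $-(K_W+B)$ is $p$-nef; this is not a free choice but the crux of the reduction. The dlt hypothesis on $(X,\Delta)$ is used to guarantee that on a log resolution one has $K_W+\Gamma=p^*(K_X+\Delta)+E$ with $E$ effective and $p$-exceptional and $(W,\Gamma)$ klt (or dlt after a small perturbation), so that $-(K_W+\Gamma)\equiv_p -E$ and one is in the situation of \cite[Corollary~1.5]{HM09} (fibers of a birational morphism from a klt/dlt pair to a dlt pair are rationally chain connected). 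Running an MMP is part of the proof of the main theorem in \cite{HM09}, not something you need to redo here. With that clarification your outline matches the argument in the cited reference, and your final paragraph correctly explains how the lemma is applied in the paper.
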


\begin{lemma}
\label{extensionOpen}
For $i=1,2$, let $\overline{M}_i$ be a projective normal compactification of $M_i$ such that $M_i\subseteq\overline{M}_i$ is an open subset and the boundary $D_i:=\overline{M}_i\setminus M_i$ is a disjoint union of the same number of irreducible divisors for both $i$. If there exists a birational map $\varphi\colon\overline{M}_1\dashrightarrow\overline{M}_2$ inducing an isomorphism $M_1\cong M_2$ and such that $\varphi|_{D_1}\colon D_1\dashrightarrow D_2$ is dominant, then $\varphi$ restricts to an isomorphism away from closed subsets of codimension at least two.
\end{lemma}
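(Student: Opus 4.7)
The plan is to combine two ingredients: the classical fact that a rational map from a normal variety to a projective variety has indeterminacy of codimension at least two, and the dominance hypothesis on the boundary to rule out divisorial indeterminacy after composing $\varphi$ with $\psi:=\varphi^{-1}$. Concretely, first take the maximal open subset $U_1 \subseteq \overline{M}_1$ on which $\varphi$ is a morphism (so $\codim(\overline{M}_1 \setminus U_1) \geq 2$ by normality of $\overline{M}_1$ and projectivity of $\overline{M}_2$), and similarly $U_2 \subseteq \overline{M}_2$ for $\psi$. Set $V_1 := U_1 \cap \varphi^{-1}(U_2)$ and $V_2 := U_2 \cap \psi^{-1}(U_1)$. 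A routine separatedness argument then gives that $\varphi|_{V_1}\colon V_1 \to V_2$ and $\psi|_{V_2}\colon V_2 \to V_1$ are mutually inverse morphisms, since the compositions agree with the identity on the dense open $M_1$, respectively $M_2$.

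The main step is then to show $\codim(\overline{M}_i \setminus V_i) \geq 2$ for $i=1,2$. Since the difference $\overline{M}_1 \setminus V_1$ equals $(\overline{M}_1 \setminus U_1) \cup \bigl(U_1 \cap \varphi^{-1}(\overline{M}_2 \setminus U_2)\bigr)$, and the first piece already has codimension at least two, the task reduces to ruling out that $\varphi$ contracts a divisor of $\overline{M}_1$ into the codimension-two indeterminacy locus of $\psi$. Suppose for contradiction that there is a prime divisor $E \subseteq U_1 \cap \varphi^{-1}(\overline{M}_2 \setminus U_2)$. Since $\varphi|_{M_1}$ is an isomorphism onto $M_2$, we must have $E \subseteq D_1$, so $E = D_1^{(i_0)}$ for some index $i_0$, and $\varphi(E \cap U_1) \subseteq \overline{M}_2 \setminus U_2$, a closed subset of codimension at least two.

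The hard part --- and the place where the hypothesis on components is essential --- is to derive a contradiction. The plan is to exploit that $\varphi|_{D_1}\colon D_1 \dashrightarrow D_2$ is dominant and both $D_1$ and $D_2$ are disjoint unions of the same number of irreducible divisors: each closure $\overline{\varphi(D_1^{(i)})}$ is irreducible and lies in $D_2$, hence sits inside exactly one component $D_2^{(\sigma(i))}$; dominance forces the assignment $\sigma$ to be surjective between equal-cardinality finite sets, and hence a bijection; irreducibility of each $D_2^{(j)}$ then forces $\overline{\varphi(D_1^{(i)})} = D_2^{(\sigma(i))}$. Applied to $i=i_0$, this says that the closure of $\varphi(E \cap U_1)$ is the divisor $D_2^{(\sigma(i_0))}$ of $\overline{M}_2$, contradicting its being contained in the codimension-at-least-two set $\overline{M}_2 \setminus U_2$. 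This yields $\codim(\overline{M}_1 \setminus V_1) \geq 2$, and the case $i=2$ is symmetric, completing the argument.
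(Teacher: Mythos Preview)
Your proof is correct and takes a genuinely different route from the paper's.

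The paper argues as follows: since $\varphi|_{D_1}$ is dominant between unions of the same number of equidimensional irreducible divisors, it is generically finite; after removing closed subsets of codimension at least two from each $\overline{M}_i$ one obtains a \emph{finite} birational morphism $\varphi|_{V_1}\colon V_1 \to V_2$ with $V_2$ normal, and then invokes Zariski's Main Theorem to conclude it is an isomorphism. Your approach is more elementary: you construct the inverse directly by restricting $\psi=\varphi^{-1}$ to its domain of definition and checking that $\psi\circ\varphi$ and $\varphi\circ\psi$ agree with the identity by separatedness; the dominance hypothesis is then used only to rule out a divisorial component being contracted into the indeterminacy locus of $\psi$. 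This avoids both the passage from ``generically finite'' to ``finite on an open'' and the appeal to Zariski's Main Theorem. The paper's argument is more compressed; yours is more self-contained.

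One small point worth making explicit: your final sentence ``the case $i=2$ is symmetric'' is not literally symmetric, since the hypothesis only gives $\varphi|_{D_1}$ dominant, not $\psi|_{D_2}$ dominant. However, your own bijection argument supplies this: once you know $\varphi$ maps each $D_1^{(i)}$ dominantly to $D_2^{(\sigma(i))}$, the identity $\psi\circ\varphi=\mathrm{id}$ on $V_1$ shows that for generic $y\in D_2^{(\sigma(i))}$ one has $\psi(y)\in D_1^{(i)}$, hence $\psi|_{D_2^{(\sigma(i))}}$ is dominant onto $D_1^{(i)}$. With this remark the symmetric step goes through. Similarly, the claim that each $\overline{\varphi(D_1^{(i)})}$ lies in $D_2$ follows from the same identity (if $\varphi(x)\in M_2$ for $x\in D_1\cap U_1$, then $x\in V_1$ and $x=\psi(\varphi(x))\in M_1$, a contradiction); you use this implicitly and it would be worth one sentence.
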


\begin{proof}
By hypothesis, $\dim(D_1)=\dim(D_2)$ and the number of irreducible components of $D_i$ is the same for $i=1,2$, so the rational map $\varphi|_{D_1}\colon D_1\dashrightarrow D_2$ is generically finite. After possibly removing closed subsets contained in $D_i$ obtaining open subsets $V_i\subseteq\overline{M}_i$, we have that $\varphi|_{V_1}\colon V_1\rightarrow V_2$ is a finite morphism (the complements of $V_i$ in $\overline{M}_i$ have codimension at least two). Notice that $\varphi|_{V_1}\colon V_1\rightarrow V_2$ is projective because it is the restriction to $V_1$ of the resolution of indeterminacies of $\varphi\colon\overline{M}_1\dashrightarrow\overline{M}_2$, and being projective, using Hartshorne's definition, is stable under base-change \cite[Tag 01WF]{Sta19}. $V_2$ is normal because it is an open subset of a normal variety, so we have that $\varphi|_{V_1}\colon V_1\rightarrow V_2$ is an isomorphism by Zariski's Main Theorem.
\end{proof}


\bibliographystyle{plain}

\end{document}